\documentclass[a4paper, reqno]{amsart}
\usepackage{amsmath, amsthm, amsfonts, amssymb}
\usepackage[applemac]{inputenc}
\usepackage[all]{xy}
\usepackage[english]{babel}
\usepackage[lite, initials]{amsrefs}
%
\newcommand{\forevery}[2]{\null\hfilneg\llap{$\forall#1\quad\qquad$}\hfil#2}
\let\de=\partial
\newcommand{\C}{\mathbb{C}}
\newcommand{\R}{\mathbb{R}}

\newcommand{\D}{\mathbb{D}}

\newcommand{\hatC}{\widehat{\C}}
\newcommand{\Hol}{\mathop{\mathrm{Hol}}\nolimits}
\newcommand{\Aut}{\mathop{\mathrm{Aut}}\nolimits}

\renewcommand{\Re}{\mathop{\mathrm{Re}}\nolimits}
\renewcommand{\Im}{\mathop{\mathrm{Im}}\nolimits}
\newcommand{\Klim}{\mathop{\hbox{$K$-$\lim$}}\limits}
\newcommand{\cancel}[2]{\ooalign{$\hfil#1/\hfil$\crcr$#1#2$}}
\newcommand{\void}{\mathord{\mathpalette\cancel{\mathrel{\hbox{\kern0pt\raise0.8pt\hbox
	{$\scriptstyle\bigcirc$}}}}}}
\newtheorem{Theorem}{Theorem}[section]
\newtheorem{Corollary}[Theorem]{Corollary}
\newtheorem{Proposition}[Theorem]{Proposition}
\newtheorem{Lemma}[Theorem]{Lemma}
\theoremstyle{definition}
\newtheorem{Definition}[Theorem]{Definition}
\theoremstyle{remark}
\newtheorem{Remark}[Theorem]{Remark}
\theoremstyle{remark}

\title{Multipoint Julia theorems}
\author[Marco Abate]{Marco Abate$\dagger$}
\address{Marco Abate\\ Dipartimento di Matematica\\ Universit\`a di Pisa\\ Largo Pontecorvo 5, I-56127 Pisa\\ Italy.} \email{marco.abate@unipi.it}

\thanks{$\dagger$ Partially supported by 2017 PRIN grant ``Real and Complex Manifolds: Topology, Geometry and Holomorphic Dynamics", Ministry of University and Research, Italy, and by 2020 PRA grant ``Sistemi dinamici in logica, geometria, fisica matematica e scienza delle costruzioni", University of Pisa, Italy.}
\thanks{2020 Mathematics Subject Classification: 30C80 (primary);  30E25, 30F45, 30J10 (secondary).}
\thanks{\textit{Keywords:} Julia lemma; hyperbolic difference quotient; boundary dilation coefficient; angular derivative; Cowen-Pommerenke estimate}

\begin{document}

\dedicatory{To Edoardo Vesentini}

\begin{abstract}
Following ideas introduced by Beardon-Minda and by Baribeau-Rivard-Wegert in the context of the Schwarz-Pick lemma, we use the iterated hyperbolic difference quotients to prove a multipoint Julia lemma. As applications, we give a sharp estimate from below of the angular derivative at a boundary point, generalizing results due to Osserman, Mercer and others; and we prove a generalization to multiple fixed points of an interesting estimate due to Cowen and Pommerenke. These applications show that iterated hyperbolic difference quotients and multipoint Julia lemmas can be useful tools for exploring in a systematic way the influence of higher order derivatives on the boundary behaviour of holomorphic self-maps of the unit disk.
\end{abstract}

\maketitle

\section{Introduction}
\label{sec:0}

The classical Schwarz-Pick lemma \cites{Schwarz, Picka, Pickb, Caratheodory} says that every holomorphic self-map of the unit disk $\D\subset\C$ is a weak contraction for the Poincar\'e distance~$\omega$. More precisely, for every $f\in\Hol(\D,\D)$ and $z$, $w\in\D$ we have
\[
\omega\bigl(f(z),f(w)\bigr)\le\omega(z,w)\;,
\]
with equality for some $z_0\ne w_0$ if and only if there is equality everywhere if and only if $f$ is an automorphism of~$\D$. In particular, if $f\notin\Aut(\D)$ then we have
\begin{equation}
\omega\bigl(f(z),f(w)\bigr)<\omega(z,w)
\label{eq:0.SP}
\end{equation}
for all $z\ne w$. 

In the century following the appearance of this result many improvements of \eqref{eq:0.SP} for non automorphisms have appeared; see, e.g., \cites{Dieudonne, Rogosinski, Goluzin, Mercer1997, Mercer1999, Kaptanoglu, Beardon, BeardonCarne}. Surprisingly, in 2004 Beardon and Minda \cite{BeardonMinda} found an elegant unified way to recover all these results, and more. 

Their idea is based on the \emph{hyperbolic difference quotient}~$f^*\colon\D\times\D\to\C$ associated to a holomorphic self-map $f\in\Hol(\D,\D)$, which is defined as follows:
\[
f^*(z,w)=
\begin{cases}
\frac{f(z)-f(w)}{1-\overline{f(w)}f(z)}\bigg/\frac{z-w}{1-\overline{w}z}&\mathrm{if}\ z\ne w\;;\\
f'(z)\frac{1-|z|^2}{1-|f(z)|^2}&\mathrm{if}\ z=w\;.
\end{cases}
\]
It is clear that for every $w\in\D$ the map $z\mapsto f^*(z,w)$ is holomorphic. Beardon and Minda observed that \eqref{eq:0.SP} is equivalent to saying that, if $f$ is not an automorphism, then $f^*(\cdot,w)$ is a holomorphic self-map of~$\D$ for every~$w\in\D$. But then one can apply the Schwarz-Pick lemma to~$f^*(\cdot,w)$, obtaining the following 3-point Schwarz-Pick lemma:

\begin{Theorem}[Beardon-Minda, 2004]
\label{th:0.tpSP}
Let $f\in\Hol(\D,\D)\setminus\Aut(\D)$. Then
\[
\omega\bigl(f^*(z,v),f^*(w,v)\bigr)\le \omega(z,w)
\]
for all $z$, $v$, $w\in\D$. Furthermore equality holds for some $z_0\ne w_0$ and $v_0$ if and only if it holds everywhere if and only if $f$ is a Blaschke product of degree~2.
\end{Theorem}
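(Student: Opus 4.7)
The plan is to deduce the theorem from the classical Schwarz-Pick lemma in exactly the way suggested in the introduction. Fix $v\in\D$ and set $g_v(z):=f^*(z,v)$.

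First I would check that $g_v$ is a well-defined holomorphic self-map of $\D$. Holomorphy on $\D$ follows by rewriting
\[
f^*(z,v)=\frac{f(z)-f(v)}{z-v}\cdot\frac{1-\overline{v}z}{1-\overline{f(v)}f(z)},
\]
which shows that the apparent singularity at $z=v$ is removable (and matches the diagonal value $f'(v)(1-|v|^2)/(1-|f(v)|^2)$). The inclusion $g_v(\D)\subset\D$ is precisely the strict Schwarz-Pick inequality \eqref{eq:0.SP} for $f\notin\Aut(\D)$: on $\D\setminus\{v\}$ it gives $|g_v(z)|<1$ directly, and at $z=v$ it follows either from the classical strict Schwarz-Pick inequality at a point or from the maximum modulus principle applied to $g_v$.

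Second, the inequality is obtained by applying the classical Schwarz-Pick lemma to the self-map $g_v$:
\[
\omega\bigl(f^*(z,v),f^*(w,v)\bigr)=\omega\bigl(g_v(z),g_v(w)\bigr)\le\omega(z,w)
\]
for all $z,w\in\D$.

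Third, for the equality discussion, write $\psi_a(z):=(z-a)/(1-\overline{a}z)$ for the standard disk involutions. The definition of $f^*$ rearranges to $\psi_{f(v)}\circ f=g_v\cdot\psi_v$. By the equality case of the classical Schwarz-Pick lemma, equality in the displayed inequality at some triple $(z_0,w_0,v_0)$ with $z_0\ne w_0$ holds if and only if $g_{v_0}\in\Aut(\D)$, and then it holds for every $z,w\in\D$. If $g_{v_0}\in\Aut(\D)$, then $\psi_{f(v_0)}\circ f=g_{v_0}\cdot\psi_{v_0}$ is a product of two disk automorphisms, hence a Blaschke product of degree $2$; composing with the automorphism $\psi_{f(v_0)}^{-1}$ shows that $f$ itself is a Blaschke product of degree $2$. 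Conversely, if $f$ is a degree-$2$ Blaschke product, then for every $v\in\D$ the map $\psi_{f(v)}\circ f$ is again a degree-$2$ Blaschke product, vanishing at $v$, so it factors as $c\,\psi_v\psi_b$ with $c\in\partial\D$ and $b\in\D$; dividing by $\psi_v$ yields $g_v=c\,\psi_b\in\Aut(\D)$, and hence equality holds for all $z,w,v$. This closes the chain of equivalences.

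The main delicate step is the last one: identifying ``$g_v\in\Aut(\D)$'' with ``$f$ is a Blaschke product of degree $2$''. Here one has to use the factorisation of $\psi_{f(v)}\circ f$ together with the fact that the degree of a finite Blaschke product is preserved under pre- and post-composition with automorphisms, and that a degree-$2$ Blaschke product vanishing at $v$ necessarily factors with one of its two zeros equal to $v$.
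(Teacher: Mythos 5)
Your proposal is correct and follows exactly the route the paper indicates: the strict Schwarz--Pick inequality shows $f^*(\cdot,v)$ is a holomorphic self-map of $\D$ when $f\notin\Aut(\D)$, the classical Schwarz--Pick lemma applied to it gives the inequality, and the equality case is settled via the identity $\psi_{f(v)}\circ f=f^*(\cdot,v)\cdot\psi_v$, which is the same factorisation the paper uses in Proposition~\ref{th:1.Blhdq} to identify ``$f^*(\cdot,v)\in\Aut(\D)$'' with ``$f$ is a Blaschke product of degree~$2$''.
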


Here a \emph{Blaschke product} of \emph{degree} $d\ge 1$ is a holomorphic self-map of~$\D$ of the form
\[
B(z)=e^{i\theta}\prod_{j=1}^d \frac{z-a_j}{1-\overline{a_j}z}\;,
\]
where $\theta\in\R$ and $a_1,\ldots,a_d\in\D$. In particular, the Blaschke products of degree~1 are exactly the automorphisms of~$\D$.

As mentioned above, Beardon and Minda showed how the apparently innocuous Theorem~\ref{th:0.tpSP} can be used to recover many inequalities improving the original Schwarz-Pick lemma; we refer to their beautiful paper \cite{BeardonMinda} for details.

A consequence of Theorem~\ref{th:0.tpSP} is that if $f$ is not a Blaschke product of degree~2 then $f^*(\cdot,w)$ is not an automorphism of~$\D$; therefore its hyperbolic difference quotient again is a holomorphic self-map of~$\D$, and hence we can apply the classical Schwarz-Pick lemma to get a 4-point Schwarz-Pick lemma --- and then, iterating the procedure, a $n$-point Schwarz-Pick lemma for any $n\ge 2$. 

This idea has been explored by Baribeau, Rivard and Wegert \cite{BaribeauRivardWegert} in the context of the Nevanlinna-Pick interpolation problem, and by Cho, Kim and Sugawa \cite{ChoKimSugawa} in more generality; see also \cites{Rivard1, Rivard2}. To state their results, we need some notations. Given $f\in\Hol(\D,\D)$ and $z$,~$w_1,\ldots,w_k\in\D$ we define the \emph{iterated hyperbolic difference quotient}~$\Delta_{w_k,\ldots,w_1}f(z)$ by setting $\Delta_{w_1}f(z)=f^*(z,w_1)$ and $\Delta_{w_k,\ldots,w_1}f(z)=\Delta_{w_k}(\Delta_{w_{k-1},\ldots,w_1}f)(z)$. Then we have a multi-point Schwarz-Pick lemma:

\begin{Theorem}[Baribeau-Rivard-Wegert, 2009]
\label{th:0.mpSP}
Given $k\ge 1$, take $f\in\Hol(\D,\D)$ not a Blaschke product of degree at most~$k$. Then
\[
\omega\bigl(\Delta_{w_k,\ldots,w_1}f(z),\Delta_{w_k,\ldots,w_1}f(w))\bigr)\le \omega(z,w)
\]
for all $z$, $w$, $w_1,\ldots,w_k\in\D$. Furthermore equality holds for some $z_0\ne w_0$ and $w_1,\ldots,w_k$ if and only if it holds for all $z$, $w$, $w_1,\ldots,w_k\in\D$ if and only if $f$ is a Blaschke product of degree~$k+1$.
\end{Theorem}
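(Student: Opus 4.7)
The plan is to argue by induction on $k\ge 1$, using the Beardon-Minda three-point Schwarz-Pick lemma (Theorem~\ref{th:0.tpSP}) both as the base case and as the engine of the inductive step. The base case $k=1$ is literally Theorem~\ref{th:0.tpSP}, since $\Delta_{w_1}f=f^*(\cdot,w_1)$ and ``$f$ not Blaschke of degree $\le 1$'' is just ``$f$ not an automorphism''.

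For the inductive step, I assume the statement for $k-1\ge 1$, and let $f$ not be Blaschke of degree at most~$k$. Setting $g=\Delta_{w_{k-1},\ldots,w_1}f$, since $f$ is a fortiori not Blaschke of degree at most~$k-1$, the inductive hypothesis guarantees $g\in\Hol(\D,\D)$. If I can show in addition that $g$ is not an automorphism of~$\D$, then applying Theorem~\ref{th:0.tpSP} to $g$ at the base point $v=w_k$ yields
\[
\omega\bigl(g^*(z,w_k),g^*(w,w_k)\bigr)\le\omega(z,w),
\]
which is precisely the desired inequality because $g^*(\cdot,w_k)=\Delta_{w_k,\ldots,w_1}f$. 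Beardon-Minda further tells me that equality occurs iff $g$ is a Blaschke product of degree~$2$, so I will also need to translate this statement about $g$ into ``$f$ is a Blaschke product of degree $k+1$''.

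Both remaining issues --- that $g$ is not an automorphism, and the identification of the extremal case --- reduce to the same key lemma, which I expect to be the main technical point: for every $d\ge 1$ and $w\in\D$, $f^*(\cdot,w)$ is a Blaschke product of degree exactly $d-1$ if and only if $f$ is a Blaschke product of degree exactly~$d$. The ``only if'' direction follows by writing the defining identity as
\[
\frac{f(z)-f(w)}{1-\overline{f(w)}f(z)}=\frac{z-w}{1-\overline{w}z}\,f^*(z,w),
\]
observing that the left-hand side is the post-composition of~$f$ with the disk automorphism $\zeta\mapsto(\zeta-f(w))/(1-\overline{f(w)}\zeta)$, hence a Blaschke product of the same degree as~$f$ with a zero at $z=w$, so factoring out the Blaschke factor $(z-w)/(1-\overline{w}z)$ drops the degree by one. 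The converse reads the same identity in reverse: if $f^*(\cdot,w)$ is Blaschke of degree $d-1$, the right-hand side is Blaschke of degree~$d$, and pre-composing with the inverse automorphism shows that $f$ itself is Blaschke of degree~$d$. Iterating the lemma $k-1$ times gives that $g$ is Blaschke of degree $m$ iff $f$ is Blaschke of degree $m+k-1$; hence the hypothesis ``$f$ not Blaschke of degree $\le k$'' forces $g$ not to be Blaschke of degree~$1$, i.e.\ not an automorphism, which closes the inductive step, while the extremal case ``$g$ Blaschke of degree~$2$'' corresponds exactly to ``$f$ Blaschke of degree $k+1$'', closing the equality statement. Finally, the ``some point $\Longrightarrow$ everywhere'' half of the equality statement is immediate once $f$ is known to be Blaschke of degree $k+1$, since then the same argument applied to arbitrary $w_1',\ldots,w_k'$ produces another Blaschke product of degree~$2$ at the top level, which saturates Beardon-Minda at every pair of points.
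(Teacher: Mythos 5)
Your proof is correct. The paper itself quotes this theorem from Baribeau-Rivard-Wegert without reproving it, but your argument is exactly the scheme the paper uses for the parallel multipoint Julia lemma (Theorem~\ref{th:1.mpJulia}): your key lemma is precisely Proposition~\ref{th:1.Blhdq}, established there via the same identity $\gamma_w(z)\,f^*(z,w)=\gamma_{f(w)}\bigl(f(z)\bigr)$, and the induction reducing everything to the one-step Beardon-Minda statement, together with the degree bookkeeping for the equality case, matches the paper's treatment.
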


Another way of expressing the Schwarz-Pick lemma consists in saying that holomorphic self-maps of the unit disk send disks for the Poincar\'e distance into disks for the Poincar\'e distance. Julia \cite{Julia} in 1920 noticed that by moving the centers of these disks toward the boundary one can get a boundary version of the Schwarz-Pick lemma, nowadays known as Julia lemma:

\begin{Theorem}[Julia, 1920]
\label{th:0.cinque} 
Let $f\in\Hol(\D,\D)$ and $\sigma\in\de\D$ be such that 
\[
\beta_f(\sigma):=\liminf_{z\to\sigma}\frac{1-|f(z)|}{1-|z|}=\alpha<\infty\;.
\]
Then there exists a unique $\tau\in\de\D$ such that 
\begin{equation}
\frac{|\tau-f(z)|^2}{1-|f(z)|^2}\le\beta_f(\sigma)\frac{|\sigma-z|^2}{1-|z|^2}
\label{eq:0.J}
\end{equation}
for every $z\in\D$.
Moreover, equality in \eqref{eq:0.J} holds at one point if and only if it holds everywhere if and only if 
$f\in\Aut(\D)$.
\end{Theorem}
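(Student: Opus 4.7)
The plan is to obtain the Julia inequality as a boundary limit of the Schwarz-Pick inequality. First I pick a sequence $(z_n)\subset\D$ with $z_n\to\sigma$ realising the $\liminf$, so $(1-|f(z_n)|)/(1-|z_n|)\to\alpha$; finiteness of $\alpha$ forces $|f(z_n)|\to 1$, and by compactness (after extraction) I may assume $f(z_n)\to\tau$ for some $\tau\in\de\D$. Rearranging the Schwarz-Pick lemma applied to the pair $(z,z_n)$ yields
\[
\frac{|1-\overline{f(z_n)}f(z)|^2}{1-|f(z)|^2}\le \frac{|1-\overline{z_n}z|^2}{1-|z|^2}\cdot \frac{1-|f(z_n)|^2}{1-|z_n|^2}.
\]
Letting $n\to\infty$ and using $|1-\overline{\sigma}z|=|\sigma-z|$, $|1-\overline{\tau}f(z)|=|\tau-f(z)|$, together with $(1-|f(z_n)|^2)/(1-|z_n|^2)\to\alpha$, produces \eqref{eq:0.J}.

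For uniqueness of $\tau$ I would specialise \eqref{eq:0.J} to the radial approach $z=r\sigma$, $r\to 1^-$: the right-hand side equals $\alpha(1-r)/(1+r)\to 0$, so $|\tau-f(r\sigma)|\to 0$. Hence $\tau$ is characterised as the radial limit of $f$ at $\sigma$, and any other $\tau'\in\de\D$ satisfying \eqref{eq:0.J} would be forced to agree with it.

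For the equality clause the natural strategy is to move to the right half-plane $\{s:\Re s>0\}$. Using the Cayley-type maps $\phi(z)=(\sigma+z)/(\sigma-z)$ and $\psi(w)=(\tau+w)/(\tau-w)$, a direct computation gives $|\sigma-z|^2/(1-|z|^2)=1/\Re\phi(z)$, and analogously on the image side; so Julia's inequality becomes $\Re g(s)\ge\Re s/\alpha$ for $g=\psi\circ f\circ\phi^{-1}$, a holomorphic self-map of the right half-plane. The \emph{if} direction follows because every $f\in\Aut(\D)$ sending $\sigma$ to $\tau$ translates into an affine map $g(s)=\lambda s+ic$ with $\lambda=1/\alpha$ and $c\in\R$, producing equality in \eqref{eq:0.J} at every point. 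For the converse, equality at some $z_0$ makes $h(s):=g(s)-s/\alpha$ satisfy $\Re h(s_0)=0$ while $\Re h\ge 0$ on the half-plane; the minimum principle then forces $\Re h\equiv 0$, so $h$ is purely imaginary and, being holomorphic, constant, giving $g(s)=s/\alpha+ic\in\Aut(\{\Re s>0\})$ and $f\in\Aut(\D)$. This rigidity step is the main obstacle: the pointwise Schwarz-Pick inequalities are all strict when $f\notin\Aut(\D)$, yet the quantitative defect vanishes in the boundary limit, so detecting automorphisms requires the extra structure furnished by the half-plane model.
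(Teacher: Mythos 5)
Your argument is correct, but note that the paper does not prove this statement at all: Theorem~\ref{th:0.cinque} (restated as Theorem~\ref{th:I.2.cinque}) is quoted as a classical preliminary with a reference to Julia's paper and to \cite{Abate}, so there is no internal proof to compare against. What you give is the standard textbook proof: extract a sequence $z_n\to\sigma$ realising the $\liminf$, rewrite Schwarz--Pick for the pair $(z,z_n)$ in the form involving $1-|{\cdot}|^2$, and pass to the limit; then identify $\tau$ as the radial limit via the shrinking horocycles; then transfer to the right half-plane, where the equality clause reduces to the minimum principle for the nonnegative harmonic function $\Re\bigl(g(s)-s/\alpha\bigr)$. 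All of these steps are sound. Two small points worth making explicit: (i) the division by $\alpha$ in the half-plane model presupposes $\alpha>0$, which does follow from the inequality you have just proved (if $\alpha=0$ then \eqref{eq:0.J} forces $f\equiv\tau\in\de\D$, impossible for $f\in\Hol(\D,\D)$), but should be said; (ii) in the \emph{if} direction the identity $\lambda=1/\alpha$ for an automorphism deserves a line of justification --- alternatively, and more cheaply, for $f\in\Aut(\D)$ the Schwarz--Pick inequality you start from is an identity for every $n$ and every $z$, so equality in \eqref{eq:0.J} everywhere follows directly by passing to the limit, with no need to compute $\lambda$.
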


The number $\beta_f(\sigma)\in(0,+\infty]$ is the \emph{boundary dilation coefficient} of~$f$ at~$\sigma$, and it is the absolute value of the \emph{angular derivative}~$f'(\sigma)$, the non-tangential limit  of~$f'$ at~$\sigma$, which is known to exist thanks to the Julia-Wolff-Carath\'eodory theorem. It is well-known that $\beta_f(\sigma)>0$ always; furthermore, if $\beta_f(\sigma)<+\infty$ then the point~$\tau$ appearing in the statement of Julia lemma is the non-tangential limit of~$f$ at~$\sigma$, that we will denote by~$f(\sigma)$. 

The geometrical meaning of \eqref{eq:0.J} is that if $\beta_f(\sigma)<+\infty$ then $f$ sends horocycles centered at~$\sigma$ in horocycles centered at~$f(\sigma)$, where 
a \emph{horocycle}~$E(\sigma,R)$ of \emph{center}~$\sigma\in\de\D$ and \emph{radius} $R>0$ is given by
\[
E(\sigma,R)=\left\{z\in\D\biggm|\frac{|\sigma-z|^2}{1-|z|^2}<R\right\}\;.
\]
Geometrically, $E(\sigma,R)$ is an Euclidean disk or radius $R/(R+1)$ internally tangent at~$\de\D$ in~$\sigma$.

The aim of this paper is to obtain a multipoint version of Julia lemma along the lines of Theorems~\ref{th:0.tpSP} and~\ref{th:0.mpSP}. The paper~\cite{BeardonMinda} contains a 3-point Julia lemma, but its statement does not involve the hyperbolic difference quotient, and it is in a slightly different spirit. Closer to our aims is \cite{Mercer2000}*{Proposition~4.1}; but its (Euclidean) statement is quite involved and not easy to use (see Remark~\ref{rem:1.Mercer}). 

Our idea then is to obtain a version of Theorem~\ref{th:0.cinque} involving the iterated hyperbolic difference quotients. The main difference between the Schwarz-Pick lemma and the Julia lemma is that the latter works only for maps with finite boundary dilation coefficient. So the main result allowing our approach to start is the following (see Proposition~\ref{th:1.beta}):

\begin{Proposition}
\label{th:0.main}
Let $f\in\Hol(\D,\D)$ and $\sigma\in\de\D$ be such that $\beta_f(\sigma)<+\infty$. Then
\[
\beta_{\Delta_wf}(\sigma)=\beta_f(\sigma)\frac{1-|f(w)|^2}{|f(\sigma)-f(w)|^2}-\frac{1-|w|^2}{|\sigma-w|^2}
\]
for all $w\in\D$. In particular $\beta_{\Delta_{w_0}f}(\sigma)<+\infty$ for some $w_0\in\D$ if and only if $\beta_{\Delta_{w}f}(\sigma)<+\infty$ for all $w\in\D$ if and only if $\beta_f(\sigma)<+\infty$. 
\end{Proposition}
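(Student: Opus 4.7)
The plan is to compute, under the assumption $\beta_f(\sigma)<+\infty$, the non-tangential limit $\Klim_{z\to\sigma}(1-|\Delta_w f(z)|)/(1-|z|)$ directly from the pseudo-hyperbolic form of $\Delta_w f$, and then to invoke the Julia--Wolff--Carath\'eodory theorem to identify this limit with $\beta_{\Delta_w f}(\sigma)$.

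Writing $p(a,b)=|(a-b)/(1-\bar b a)|$ for the pseudo-hyperbolic distance, the very definition of $\Delta_w f$ gives $|\Delta_w f(z)|^2=p(f(z),f(w))^2/p(z,w)^2$, and the identity $1-p(a,b)^2=(1-|a|^2)(1-|b|^2)/|1-\bar b a|^2$ yields
\[
1-|\Delta_w f(z)|^2=\frac{1}{p(z,w)^2}\left[\frac{(1-|f(z)|^2)(1-|f(w)|^2)}{|1-\overline{f(w)}f(z)|^2}-\frac{(1-|z|^2)(1-|w|^2)}{|1-\bar w z|^2}\right].
\]
Dividing by $1-|z|^2$, I would then let $z\to\sigma$ non-tangentially. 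The Julia--Wolff--Carath\'eodory theorem supplies $f(z)\to\tau:=f(\sigma)\in\de\D$ and $(1-|f(z)|^2)/(1-|z|^2)\to\beta_f(\sigma)$; each of the remaining factors is continuous up to $\sigma$ and evaluates via $|\sigma|=|\tau|=1$ (so that $|1-\bar w\sigma|=|\sigma-w|$ and $|1-\overline{f(w)}\tau|=|\tau-f(w)|$), while $p(z,w)\to 1$. Combining these yields
\[
\Klim_{z\to\sigma}\frac{1-|\Delta_w f(z)|^2}{1-|z|^2}=\beta_f(\sigma)\,\frac{1-|f(w)|^2}{|f(\sigma)-f(w)|^2}-\frac{1-|w|^2}{|\sigma-w|^2}.
\]

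Since $p(f(z),f(w))\to 1$ as $|f(z)|\to 1$, one also has $|\Delta_w f(z)|\to 1$ non-tangentially, so the factor $(1+|\Delta_w f(z)|)/(1+|z|)$ tends to $1$ and the same non-tangential limit persists for $(1-|\Delta_w f(z)|)/(1-|z|)$. In particular $\beta_{\Delta_w f}(\sigma)<+\infty$, and a further application of Julia--Wolff--Carath\'eodory, this time to $\Delta_w f$, upgrades the resulting inequality $\beta_{\Delta_w f}(\sigma)\le\mathrm{RHS}$ to the claimed equality.

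For the ``in particular'' assertion, the forward implication is immediate from the formula for $\beta_{\Delta_w f}(\sigma)$ just derived. For the converse I would argue by contraposition: if $\beta_f(\sigma)=+\infty$ then $(1-|f(z)|)/(1-|z|)$, and hence $(1-|f(z)|^2)/(1-|z|^2)$, tends to $+\infty$ as $z\to\sigma$ unrestrictedly, while the remaining pieces of the displayed identity stay bounded near $\sigma$, forcing $(1-|\Delta_w f(z)|^2)/(1-|z|^2)\to+\infty$ and thus $\beta_{\Delta_w f}(\sigma)=+\infty$ for every $w$. The delicate point, I expect, is the passage from the $(1-|g|^2)$-quotient to the $(1-|g|)$-quotient: it rests on the non-trivial observation that $|\Delta_w f(z)|\to 1$ non-tangentially, which is why I would organize the proof around the pseudo-hyperbolic identity above.
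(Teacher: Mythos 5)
Your proof is correct and follows essentially the same route as the paper: the same pseudo-hyperbolic identity expressing $1-|\Delta_wf(z)|^2$ in terms of $\frac{1-|f(z)|^2}{1-|z|^2}$, followed by a passage to the limit at $\sigma$ and the same care about trading the $(1-|g|^2)$-quotient for the $(1-|g|)$-quotient. The only difference is that the paper evaluates everything along the radius, using Corollary~\ref{th:1.bdc2} to identify both $\lim_{r\to1^-}(1-|f(r\sigma)|^2)/(1-r^2)$ and $\beta_{\Delta_wf}(\sigma)$ with radial limits, whereas you take full non-tangential limits and invoke the (true, but not literally contained in the quoted statement of Julia--Wolff--Carath\'eodory) fact that $(1-|f(z)|^2)/(1-|z|^2)\to\beta_f(\sigma)$ in every Stolz angle; the radial version suffices and spares you that extra justification.
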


So if the boundary dilation coefficient is finite for $f$ it remains finite for all the iterated hyperbolic difference quotients of~$f$. This allows us to obtain a multi-point Julia lemma (see Theorem~\ref{th:1.mpJulia}):

\begin{Theorem}
\label{th:0.mpJulia}
Given $k\ge 1$, take $f\in\Hol(\D,\D)$ not a Blaschke product of degree at most~$k$. Let $\sigma\in\de\D$ be such that $\beta_f(\sigma)<+\infty$.
Then
\begin{equation}
\frac{|\Delta_{w_k,\ldots,w_1}f(\sigma)-\Delta_{w_k,\ldots,w_1}f(z)|^2}{1-|\Delta_{w_k,\ldots,w_1}f(z)|^2}\le\beta_{\Delta_{w_k,\ldots,w_1}f}(\sigma)\frac{|\sigma-z|^2}{1-|z|^2}
\label{eq:0.mpJulia}
\end{equation}
for all $z$, $w_1,\ldots,w_k\in\D$. Moreover, equality occurs for some $z_0$,~$w_1,\ldots,w_k\in\D$ if and only if it occurs everywhere if and only if $f$ is a Blaschke product of degree~$k+1$.
\end{Theorem}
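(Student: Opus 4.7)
The plan is to iterate the hyperbolic difference quotient so that the whole statement reduces to a single application of the classical Julia lemma. Set $g:=\Delta_{w_k,\ldots,w_1}f$. Two ingredients make the reduction work. First, the multipoint Schwarz-Pick Theorem~\ref{th:0.mpSP} of Baribeau-Rivard-Wegert guarantees that, since $f$ is not a Blaschke product of degree at most~$k$, the iterated difference quotient $g$ is a holomorphic self-map of~$\D$ for every choice of $w_1,\ldots,w_k\in\D$. Second, Proposition~\ref{th:0.main}, applied $k$ times in succession along the chain $f,\ \Delta_{w_1}f,\ \Delta_{w_2,w_1}f,\ \ldots,\ \Delta_{w_k,\ldots,w_1}f$, propagates finiteness of the boundary dilation coefficient at~$\sigma$ from each map to the next, so that $\beta_g(\sigma)<+\infty$.

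Having $g\in\Hol(\D,\D)$ with $\beta_g(\sigma)<+\infty$, I would apply Theorem~\ref{th:0.cinque} directly to $g$ at $\sigma$. This produces a unique $\tau\in\de\D$ -- the non-tangential limit of $g$ at~$\sigma$ -- such that
\[
\frac{|\tau-g(z)|^2}{1-|g(z)|^2}\le\beta_g(\sigma)\frac{|\sigma-z|^2}{1-|z|^2}
\]
for every $z\in\D$; denoting $\tau$ by $\Delta_{w_k,\ldots,w_1}f(\sigma)$ in accordance with the standard convention for angular limits gives exactly~\eqref{eq:0.mpJulia}, with no additional computation required.

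For the equality clause I would argue as follows. The equality case of Theorem~\ref{th:0.cinque} says that equality in \eqref{eq:0.mpJulia} at any single interior point $z_0$ is equivalent to $g$ being an automorphism of~$\D$, and in that case equality holds everywhere. So the remaining task is to translate ``$g\in\Aut(\D)$'' into a property of~$f$. But $g$ being an automorphism is equivalent to equality in the Schwarz-Pick inequality $\omega(g(z),g(w))\le\omega(z,w)$ at some (equivalently, every) pair $z\ne w$, and this is precisely the equality clause of Theorem~\ref{th:0.mpSP}, which characterises it as $f$ being a Blaschke product of degree $k+1$. The delicate part I expect is the bookkeeping in the iterated use of Proposition~\ref{th:0.main}: at each intermediate stage $j<k$ I have to know that $\Delta_{w_j,\ldots,w_1}f$ is a genuine (not unimodular-constant) holomorphic self-map of~$\D$ so that the next hyperbolic difference quotient is defined and the proposition is applicable; but Theorem~\ref{th:0.mpSP} applied at level~$j\le k$ hands this to us, because $f$ is also not a Blaschke product of degree at most~$j$.
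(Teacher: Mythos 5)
Your proposal is correct and follows essentially the same route as the paper: apply the classical Julia lemma to $\Delta_{w_k,\ldots,w_1}f$, after checking via Proposition~\ref{th:0.main} (iterated) that its boundary dilation coefficient at $\sigma$ is finite, and handle the equality case by noting that equality forces $\Delta_{w_k,\ldots,w_1}f\in\Aut(\D)$. The only cosmetic difference is that you translate ``$\Delta_{w_k,\ldots,w_1}f\in\Aut(\D)$ iff $f\in\mathcal{B}_{k+1}$'' through the equality clause of the Baribeau--Rivard--Wegert theorem, while the paper uses its own Proposition~\ref{th:1.Blhdq} (the degree-drop property of the hyperbolic difference quotient), which is the same fact.
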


We now describe two applications of this theorem. We mentioned before that the boundary dilation coefficient is always strictly positive. In some instances it is useful to have a more explicit bound from below, like the classical one
\begin{equation}
\beta_f(\sigma)\ge\frac{1-|f(0)|}{1+|f(0)|}\;.
\label{eq:0.CP}
\end{equation}
In Section~\ref{sec:3} we shall prove a much more precise estimate (see Theorem~\ref{th:2.bassogen}):

\begin{Theorem}
\label{th:0.bassogen}
Given $k\ge 0$ let $f\in\Hol(\D,\D)$ be not a Blaschke product of degree at most~$k$. Take $\sigma\in\de\D$ with $\beta_f(\sigma)<+\infty$. Then
\begin{equation}
\beta_f(\sigma)\ge\sum_{j=0}^k\frac{1-|w_{j+1}|^2}{|\sigma-w_{j+1}|^2}\prod_{h=0}^j\frac{|\Delta_{w_h,\ldots,w_1}f(\sigma)-\Delta_{w_h,\ldots,w_1}f(w_{h+1})|^2}{1-|\Delta_{w_h,\ldots,w_1}f(w_{h+1})|^2}
\label{eq:0.bassogen}
\end{equation}
for every $w_1,\ldots,w_{k+1}\in\D$, where $\Delta_{w_h,\ldots,w_1}f=f$ when $h=0$. Furthermore we have equality in \eqref{eq:0.bassogen} for some $w_1,\ldots,w_{k+1}\in\D$ if and only if we have equality for all $w_1,\ldots,w_{k+1}\in\D$ if and only if $f$ is a Blaschke product of degree~$k+1$.
\end{Theorem}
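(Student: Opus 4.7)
The plan is to argue by induction on $k\ge 0$. The base case $k=0$ is nothing but the classical Julia lemma (Theorem~\ref{th:0.cinque}) applied to $f$ with the point $z=w_1$: it yields
\[
\beta_f(\sigma)\ge \frac{1-|w_1|^2}{|\sigma-w_1|^2}\cdot\frac{|f(\sigma)-f(w_1)|^2}{1-|f(w_1)|^2},
\]
which is exactly \eqref{eq:0.bassogen} for $k=0$, with equality iff $f\in\Aut(\D)$, i.e., iff $f$ is a Blaschke product of degree~$1$.

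For the inductive step, set $g:=\Delta_{w_1}f$. Since $f$ is not a Blaschke product of degree at most $k$, $g$ is not a Blaschke product of degree at most $k-1$ (standard: $f$ is Blaschke of degree $d$ iff $f^*(\cdot,w)$ is Blaschke of degree $d-1$, or alternatively use the equality case of Theorem~\ref{th:0.mpSP}); moreover $\beta_g(\sigma)<+\infty$ by Proposition~\ref{th:0.main}. So the induction hypothesis applies to $g$ with the $k$ points $w_2,\ldots,w_{k+1}$; using the identity $\Delta_{w_{h+1},\ldots,w_2}g=\Delta_{w_{h+1},\ldots,w_1}f$ (immediate from the definition of iterated hyperbolic difference quotient) and shifting indices, this gives
\[
\beta_g(\sigma)\ge \sum_{j=1}^{k}\frac{1-|w_{j+1}|^2}{|\sigma-w_{j+1}|^2}\prod_{h=1}^{j}\frac{|\Delta_{w_h,\ldots,w_1}f(\sigma)-\Delta_{w_h,\ldots,w_1}f(w_{h+1})|^2}{1-|\Delta_{w_h,\ldots,w_1}f(w_{h+1})|^2}.
\]
On the other hand, rearranging Proposition~\ref{th:0.main} produces the identity
\[
\beta_f(\sigma)=\frac{|f(\sigma)-f(w_1)|^2}{1-|f(w_1)|^2}\,\beta_g(\sigma)+\frac{1-|w_1|^2}{|\sigma-w_1|^2}\cdot\frac{|f(\sigma)-f(w_1)|^2}{1-|f(w_1)|^2}.
\]
Inserting the lower bound for $\beta_g(\sigma)$ and interpreting the prefactor $\frac{|f(\sigma)-f(w_1)|^2}{1-|f(w_1)|^2}$ as the $h=0$ factor of the inner product (which is exactly where the convention $\Delta_{w_0,\ldots,w_1}f=f$ enters), the second summand above furnishes the $j=0$ term of \eqref{eq:0.bassogen}, and the summands for $j\ge 1$ assemble into the remaining terms. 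This completes the induction.

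For the equality statement, the key observation is that Proposition~\ref{th:0.main} is an identity, so the only inequality used in the whole argument is the one coming from the induction hypothesis. Hence equality in \eqref{eq:0.bassogen} for some $(w_1,\ldots,w_{k+1})$ propagates inductively down to equality in the base-case Julia lemma, which requires the innermost iterated quotient to be an automorphism; by the correspondence Blaschke of degree $d$ $\leftrightarrow$ Blaschke of degree $d-1$ for the hyperbolic difference quotient (equivalently, by Theorem~\ref{th:0.mpSP}/\ref{th:0.mpJulia}), this is equivalent to $f$ being a Blaschke product of degree $k+1$, in which case equality holds for every choice of $w_1,\ldots,w_{k+1}\in\D$. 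I expect the main obstacle to be purely combinatorial: verifying carefully that the sum produced by the induction hypothesis on $g$, once multiplied by the prefactor and re-indexed via $j\mapsto j+1$, $h\mapsto h+1$, matches the right-hand side of \eqref{eq:0.bassogen} term by term under the convention on $h=0$.
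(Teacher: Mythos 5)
Your proposal is correct and follows essentially the same route as the paper: induction on $k$ with the classical Julia lemma as base case, the identity from Proposition~\ref{th:0.main} (Proposition~\ref{th:1.beta}) to pass from $\beta_{\Delta_{w_1}f}(\sigma)$ to $\beta_f(\sigma)$, and Proposition~\ref{th:1.Blhdq} to track the Blaschke degree through the equality case. The re-indexing you flag as the main obstacle does work out exactly as you describe, so nothing is missing.
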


In particular we have the following corollary (see Corollary~\ref{th:2.bassot}):

\begin{Corollary}
\label{th:0.bassot}
Given $k\ge 0$ let $f\in\Hol(\D,\D)$ be not a Blaschke product of degree at most~$k$. Take $\sigma\in\de\D$ with $\beta_f(\sigma)<+\infty$. Then
\[
\beta_f(\sigma)\ge\sum_{j=0}^k\prod_{h=0}^j\frac{1-|\Delta_{\mathbf{O}_h}f(0)|}{1+|\Delta_{\mathbf{O}_h}f(0)|}\;,
\]
where $\mathbf{O}_h=(0,\ldots,0)\in\D^h$ is the origin of~$\C^h$, and $\Delta_{\mathbf{O}_h}f=f$ when $h=0$. Furthermore we have equality if and only if $f$ is a Blaschke product of degree~$k+1$ and $\Delta_{\mathbf{O}_h}f(0)=|\Delta_{\mathbf{O}_h}f(0)|\Delta_{\mathbf{O}_h}f(\sigma)$
for all $h=0,\ldots,k$.
\end{Corollary}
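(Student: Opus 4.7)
The plan is to derive the corollary from Theorem~\ref{th:0.bassogen} by specializing all the points $w_j$ to the origin and then bounding each factor of the resulting product from below. First I would set $w_1=\cdots=w_{k+1}=0$ in \eqref{eq:0.bassogen}. Since $|\sigma|=1$, every prefactor $\frac{1-|w_{j+1}|^2}{|\sigma-w_{j+1}|^2}$ collapses to $1$, and the iterated difference quotients become $\Delta_{\mathbf{O}_h}f$ in the notation of the statement, so the right-hand side of \eqref{eq:0.bassogen} reduces to
\[
\sum_{j=0}^k\prod_{h=0}^j\frac{|\Delta_{\mathbf{O}_h}f(\sigma)-\Delta_{\mathbf{O}_h}f(0)|^2}{1-|\Delta_{\mathbf{O}_h}f(0)|^2}.
\]

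Next I would replace each factor by the announced lower bound. By Proposition~\ref{th:0.main} applied iteratively the hypothesis $\beta_f(\sigma)<+\infty$ propagates to $\beta_{\Delta_{\mathbf{O}_h}f}(\sigma)<+\infty$ for every $h$, hence $\Delta_{\mathbf{O}_h}f$ admits a non-tangential limit $\Delta_{\mathbf{O}_h}f(\sigma)$ of modulus one. Writing $a_h=\Delta_{\mathbf{O}_h}f(0)$, the triangle inequality together with $|\Delta_{\mathbf{O}_h}f(\sigma)|=1$ yields $|\Delta_{\mathbf{O}_h}f(\sigma)-a_h|\ge 1-|a_h|$, and therefore
\[
\frac{|\Delta_{\mathbf{O}_h}f(\sigma)-a_h|^2}{1-|a_h|^2}\ge\frac{(1-|a_h|)^2}{1-|a_h|^2}=\frac{1-|a_h|}{1+|a_h|}.
\]
Inserting these inequalities into the reduced form of \eqref{eq:0.bassogen} gives exactly the claimed bound for $\beta_f(\sigma)$.

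For the equality discussion, the Blaschke condition comes for free from the equality clause of Theorem~\ref{th:0.bassogen}; what remains is to identify when each of the auxiliary triangle inequalities used above is sharp. Equality in $|\Delta_{\mathbf{O}_h}f(\sigma)-a_h|\ge 1-|a_h|$ forces $a_h$ to be a nonnegative real multiple of $\Delta_{\mathbf{O}_h}f(\sigma)$, which, since $|\Delta_{\mathbf{O}_h}f(\sigma)|=1$, is precisely the condition $\Delta_{\mathbf{O}_h}f(0)=|\Delta_{\mathbf{O}_h}f(0)|\,\Delta_{\mathbf{O}_h}f(\sigma)$ stated in the corollary. Conversely, when $f$ is a Blaschke product of degree $k+1$ and all collinearity conditions hold, reversing the two steps (triangle inequality and specialization of \eqref{eq:0.bassogen}) produces equality. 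The only delicate point I foresee is the bookkeeping in the equality case when some $a_h$ vanishes, in which case the collinearity condition degenerates but the factor $\frac{1-|a_h|}{1+|a_h|}=1$ is still attained trivially; I would check that the statement remains self-consistent in this edge case.
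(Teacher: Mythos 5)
Your proof is correct and takes essentially the same route as the paper: Corollary~\ref{th:2.bassot} is obtained there precisely by setting $w_1=\cdots=w_{k+1}=0$ in Theorem~\ref{th:2.bassogen} and then applying the standard estimate $|\tau-z|\ge 1-|z|$ for $\tau\in\de\D$, $z\in\D$, with equality if and only if $z=|z|\tau$, which is exactly your triangle-inequality step. Your equality analysis (including the observation that positivity of all factors forces termwise and factorwise equality, and that the case $\Delta_{\mathbf{O}_h}f(0)=0$ is degenerate but consistent) matches the paper's intended argument.
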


These results when $k=0$ recover \eqref{eq:0.CP} and when $k\ge 1$ improve previous estimates due to \"Unkelbach~\cites{Unkelbach1, Unkelbach2}, Komatu~\cite{Komatu}, Frovlova et al.~\cite{Frovlova}, Osserman~\cite{Osserman} and
Mercer~\cite{Mercer2018}.

When $f(0)=0$ the estimate \eqref{eq:0.CP} implies that $\beta_f(\sigma)\ge 1$. In 1982, Cowen and Pommerenke \cite{CowenPommerenke} proved that if moreover $f(\sigma)=\sigma$ this estimate can be improved to
\[
\beta_f(\sigma)\ge 1+\frac{|1-f'(0)|^2}{1-|f'(0)|^2}\;.
\]
More precisely, they obtained a sharp estimate valid when $f$ has a fixed point inside and $n$ fixed points on the boundary:

\begin{Theorem}[Cowen-Pommerenke, 1982]
\label{th:0.CPtrue}
Let $f\in\Hol(\D,\D)\setminus\Aut(\D)$ be such that $f(z_0)=z_0$ for some $z_0\in\D$. Assume there exist $\sigma_1,\ldots,\sigma_n\in\de\D$ distinct points with $\beta_f(\sigma_j)<+\infty$ and $f(\sigma_j)=\sigma_j$ for $j=1,\ldots,n$. Then
\begin{equation}
\sum_{j=1}^n \frac{1}{\beta_f(\sigma_j)-1}\le\frac{1-|f'(z_0)|^2}{|1-f'(z_0)|^2}\;.
\label{eq:0.CPtrue}
\end{equation}
Furthermore, equality holds if and only if $f$ is a Blaschke product of degree~$n+1$.
\end{Theorem}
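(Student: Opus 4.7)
The plan is to reduce the $n$-point boundary statement to a single Herglotz-representation argument, using Proposition~\ref{th:0.main} to eliminate one layer of hyperbolic difference quotient. After conjugating $f$ by an automorphism of~$\D$ sending $z_0$ to the origin (this preserves $f'(z_0)$ and every $\beta_f(\sigma_j)$), I may assume $z_0=0$ and set $\lambda:=f'(0)$; the Schwarz--Pick lemma and the hypothesis $f\notin\Aut(\D)$ give $|\lambda|<1$. By Theorem~\ref{th:0.tpSP}, $g:=\Delta_0 f$ is a holomorphic self-map of~$\D$, and using $f(0)=0$ one has $g(z)=f(z)/z$, so $g(0)=\lambda$ and the non-tangential limit of $g$ at each $\sigma_j$ is~$1$. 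Proposition~\ref{th:0.main} applied with $w=0$, together with $f(\sigma_j)=\sigma_j$, yields
\[
\beta_g(\sigma_j)=\beta_f(\sigma_j)-1,
\]
so the target inequality becomes $\sum_{j=1}^n 1/\beta_g(\sigma_j)\le (1-|\lambda|^2)/|1-\lambda|^2$.

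Next I would pass to the right half-plane via the Cayley transform $H:=(1+g)/(1-g)$, a well defined holomorphic map from~$\D$ into $\{\Re w>0\}$. A direct computation shows $\Re H(z)=(1-|g(z)|^2)/|1-g(z)|^2$, hence
\[
\Re H(0)=\frac{1-|\lambda|^2}{|1-\lambda|^2},
\]
which is precisely the right-hand side of~\eqref{eq:0.CPtrue}. Applying Julia's lemma (Theorem~\ref{th:0.cinque}) to $g$ at each~$\sigma_j$, where $g(\sigma_j)=1$, and rewriting the resulting inequality in terms of $\Re H$ gives
\[
\Re H(z)\ge\frac{1}{\beta_g(\sigma_j)}\cdot\frac{1-|z|^2}{|\sigma_j-z|^2}
\]
for every $z\in\D$ and every $j=1,\ldots,n$.

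The crucial step is to combine these $n$ separate Poisson-type lower bounds into a single inequality at~$z=0$. By the Herglotz representation, $\Re H(z)=\int_{\de\D}P(z,e^{i\theta})\,d\mu(\theta)$ for a unique positive Borel measure $\mu$ on~$\de\D$, where $P(z,\sigma)=(1-|z|^2)/|\sigma-z|^2$. For each $j$ the preceding inequality exhibits $\Re H-\beta_g(\sigma_j)^{-1}P(\cdot,\sigma_j)$ as a non-negative harmonic function on~$\D$; by linearity and uniqueness of the Herglotz representation, its representing measure is $\mu-\beta_g(\sigma_j)^{-1}\delta_{\sigma_j}$, and positivity of that measure forces $\mu\ge\beta_g(\sigma_j)^{-1}\delta_{\sigma_j}$. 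Since the $\sigma_j$ are distinct, the atomic contributions add: $\mu\ge\sum_j\beta_g(\sigma_j)^{-1}\delta_{\sigma_j}$. Evaluating at $z=0$ and using $P(0,\sigma)=1$ gives $\Re H(0)=\mu(\de\D)\ge\sum_j 1/\beta_g(\sigma_j)$, which is exactly~\eqref{eq:0.CPtrue}.

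The main obstacle is the equality statement, where the multipoint structure really matters. Equality in the previous chain forces $\mu=\sum_j\beta_g(\sigma_j)^{-1}\delta_{\sigma_j}$, so $H$ equals $\sum_j\beta_g(\sigma_j)^{-1}(\sigma_j+z)/(\sigma_j-z)+ic$ for some $c\in\R$. Inverting the Cayley transform, $g=(H-1)/(H+1)$ is rational, takes unimodular values on~$\de\D$, and a pole count shows it is a finite Blaschke product of degree exactly~$n$; hence $f(z)=zg(z)$ is a Blaschke product of degree~$n+1$ with $f(0)=0$. Conversely, if $f$ is such a Blaschke product then $g$ is an $n$-Blaschke product, $H$ is a rational function with only simple poles on~$\de\D$ whose Herglotz measure is purely atomic at those poles, and every inequality in the chain above is saturated simultaneously; this is where the equality characterizations of Theorems~\ref{th:0.tpSP} and~\ref{th:0.cinque} are used together.
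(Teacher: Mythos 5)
Your proof is essentially correct, but it follows a genuinely different route from the paper's. The paper proves Proposition~\ref{th:1.CPtrue} by induction on~$n$: after the same reduction to $z_0=0$ and the same passage to $g=\Delta_0f=f(z)/z$ with $\beta_g(\sigma_j)=\beta_f(\sigma_j)-1$, it subtracts the first $n-1$ Poisson kernels from $(1+g)/(1-g)$, shows via Lemma~\ref{th:1.Bz} and a fixed-point count that the resulting map $h$ is a genuine self-map of~$\D$ fixing $\sigma_n$ with controlled angular derivative, and then applies the $2$-point Julia lemma (Corollary~\ref{th:1.CP}) to $zh(z)$ to peel off the last boundary point. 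You instead treat all $n$ points symmetrically in one step through the Herglotz representation of $\Re\frac{1+g}{1-g}$, converting each Julia inequality into the statement $\mu(\{\sigma_j\})\ge 1/\beta_g(\sigma_j)$ and summing the atoms. Your argument is closer in spirit to Cowen and Pommerenke's original one; it is shorter and makes the role of the distinctness of the $\sigma_j$ transparent, whereas the paper's induction stays entirely inside its hyperbolic-difference-quotient machinery and is the template it reuses verbatim for the harder multiple-fixed-point generalization (Theorem~\ref{th:2.CPgenz}).

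One imprecision worth fixing: in the converse equality direction you say that ``every inequality in the chain above is saturated simultaneously'' and invoke the equality cases of Theorems~\ref{th:0.tpSP} and~\ref{th:0.cinque}. For $n\ge 2$ the individual Julia inequalities at each $\sigma_j$ are \emph{strict} (equality in Theorem~\ref{th:0.cinque} for $g$ would force $g\in\Aut(\D)$, i.e.\ degree~$1$, not~$n$), so those equality characterizations are not the right tool. What you actually need is that when $g\in\mathcal{B}_n$ the Herglotz measure is supported exactly on $\{\sigma_1,\ldots,\sigma_n\}$ (which you correctly note, since $\Re H$ extends continuously to~$0$ on the rest of $\de\D$) \emph{and} that each point mass equals exactly $1/\beta_g(\sigma_j)$, not merely at least that. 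The latter follows from the radial limit version of the Julia quotient (Corollary~\ref{th:1.bdc2} together with Theorem~\ref{th:I.2.dieci}), which gives $\lim_{r\to1^-}\frac{1-r}{1+r}\Re H(r\sigma_j)=1/\beta_g(\sigma_j)$, hence $\mu(\{\sigma_j\})=1/\beta_g(\sigma_j)$. With that substitution the converse is complete, and summing the atoms gives equality.
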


In Section~\ref{sec:2} we shall show (see Proposition~\ref{th:1.CPtrue}) how to obtain this result as a consequence of our Theorem~\ref{th:0.mpJulia} for $k=1$.
More interestingly, in Section~\ref{sec:3} we shall generalize the estimate \eqref{eq:0.CPtrue} to the case of multiple fixed points (see Theorem~\ref{th:2.CPgenz}):

\begin{Theorem}
\label{th:0.CPgenz}
Let $f\in\Hol(\D,\D)$. Given $k\ge 1$, assume that $f$ is not a Blaschke product of degree at most~$k\ge 1$ and that there exists $z_0\in\D$ 
such that $f(z_0)=z_0$ and $f'(z_0)=\ldots=f^{(k-1)}(z_0)=0$. 
Take $\sigma_1,\ldots,\sigma_n\in\de\D$ distinct points such that $\beta_f(\sigma_j)<+\infty$ and 
\[
f(\sigma_j)=\frac{\left(\frac{\sigma_j-z_0}{1-\overline{z_0}\sigma_j}\right)^k+z_0}{1+\overline{z_0}\left(\frac{\sigma_j-z_0}{1-\overline{z_0}\sigma_j}\right)^k}\;,
\]
for $j=1,\ldots,n$. Then
\[
\sum_{j=1}^{n}\frac{1}{\left(1+2\Re\frac{\left(f(\sigma_j)-\sigma_j\right)\overline{z_0}}{|f(\sigma_j)-z_0|^2}\right)\beta_f(\sigma_j)-k}\le\frac{1-\left|\frac{f^{(k)}(z_0)}{k!}(1-|z_0|^2)^{k-1}\right|^2}{\left|1-\frac{f^{(k)}(z_0)}{k!}(1-|z_0|^2)^{k-1}\right|^2}\;,
\]
with equality if and only if $f$ is a Blaschke product of degree~$n+k$.
\end{Theorem}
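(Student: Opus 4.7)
The plan is to reduce the problem to $z_0 = 0$ by conjugation with an automorphism, and then apply the classical Cowen--Pommerenke estimate (Theorem~\ref{th:0.CPtrue}) to an auxiliary self-map of $\D$ manufactured from an iterated hyperbolic difference quotient of $f$.

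Setting $\phi(z) = (z - z_0)/(1 - \overline{z_0}z)$, $g = \phi \circ f \circ \phi^{-1}$, and $\tau_j = \phi(\sigma_j)$, the hypotheses translate as follows: $g \in \Hol(\D,\D)$ is not a Blaschke product of degree at most $k$, $g(0) = g'(0) = \cdots = g^{(k-1)}(0) = 0$, a Taylor expansion at the origin gives $g^{(k)}(0)/k! = \lambda$, and the boundary condition on $f$ becomes simply $g(\tau_j) = \tau_j^k$. Applying the chain rule to $f = \phi^{-1} \circ g \circ \phi$ at $\sigma_j$ and using that $|f(\sigma_j)| = |\tau_j^k| = 1$, one finds
\[
\beta_g(\tau_j) = \beta_f(\sigma_j)\,\frac{|\sigma_j - z_0|^2}{|f(\sigma_j) - z_0|^2} = \left(1 + 2\Re\frac{(f(\sigma_j) - \sigma_j)\overline{z_0}}{|f(\sigma_j) - z_0|^2}\right)\beta_f(\sigma_j)\;,
\]
so the coefficient on the left-hand side of the statement is precisely $\beta_g(\tau_j)$. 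It therefore suffices to prove the theorem with $f$ replaced by $g$, i.e.\ under the additional assumption $z_0 = 0$.

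In this reduced setting I would take $F = \Delta_{\mathbf{O}_k} g$, which belongs to $\Hol(\D,\D)$ by Theorem~\ref{th:0.mpSP}. The vanishing of the first $k$ Taylor coefficients of $g$ gives the explicit form $F(z) = g(z)/z^k$; hence $F(0) = \lambda$ and $F(\tau_j) = 1$ non-tangentially. An inductive application of Proposition~\ref{th:0.main} at $w = 0$, observing at each stage that $\Delta_{\mathbf{O}_h} g$ still vanishes at the origin and has unimodular boundary value $\tau_j^{k-h}$ at $\tau_j$ (so the multiplicative factor collapses to $1$), yields $\beta_F(\tau_j) = \beta_g(\tau_j) - k$. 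Setting $\eta = (1-\lambda)/(1-\overline{\lambda}) \in \de \D$, I would then introduce the auxiliary function
\[
\tilde F(z) = \overline{\eta}\,z\,\frac{F(z) - \lambda}{1 - \overline{\lambda} F(z)}\;,
\]
which is a product of two self-maps of $\D$ and hence lies in $\Hol(\D,\D)$. A direct computation gives $\tilde F(0) = \tilde F'(0) = 0$ (so $\tilde F \notin \Aut(\D)$) and $\tilde F(\tau_j) = \tau_j$, and the product rule for angular derivatives together with the Julia--Wolff--Carath\'eodory identity $F'(\tau_j) = \beta_F(\tau_j)/\tau_j$ produces
\[
\beta_{\tilde F}(\tau_j) = 1 + \frac{1 - |\lambda|^2}{|1 - \lambda|^2}\beta_F(\tau_j)\;.
\]

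Finally I would apply Theorem~\ref{th:0.CPtrue} to $\tilde F$ at the interior fixed point $0$ with boundary fixed points $\tau_1, \ldots, \tau_n$; using $\tilde F'(0) = 0$ to collapse the right-hand side to $1$ and substituting the explicit expression for $\beta_{\tilde F}(\tau_j) - 1$ gives $\sum_{j=1}^n 1/\beta_F(\tau_j) \le (1-|\lambda|^2)/|1-\lambda|^2$, which unwinds via the reduction of the first paragraph to the desired inequality. The equality case is handled by reversing the chain of constructions: equality in Theorem~\ref{th:0.CPtrue} forces $\tilde F$ to be a Blaschke product of degree $n+1$, whence $F$ is a Blaschke product of degree $n$, and consequently $g(z) = z^k F(z)$ and $f$ are Blaschke products of degree $n+k$. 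The step I expect to require the most care is the inductive computation $\beta_F(\tau_j) = \beta_g(\tau_j) - k$: at each of the $k$ iterations one must keep track of both the vanishing at $0$ and the unimodular non-tangential boundary value at $\tau_j$, since both are needed for the multiplicative factor in Proposition~\ref{th:0.main} to collapse and leave a clean subtraction of $1$.
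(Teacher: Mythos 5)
Your proposal is correct, but it takes a genuinely different route from the paper. The paper never conjugates: it works directly with the iterated difference quotients $\Delta_{(\mathbf{z_0})_k}f$ based at $z_0$, proves three inductive formulas (the Taylor expansion \eqref{eq:2.CPzuno}, the boundary dilation \eqref{eq:2.CPzdue}, and the boundary value \eqref{eq:2.Dhs}), and then reruns the whole Cowen--Pommerenke induction on $n$ from scratch, using Lemma~\ref{th:1.Bz} and Corollary~\ref{th:1.CP} to peel off one boundary fixed point at a time, with the equality case woven into that induction. You instead conjugate by $\phi(z)=(z-z_0)/(1-\overline{z_0}z)$, check that the coefficient $1+2\Re\bigl((f(\sigma_j)-\sigma_j)\overline{z_0}/|f(\sigma_j)-z_0|^2\bigr)$ is exactly the Jacobian factor $|\sigma_j-z_0|^2/|f(\sigma_j)-z_0|^2$ coming from the chain rule for boundary dilation coefficients, and that $g^{(k)}(0)/k!=(1-|z_0|^2)^{k-1}f^{(k)}(z_0)/k!$, after which Proposition~\ref{th:1.CPtrue} can be invoked as a black box. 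This is notable because Remark~\ref{rem:2.fine} asserts that the conjugation approach ``does not allow to easily deduce'' Theorem~\ref{th:2.CPgenz} from the $z_0=0$ case; your computation shows that it does go through cleanly once one recognizes the mysterious coefficient as the conjugation Jacobian, which arguably demystifies the statement. Two small remarks: your auxiliary map $\tilde F(z)=\overline{\eta}\,z\,\gamma_\lambda\bigl(F(z)\bigr)$ is a detour --- applying Proposition~\ref{th:1.CPtrue} directly to $zF(z)=\Delta_{\mathbf{O}_{k-1}}g$, which fixes $0$ and each $\tau_j$ and has derivative $\lambda$ at the origin, is what the paper's alternative proof of Corollary~\ref{th:2.CPgen} does and is shorter; and the ``product rule for angular derivatives'' you invoke should be justified via the identity $1-|pq|=(1-|p|)+|p|(1-|q|)$ together with the radial-limit characterization of $\beta$ in Corollary~\ref{th:1.bdc2}, which gives $\beta_{pq}(\tau)=\beta_p(\tau)+\beta_q(\tau)$; this is routine but worth a line. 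What your approach buys is economy (no repetition of the Cowen--Pommerenke induction); what the paper's buys is uniformity --- the same difference-quotient machinery handles statement, inequality and equality case without leaving the $z_0$-based framework.
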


The proof in the general case is a bit delicate and requires the full force of our multipoint Julia lemma. However,
the case $z_0=0$ has a simpler statement, and actually a much easier proof (see Corollary~\ref{th:2.CPgen}):

\begin{Corollary}
\label{th:0.CPgen}
Let $f\in\Hol(\D,\D)$. Given $k\ge 1$, assume that $f$ is not a Blaschke product of degree at most~$k$ and that 
\[
f(0)=\cdots=f^{(k-1)}(0)=0\;.
\] 
Take $\sigma_1,\ldots,\sigma_n\in\de\D$ distinct points such that $\beta_f(\sigma_j)<+\infty$ and $f(\sigma_j)=\sigma_j^{k}$
for $j=1,\ldots,n$. Then
\[
\sum_{j=1}^{n}\frac{1}{\beta_f(\sigma_j)-k}\le\frac{1-\left|\frac{f^{(k)}(0)}{k!}\right|^2}{\left|1-\frac{f^{(k)}(0)}{k!}\right|^2}\;,
\]
with equality if and only if $f$ is a Blaschke product of degree~$n+k$.
\end{Corollary}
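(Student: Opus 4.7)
The plan is to reduce the corollary to a Herglotz representation argument applied to the Cayley transform of the top iterated hyperbolic difference quotient. First I would set
\[
g(z) := \frac{f(z)}{z^k} = \Delta_{\mathbf{O}_k}f(z),
\]
which is holomorphic on $\D$ because $f$ vanishes to order~$k$ at the origin; the hypothesis that $f$ is not a Blaschke product of degree at most~$k$ together with Theorem~\ref{th:0.mpSP} (or just the Schwarz lemma) gives $g\in\Hol(\D,\D)$ with $g(0)=\alpha:=f^{(k)}(0)/k!$ and $|\alpha|<1$. The condition $f(\sigma_j)=\sigma_j^{k}$ forces $g$ to have non-tangential limit~$1$ at every $\sigma_j$, and a direct computation (equivalent to iterating Proposition~\ref{th:0.main} $k$ times with $w=0$) yields $\beta_g(\sigma_j)=\beta_f(\sigma_j)-k$.

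Next I would pass to the Cayley transform $\phi(z):=(1+g(z))/(1-g(z))$; it is holomorphic on $\D$ with $\Re\phi(z)=(1-|g(z)|^2)/|1-g(z)|^2>0$, so the Herglotz representation theorem provides a real constant $c$ and a positive finite Borel measure $\mu$ on $\partial\D$ with
\[
\phi(z)=ic+\int_{\partial\D}\frac{\zeta+z}{\zeta-z}\,d\mu(\zeta).
\]
Evaluating the real part at $z=0$ identifies the right-hand side of the desired estimate as the total mass:
\[
\mu(\partial\D)=\Re\phi(0)=\frac{1-|\alpha|^2}{|1-\alpha|^2}.
\]

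The main step, and where I expect the heart of the argument to lie, is the atom bound $\mu(\{\sigma_j\})\ge 1/\beta_g(\sigma_j)$. Applying the classical Julia lemma (Theorem~\ref{th:0.cinque}) to $g$ at $\sigma_j$, where $g$ has non-tangential limit $1$ and finite boundary dilation, gives
\[
\Re\phi(z)\ge\frac{1}{\beta_g(\sigma_j)}\cdot\frac{1-|z|^2}{|\sigma_j-z|^2}\qquad\text{for every }z\in\D.
\]
Since $(1-|z|^2)/|\sigma_j-z|^2$ is the real part of the Herglotz kernel $(\sigma_j+z)/(\sigma_j-z)$ of a unit point mass at~$\sigma_j$, the function $\phi(z)-\beta_g(\sigma_j)^{-1}(\sigma_j+z)/(\sigma_j-z)$ still has non-negative real part; by uniqueness in the Herglotz representation its measure is $\mu-\beta_g(\sigma_j)^{-1}\delta_{\sigma_j}$, and positivity of this measure forces $\mu(\{\sigma_j\})\ge 1/(\beta_f(\sigma_j)-k)$.

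Since the $\sigma_j$ are distinct, their point masses contribute disjointly to $\mu$, so summing and using the total-mass computation yields
\[
\sum_{j=1}^{n}\frac{1}{\beta_f(\sigma_j)-k}\le\sum_{j=1}^{n}\mu(\{\sigma_j\})\le\mu(\partial\D)=\frac{1-|\alpha|^2}{|1-\alpha|^2},
\]
which is the announced inequality. For the equality case, equality in the whole chain forces $\mu=\sum_j\beta_g(\sigma_j)^{-1}\delta_{\sigma_j}$; then $\phi$ is a rational function with $n$ simple poles on~$\partial\D$ and purely imaginary boundary values off these poles, so $g$ is rational and inner, hence a finite Blaschke product of degree exactly~$n$, and $f=z^k g$ a Blaschke product of degree~$n+k$. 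The converse (that such a Blaschke product realizes equality) is a direct verification.
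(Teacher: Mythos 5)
Your proposal is correct, but it follows a genuinely different route from the paper. The paper deduces the corollary in one line from Theorem~\ref{th:2.CPgenz} with $z_0=0$ (or, alternatively, by applying Proposition~\ref{th:1.CPtrue} to $\Delta_{\mathbf{O}_{k-1}}f=f(z)/z^{k-1}$, which fixes each $\sigma_j$ with dilation coefficient $\beta_f(\sigma_j)-(k-1)$); in either case the real work is an induction on~$n$ that peels off one Herglotz kernel at a time, rewriting the remainder as $(1+h)/(1-h)$ for a new self-map $h$, forming $zh(z)$ and applying the $2$-point Julia lemma (Corollary~\ref{th:1.CP}) at the next boundary point. You instead pass directly to $g=\Delta_{\mathbf{O}_k}f=f(z)/z^k$ and run a one-shot Herglotz-measure argument: the Julia inequality at $\sigma_j$ forces the representing measure of $(1+g)/(1-g)$ to carry an atom of mass at least $1/\beta_g(\sigma_j)=1/(\beta_f(\sigma_j)-k)$ at $\sigma_j$, and distinct points contribute disjointly to the total mass $\Re\bigl((1+\alpha)/(1-\alpha)\bigr)$. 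This is essentially the original Cowen--Pommerenke technique; it is shorter and avoids the induction entirely, at the cost of importing the Herglotz representation theorem together with uniqueness of the representing measure, which the paper's argument deliberately avoids (it only needs that a function of positive real part is the Cayley transform of a self-map). All your intermediate claims check out: $g\in\Hol(\D,\D)$ with $|g(0)|<1$ because $f\notin\mathcal{B}_k$, and $\beta_g(\sigma_j)=\beta_f(\sigma_j)-k>0$ by iterating Proposition~\ref{th:1.beta} (this is \eqref{eq:2.Ok} in the paper). Two points in your equality analysis deserve a word more: that the rational inner function $g$ has degree \emph{exactly}~$n$ requires checking that numerator and denominator of $(\phi-1)/(\phi+1)$ share no factor (this is what Lemma~\ref{th:1.Bz} verifies, using that the residues $-2a_j\sigma_j$ are nonzero); and the converse "direct verification" amounts to the paper's observation that $F=\phi-\sum_j a_j(\sigma_j+z)/(\sigma_j-z)$ has vanishing real part on $\de\D$, hence $\Re F(0)=0$, i.e.\ the Clark-type measure of a degree-$n$ Blaschke product is purely atomic on the $n$ solutions of $g=1$ with masses $1/\beta_g(\sigma_j)$.
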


Summing up, these applications show that iterated hyperbolic difference quotients and multipoint Julia lemmas can be an useful tool for exploring in a systematic way the influence of higher order derivatives on the boundary behaviour of holomorphic self-maps of the unit disk.

This paper is organized as follows. In Section~\ref{sec:1} we shall collect a number of preliminary definitions and results that we shall need later on. In Section~\ref{sec:2} we shall discuss 2-point Julia lemmas, proving in particular Proposition~\ref{th:0.main}. Finally, in Section~\ref{sec:3} we shall introduce our general multipoint Julia lemma and its applications, proving in particular Theorems~\ref{th:0.mpJulia},~\ref{th:0.bassogen},~\ref{th:0.CPgenz} and Corollaries~\ref{th:0.bassot} and~\ref{th:0.CPgen}.

\medbreak

\noindent\emph{Ackowledgments.} This paper is respectfully dedicated to the memory of my advisor, Edoardo Vesentini, who, among (many) other things, showed me how beautiful and elegant complex analysis can be, in one, several and infinitely many variables. 

\section{Preliminaries}
\label{sec:1}

In this section we collect a few known results that shall be useful later on.

\subsection{Blaschke products}

\begin{Definition}
\label{def:I.2.Blaschke}
A (finite) \emph{Blaschke product} $B\in\Hol(\D,\D)$ is a holomorphic self-map of~$\D$ continuous up to the boundary
with $B(\de\D)\subseteq\de\D$. Since a Blaschke product $B$ cannot vanish in a neighbourhood of~$\de\D$ it must have a finite number $d\ge 0$ of zeroes in~$\D$, counted with respect to their multiplicity. The number $d$ is the \emph{degree} of~$B$.  We shall denote by $\mathcal{B}_d$ the set of Blaschke products of degree~$d\ge 1$, and by $\mathcal{B}_0$ the set of constant functions of modulus~1.
\end{Definition}

\begin{Lemma}
\label{th:I.2.Blaschke}
A function $B\in\Hol(\D,\D)$ is a Blaschke product of degree $d\ge 0$ if and only if there are $\theta\in\R$ and $a_1,\ldots, a_d\in\D$ such that
\begin{equation}
B(z)=e^{i\theta}\prod_{j=1}^d\frac{z-a_j}{1-\bar{a_j}z}\;.
\label{eq:1.B}
\end{equation}
In particular, if $\gamma\in\Aut(\D)$ then $B\circ\gamma$ and $\gamma\circ B$ are still Blaschke products of the same degree~$d$.
\end{Lemma}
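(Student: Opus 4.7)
The plan is to handle the "if" direction by direct verification, the "only if" direction via the maximum modulus principle, and then the composition statement by reducing it to these two together with a zero count.

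First I would dispatch the "if" direction. Each Blaschke factor $\varphi_a(z)=\frac{z-a}{1-\bar a z}$ with $a\in\D$ is a M\"obius automorphism of~$\D$, extends holomorphically past~$\overline{\D}$, and satisfies $|\varphi_a(z)|=1$ for $|z|=1$ since the identity $|1-\bar a z|=|z-a|$ on $\de\D$ is an immediate computation. Hence a finite product $e^{i\theta}\prod_{j=1}^d\varphi_{a_j}$ is a holomorphic self-map of~$\D$, continuous on~$\overline{\D}$, maps $\de\D$ into $\de\D$, and has exactly $d$ zeros in~$\D$ counted with multiplicity, so it satisfies Definition~\ref{def:I.2.Blaschke} with degree~$d$.

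For the "only if" direction, given $B\in\mathcal{B}_d$ I would enumerate its zeros $a_1,\ldots,a_d\in\D$ with multiplicity (a finite list because $B$ is continuous on the compact~$\overline{\D}$ and $|B|\equiv 1$ on~$\de\D$) and remove them by forming
\[
\tilde B(z):=B(z)\prod_{j=1}^d\frac{1-\bar a_j z}{z-a_j}\;.
\]
The orders of the zeros of~$B$ match the orders of the poles of the product at each~$a_j$, so the singularities are removable and $\tilde B$ is holomorphic and nowhere vanishing on~$\D$, continuous on~$\overline{\D}$, with $|\tilde B|\equiv 1$ on~$\de\D$. Applying the maximum modulus principle to $\tilde B$ and to $1/\tilde B$ then forces $\tilde B\equiv e^{i\theta}$ for some $\theta\in\R$, which yields~\eqref{eq:1.B}.

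For the final assertion, both $B\circ\gamma$ and $\gamma\circ B$ are holomorphic self-maps of~$\D$, continuous on~$\overline{\D}$, and send $\de\D$ into~$\de\D$ (using $\gamma(\de\D)=\de\D$), so by the first two parts they are themselves Blaschke products. Since $\gamma\colon\D\to\D$ is a bijection, the zeros of $B\circ\gamma$ are the $\gamma$-preimages of the zeros of~$B$, giving degree~$d$. For $\gamma\circ B$ the zeros form $B^{-1}(\gamma^{-1}(0))$; writing $B$ in the product form~\eqref{eq:1.B} just obtained, the argument principle applied to $B-c$ on~$\D$ (using $|B|=1$ on~$\de\D$) shows that $B$ attains each value $c\in\D$ exactly $d$ times counted with multiplicity, so again the degree is~$d$. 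The only step with genuine content is the maximum modulus argument producing~\eqref{eq:1.B}; the rest is bookkeeping, with the mild subtlety of counting interior preimages in the final step being handled cleanly by the argument principle.
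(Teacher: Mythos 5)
Your proof is correct and follows essentially the same route as the paper: direct verification of the boundary modulus for the ``if'' direction, and division by the product of the Blaschke factors built from the zeros followed by the maximum modulus principle applied to the quotient and its reciprocal for the ``only if'' direction. The only divergence is in counting the zeros of $\gamma\circ B$, where you invoke the argument principle (constancy of the winding number of $B(\de\D)$ about points of $\D$) to show $B$ takes every value of $\D$ exactly $d$ times, whereas the paper reduces $B(z)=\gamma^{-1}(0)$ to an explicit polynomial equation of degree exactly $d$; both are valid, and your version has the mild advantage of making it transparent that all $d$ solutions lie in $\D$.
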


\begin{proof}
Since
\[
\frac{\sigma-a}{1-\overline{a}\sigma}=\overline{\sigma}\frac{\sigma-a}{\overline{\sigma}-\overline{a}}\in\de\D
\]
for all $\sigma\in\de\D$ and $a\in\D$, it is clear that all maps of the form \eqref{eq:1.B} are Blaschke products of degree~$d$, with zeroes in~$a_1,\ldots,a_d$.

Conversely, assume that $B$ is a Blaschke product of degree~$d\ge 0$. If $d=0$ then
the maximum principle applied to $1/B$ implies that $|B|\equiv 1$, and hence $B\equiv e^{i\theta}$ for a suitable $\theta\in\R$.

Assume $d\ge 1$, let $a_1,\ldots,a_d\in\D$ be the zeroes of $B$, listed accordingly to their multiplicities, and put
\[
B_0(z)=\prod_{j=1}^d\frac{z-a_j}{1-\bar{a_j}z}\;.
\]
Then $B/B_0$ is holomorphic without zeroes in~$\D$ and $|B/B_0|=|B_0/B|\equiv 1$ on~$\de D$. By the maximum principle we get $|B/B_0|$, $|B_0/B|\le 1$,
and thus $B/B_0$ is a constant of modulus~1, as required.

If $\gamma\in\Aut(\D)$ then $B\circ\gamma$ obviously is a Blaschke product of the same degree. On the other hand, clearly $\gamma\circ B$ is still a Blaschke product. Moreover, if $a=\gamma^{-1}(0)$ then $(\gamma\circ B)(z)=0$ if and only if $B(z)=a$ if and only if
\[
e^{i\theta}\prod_{j=1}^d (z-a_j)=a\prod_{j=1}^d(1-\bar{a_j}z)\;.
\]
This is a polynomial equation of degree exactly $d$; thus $\gamma\circ B$ has exactly $d$ zeroes, counted with multiplicities, and we are done.
\end{proof}

In particular, the Blaschke products of degree~1 are exactly the automorphisms of~$\D$, that is $\mathcal{B}_1=\Aut(\D)$. If $w\in\D$ we shall denote by $\gamma_w$ the automorphism
\[
\gamma_w(z)=\frac{z-w}{1-\bar{w}z}\;.
\]

Later on we shall need the following 

\begin{Lemma}
\label{th:1.Bz}
Let $\sigma_1,\ldots,\sigma_n\in\de\D$ be distinct points, $a_1,\ldots,a_n\in\R^+$ and $B\in\mathcal{B}_d$ for some $n\ge 1$ and~$d\ge 0$. If $B\not\equiv 1$ define $h\colon\D\to\C$ by 
\[
\frac{1+h(z)}{1-h(z)}=\sum_{j=1}^n a_j\frac{\sigma_j+z}{\sigma_j-z}+\frac{1+B(z)}{1-B(z)}\;.
\]
Then $h\in\mathcal{B}_{n+d}$.
\end{Lemma}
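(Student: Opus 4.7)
\emph{Plan.} View the defining identity as $C(h)=F$, where $C(w):=(1+w)/(1-w)$ is the Cayley transform and
\[
F(z):=\sum_{j=1}^n a_j\frac{\sigma_j+z}{\sigma_j-z}+\frac{1+B(z)}{1-B(z)}\;,
\]
so that $h=(F-1)/(F+1)$. I would prove the lemma in two stages: first realise $h$ as a rational inner function via two elementary observations about $F$, and then pin down its degree through a pole count in $\hatC$.

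\emph{Realising $h$ as a Blaschke product.} Each summand of $F$ has strictly positive real part on $\D$: indeed $\Re\bigl(a_j(\sigma_j+z)/(\sigma_j-z)\bigr)=a_j(1-|z|^2)/|\sigma_j-z|^2>0$, and $\Re\bigl((1+B)/(1-B)\bigr)=(1-|B|^2)/|1-B|^2>0$ because $B\not\equiv 1$ together with the maximum principle force $|B|<1$ throughout $\D$. Hence $F$ maps $\D$ into the right half-plane, avoids $-1$, and $h=(F-1)/(F+1)$ is a well-defined holomorphic self-map of $\D$. Moreover $F$ is rational (a finite sum of rational pieces), so $h$ is rational. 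On $\de\D$, away from the finitely many boundary poles of $F$, every summand is purely imaginary: $(\sigma_j+\zeta)/(\sigma_j-\zeta)\in i\R$ is immediate from $|\zeta|=|\sigma_j|=1$, and $|B(\zeta)|=1$ gives $(1+B(\zeta))/(1-B(\zeta))\in i\R$. Therefore $F(\zeta)\in i\R$ and $|h(\zeta)|=1$; at the boundary poles of $F$ one has $F\to\infty$, so $h$ extends continuously with value $1$. Thus $h$ is a rational inner function, i.e., a Blaschke product by Lemma~\ref{th:I.2.Blaschke}.

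\emph{Counting the degree.} Since $h$ is the image of $F$ under a degree-one M\"obius transformation, $\deg h=\deg F$ as rational maps of $\hatC$. The first sum contributes $n$ simple poles at the distinct points $\sigma_1,\ldots,\sigma_n$, with nonzero residues $-2a_j\sigma_j$; the term $(1+B)/(1-B)$ contributes $d$ poles on $B^{-1}(1)\subset\de\D$, each of them simple because a Blaschke product $B$ satisfies the identity $\zeta\overline{B(\zeta)}B'(\zeta)=|B'(\zeta)|>0$ on $\de\D$, so $B'$ never vanishes there. The point $\infty$ is a regular value of $F$, so no further poles appear. The main obstacle is exactly this step: one has to verify that the positivity $a_j\in\R^+$ together with the positivity of the Blaschke boundary dilation coefficients prevents residues from cancelling at any coincidence $\sigma_j\in B^{-1}(1)$, so that a careful pole count gives precisely $\deg F=n+d$ and therefore $h\in\mathcal B_{n+d}$.
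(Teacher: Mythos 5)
Your first two stages are sound, and they are essentially a Cayley-transform repackaging of the paper's own argument: the paper likewise verifies $\Re\frac{1+h}{1-h}>0$ on $\D$ to get $h(\D)\subseteq\D$, and checks $|h|=1$ pointwise on $\de\D$ (treating the points $\sigma_j$ separately), which is what your observation $F(\de\D)\subseteq i\R\cup\{\infty\}$ delivers. The realisation of $h$ as a rational inner function, hence a Blaschke product, is correct.

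The degree count, however, contains a genuine error, and it sits exactly where you flag ``the main obstacle''. If some $\sigma_j\in B^{-1}(1)$, the two simple poles of the two summands at $\sigma_j$ merge into a \emph{single} simple pole of $F$: the residues are $-2a_j\sigma_j$ and $-2\sigma_j/|B'(\sigma_j)|$, and they indeed do not cancel, but non-cancellation only tells you that the merged pole survives with order one, so it contributes $1$ to the pole count rather than $2$. Hence $\deg F=n+d-\#\bigl(\{\sigma_1,\ldots,\sigma_n\}\cap B^{-1}(1)\bigr)$, and your conclusion $\deg F=n+d$ fails whenever a coincidence occurs. This is not a closable gap but an actual boundary case of the statement: taking $n=1$, $\sigma_1=1$, $a_1=1$, $B(z)=z$ gives $\frac{1+h(z)}{1-h(z)}=2\,\frac{1+z}{1-z}$, i.e.\ $h(z)=(1+3z)/(3+z)\in\Aut(\D)=\mathcal{B}_1$, not $\mathcal{B}_2$. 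The lemma therefore tacitly requires $\{\sigma_1,\ldots,\sigma_n\}\cap B^{-1}(1)=\void$; this holds in every application made in the paper, where $B$ is either a unimodular constant different from $1$ or an automorphism sending a point $\sigma_n$ distinct from $\sigma_1,\ldots,\sigma_{n-1}$ to $1$. (To be fair, the paper's own proof elides the same point by asserting that numerator and denominator of its explicit formula for $h$ have no common factor, which is false precisely at such coincidences.) With that extra hypothesis added, your pole count does give $n+d$ distinct simple poles of $F$, and the argument closes.
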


\begin{proof}
A quick computation yields
\[
h(z)=\frac{2B(z)+\bigl(1-B(z)\bigr)S(z)}{2+\bigl(1-B(z)\bigr)S(z)}\;,
\]
where 
\[
S(z)=\sum_{j=1}^n a_j\frac{\sigma_j+z}{\sigma_j-z}\;;
\]
in particular $h$ is a rational function of degree~$n+d$ because numerator and denominator have no common factors.

When $z\in\D$ we have
\[
\Re\frac{1+h(z)}{1-h(z)}=\sum_{j=1}^n a_j\Re\frac{\sigma_j+z}{\sigma_j-z}+\Re\frac{1+B(z)}{1-B(z)}=\sum_{j=1}^n a_j\frac{1-|z|^2}{|\sigma_j-z|^2}+\frac{1-|B(z)|^2}{|1-B(z)|^2}>0\;;
\]
this yields $h(z)\in\D$, and hence $h(\D)\subseteq\D$. 

If $\sigma\in\de\D$ is different from $\sigma_j$ we have
\[
\frac{\sigma_j+\sigma}{\sigma_j-\sigma}=2i\frac{\Im(\sigma\overline{\sigma_j})}{|\sigma_j-\sigma|^2}\in i\R\;;
\]
thus if $\sigma\ne\sigma_1,\ldots\sigma_n$ we have $S(\sigma)=ia\in i\R$ and hence setting $B(\sigma)=e^{i\phi}\in\de\D$ we have
\[
h(\sigma)=\frac{2e^{i\phi}+(1-e^{i\phi})ia}{2+(1-e^{i\phi})ia}\in\de\D\;.
\]
To deal with $\sigma=\sigma_j$ we write
\[
h(z)=\frac{a_j(\sigma_j+z)\bigl(1-B(z)\bigr)+(\sigma_j-z)\bigl(S_j(z)+2B(z)\bigr)}{a_j(\sigma_j+z)\bigl(1-B(z)\bigr)+(\sigma_j-z)\bigl(S_j(z)+2\bigr)}
\]
where $S_j(\sigma_j)=0$; therefore we get $h(\sigma_j)=1$, also when $B(\sigma_j)=1$. 

Summing up, we have proved that $h$ is a Blaschke product; being a rational function of degree~$n+d$ we get $h\in\mathcal{B}_{n+d}$, and we are done.
\end{proof}

\subsection{The hyperbolic difference quotient}

\begin{Definition}
Let $f\in\Hol(\D,\D)$ be a holomorphic self-map of the unit disk. The \emph{hyperbolic derivative} $f^h\colon\D\to\C$ of $f$ is given by
\[
f^h(z)=\frac{f'(z)}{1-|\,f(z)|^2}\!\bigg/\!\frac{1}{1-|z|^2}=f'(z)\frac{1-|z|^2}{1-|\,f(z)|^2}\;.
\]
The \emph{hyperbolic difference quotient} $f^*\colon\D\times\D\to\C$ is given by
\[
f^*(z,w)=
\begin{cases}
\frac{f(z)-f(w)}{1-\overline{f(w)}f(z)}\bigg/\frac{z-w}{1-\overline{w}z}&\mathrm{if}\ z\ne w\;;\\
f^h(z)&\mathrm{if}\ z=w\;.
\end{cases}
\]
\end{Definition}

It is easy to check that for every $w\in\D$ the function $z\mapsto f^*(z,w)$ is holomorphic. Furthermore, the Schwarz-Pick lemma implies that $|f^*(z,w)|\le 1$ always,
and that there exists $(z_0,w_0)\in\D\times\D$ such that $|f^*(z_0,w_0)|=1$ if and only if $|f^*|\equiv 1$ if and only if  $f\in\Aut(\D)$. In particular, if $\gamma\in\Aut(\D)$ is given by
\[
\gamma(z)=e^{i\theta}\frac{z-a}{1-\bar{a}z}
\]
then it is easy to check that 
\[
\gamma^*(z,w)= e^{i\theta}\frac{1-\bar{a}w}{1-a\bar{w}}\;.
\]
This can be seen as a particular case of the following result:

\begin{Proposition}
\label{th:1.Blhdq}
Let $f\in\Hol(\D,\D)$ and $d\ge 1$. Then $f\in\mathcal{B}_d$ if and only if $f^*(\cdot,w)\in\mathcal{B}_{d-1}$ for all $w\in\D$ if and only if $f^*(\cdot,w_0)\in\mathcal{B}_{d-1}$ for some $w_0\in\D$.
\end{Proposition}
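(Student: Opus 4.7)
The plan is to argue in a circle: the implication from $f\in\mathcal{B}_d$ to ``$f^*(\cdot,w)\in\mathcal{B}_{d-1}$ for all $w$'' is the content, and from ``for some $w_0$'' we close back to $f\in\mathcal{B}_d$.

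First I would translate the hyperbolic difference quotient into the algebraic identity
\[
\gamma_{f(w)}\bigl(f(z)\bigr)=f^*(z,w)\,\gamma_w(z)\;,
\]
valid for all $z\in\D$ (the equality at $z=w$ holding by the definition of $f^*$). This is the single relation on which every implication rests.

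Assume now $f\in\mathcal{B}_d$ and fix $w\in\D$. By Lemma~\ref{th:I.2.Blaschke}, $g:=\gamma_{f(w)}\circ f$ is a Blaschke product of degree~$d$, and by construction $g(w)=0$. Writing $g$ in its canonical form and isolating the factor $\gamma_w$ corresponding to the zero at $w$, we get $g(z)=\gamma_w(z)\,\tilde B(z)$ with $\tilde B\in\mathcal{B}_{d-1}$. Dividing by $\gamma_w$ in the identity above yields $f^*(\cdot,w)=\tilde B\in\mathcal{B}_{d-1}$; this proves the first implication (and also handles the case $d=1$, where $\mathcal{B}_0$ consists of unimodular constants, consistently with the explicit formula for $\gamma^*$ already recorded in the paper).

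The implication ``for all $w$ $\Rightarrow$ for some $w_0$'' is trivial, so I only need to prove that if $f^*(\cdot,w_0)\in\mathcal{B}_{d-1}$ for some $w_0\in\D$ then $f\in\mathcal{B}_d$. Multiplying $f^*(\cdot,w_0)$ by $\gamma_{w_0}\in\mathcal{B}_1$ in the same identity gives $\gamma_{f(w_0)}\circ f=f^*(\cdot,w_0)\,\gamma_{w_0}$. The product of two finite Blaschke products is again a Blaschke product whose degree is the sum of the degrees (immediate from the canonical form of Lemma~\ref{th:I.2.Blaschke}, since numerator and denominator just concatenate), hence $\gamma_{f(w_0)}\circ f\in\mathcal{B}_d$. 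Applying Lemma~\ref{th:I.2.Blaschke} once more to post-compose with $\gamma_{f(w_0)}^{-1}\in\Aut(\D)=\mathcal{B}_1$ preserves the degree, so $f\in\mathcal{B}_d$.

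The only mildly delicate point I foresee is the degree bookkeeping in the forward direction: one must be sure that the zero of $\gamma_{f(w)}\circ f$ at $z=w$ is simple (guaranteed because $\gamma_{f(w)}$ is an automorphism, so the zeroes of $\gamma_{f(w)}\circ f$ are exactly the $f$-preimages of $f(w)$ with the same multiplicity), and that $f^*(\cdot,w)$ has no unnoticed pole hidden in the factor $1-\overline{f(w)}f(z)$; but both issues dissolve once one uses the identity $\gamma_{f(w)}\circ f = f^*(\cdot,w)\,\gamma_w$ directly, since the right-hand side is manifestly holomorphic on $\D$. No serious obstacle beyond that.
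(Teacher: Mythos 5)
Your proposal is correct and follows essentially the same route as the paper: both rest on the identity $\gamma_{f(w)}(f(z))=f^*(z,w)\,\gamma_w(z)$, factor out the zero of $\gamma_{f(w)}\circ f$ at $w$ for the forward implication, and use multiplicativity of Blaschke degrees together with Lemma~\ref{th:I.2.Blaschke} for the converse. The extra care you take about the simplicity of the zero at $w$ and the absence of poles is sound but, as you note yourself, already absorbed by the identity.
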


\begin{proof}
By definition we have
\begin{equation}
\gamma_{w}(z) f^*(z,w)=\gamma_{f(w)}\bigl(f(z)\bigr)
\label{eq:I.2.BB}
\end{equation}
for all $z$, $w\in\D$.
If $f^*(\cdot,w_0)\in\mathcal{B}_{d-1}$ for some $w_0\in\D$ then $B=\gamma_{w_0}f^*(\cdot,w_0)$ is a Blaschke product of degree~$d$, and thus Lemma~\ref{th:I.2.Blaschke} implies that $f=\gamma^{-1}_{f(w_0)}\circ B$ is a Blaschke product of degree~$d$.

Conversely, if $f$ is a Blaschke product of degree~$d$, then for any $w\in\D$ we have that $\gamma_{f(w)}\circ f$ is a Blaschke product of degree~$d$ vanishing at~$w$.
Therefore $\gamma_w$ is a factor of $\gamma_{f(w)}\circ f$, and \eqref{eq:I.2.BB} implies that $f^*(\cdot,w)$ is a Blaschke product of degree~$d-1$.
\end{proof}

%
%
%

\subsection{The classical Julia lemma}

\begin{Definition}
\label{def:I.2.1.horo}
The 
\emph{horocycle}~$E(\sigma,R)\subset\D$ of \emph{center}~$\sigma\in\partial\D$ and 
\emph{radius}~$R>0$ is given by 
\begin{equation}
E(\sigma,R)=\biggl\{z\in\D\biggm|\frac{|\sigma-z|^2}{1-|z|^2}<R\biggr\}\;.
\label{eq:I.2.1.horo}
\end{equation}
Geometrically, $E(\sigma,R)$ is 
the euclidean disk of radius~$R/(R+1)$ internally
tangent to~$\partial\D$ in~$\sigma$.
\end{Definition}

\begin{Definition}
\label{def:I.2.bdc}
Given $f\in\Hol(\D,\D)$ and $\sigma$,~$\tau\in\de\D$, set
\begin{equation}
\beta_f(\sigma,\tau)=\sup_{z\in\D}\biggl\{\frac{|\tau-f(z)|^2}{
	1-|f(z)|^2}\biggm/\frac{|\sigma-z|^2}{1-|z|^2}\biggr\}\;.
\label{eq:I.2.unouno}
\end{equation}
The \emph{boundary dilation coefficient\/} of~$f$ at~$\sigma$ is given by
\begin{equation}
\beta_f(\sigma)=\inf_{\tau\in\de\D}\beta_f(\sigma,\tau)\in[0,+\infty]\;.
\label{eq:I.2.ora}
\end{equation}
\end{Definition}

\begin{Remark}
\label{rem:I.2.bdc}
By definition 
\[
\forevery{R>0}  f\bigl(E(\sigma,R)\bigr)\subseteq E\bigl(\tau,\beta_f(\sigma,\tau)R\bigr)\;.
\]
In particular, for every~$f\in\Hol(\D,\D)$ and~$\sigma\in\de\D$ there is at most 
one point~$\tau\in\de\D$ such that $\beta_f(\sigma,\tau)$ is finite. Indeed, if we had $\beta_f(\sigma,\tau_j)<+\infty$ for two distinct points $\tau_1$,~$\tau_2\in\de\D$ 
we would get a contradiction choosing $R$ so small that 
\[
E(\tau_1,\beta R)\cap E(\tau_2,\beta R)=\void\;,
\] 
where $\beta=\max\{\beta_f(\sigma,\tau_1),\beta_f(\sigma,\tau_2)\}$.
\end{Remark}

The following well-known result gives us an alternative way to compute the boundary dilation coefficient (for a proof see, e.g., \cite{Abate}*{Proposition~1.2.6}):

\begin{Proposition} 
\label{th:I.2.sei} 
Take $f\in\Hol(\D,\D)$ and $\sigma\in\de
\D$. Then 
\[
\beta_f(\sigma)=\liminf_{z\to\sigma}\frac{1-|f(z)|}{1-|z|}\;.
\]
\end{Proposition}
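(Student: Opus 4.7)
Denote $L=\liminf_{z\to\sigma}\frac{1-|f(z)|}{1-|z|}\in[0,+\infty]$. My plan is to establish $\beta_f(\sigma)\le L$ and $L\le\beta_f(\sigma)$ separately, since neither direction is immediate from the horocyclic definition of $\beta_f(\sigma,\tau)$.

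For the substantive direction $\beta_f(\sigma)\le L$, I would assume $L<+\infty$ (otherwise the inequality is trivial) and pick a sequence $\{z_n\}\subset\D$ with $z_n\to\sigma$ and $\frac{1-|f(z_n)|}{1-|z_n|}\to L$. The boundedness of the ratio together with $1-|z_n|\to 0$ forces $|f(z_n)|\to 1$, so after extracting a subsequence I may assume $f(z_n)\to\tau$ for some $\tau\in\partial\D$. The key tool is the Schwarz-Pick inequality for the pair $(z,z_n)$, expanded in its Euclidean form
\[
\frac{|1-\overline{f(z_n)}f(z)|^2}{1-|f(z)|^2}\le\frac{1-|f(z_n)|^2}{1-|z_n|^2}\cdot\frac{|1-\bar z_n z|^2}{1-|z|^2}.
\]
Holding $z\in\D$ fixed and letting $n\to\infty$, the left-hand side tends to $\frac{|\tau-f(z)|^2}{1-|f(z)|^2}$ (using $|\tau|=1$), the first factor on the right converges to $L$ (because $\frac{1+|f(z_n)|}{1+|z_n|}\to 1$), and $|1-\bar z_n z|\to|1-\bar\sigma z|=|\sigma-z|$. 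Comparing with definition \eqref{eq:I.2.unouno} this yields $\beta_f(\sigma,\tau)\le L$, and a fortiori $\beta_f(\sigma)\le L$.

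For the opposite inequality I may assume $\beta_f(\sigma)<+\infty$ and choose $\tau\in\partial\D$ with $\beta_f(\sigma,\tau)<+\infty$, available by Remark~\ref{rem:I.2.bdc}. Restricting the defining inequality of $\beta_f(\sigma,\tau)$ to the radial approach $z=r\sigma$ and applying the lower bound $|\tau-f(r\sigma)|\ge 1-|f(r\sigma)|$ gives
\[
\frac{1-|f(r\sigma)|}{1+|f(r\sigma)|}\le\beta_f(\sigma,\tau)\,\frac{1-r}{1+r}.
\]
As $r\to 1^-$ the right side vanishes, forcing $|f(r\sigma)|\to 1$; rewriting the inequality as $\frac{1-|f(r\sigma)|}{1-r}\le\beta_f(\sigma,\tau)\frac{1+|f(r\sigma)|}{1+r}$ and taking the limsup gives $\limsup_{r\to 1^-}\frac{1-|f(r\sigma)|}{1-r}\le\beta_f(\sigma,\tau)$. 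Since $L$, being a liminf over all approaches, is bounded by the liminf (and hence the limsup) along the radial direction, $L\le\beta_f(\sigma,\tau)$; taking the infimum over $\tau$ closes the argument.

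The main subtlety is the first direction: the assumption $L<+\infty$ is precisely what prevents the cluster point of $\{f(z_n)\}$ from escaping into $\D$, ensuring $|\tau|=1$ and preserving the horocyclic interpretation of $\frac{|\tau-f(z)|^2}{1-|f(z)|^2}$. Without it, the limit $\tau$ could lie in the interior, and the limiting inequality would carry no information about the boundary dilation coefficient.
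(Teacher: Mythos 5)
Your proof is correct: both directions check out, including the two genuinely delicate points (extracting a boundary cluster point $\tau$ of $f(z_n)$, which requires $L<+\infty$, and passing to the limit in the Euclidean form of the Schwarz--Pick inequality). Note that the paper does not prove this proposition at all --- it only cites \cite{Abate}*{Proposition~1.2.6} --- and your argument is essentially the standard one found there (and in Julia's original work): Schwarz--Pick along a minimizing sequence for one inequality, radial restriction of the horocyclic estimate for the other. Nothing to fix.
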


Furthermore (see, e.g., \cite{Abate}*{Lemma~1.2.4}):

\begin{Lemma} 
\label{th:I.2.quattro} Let $f\colon\D\to\D$ be holomorphic. Then
\begin{equation}
\forevery{z\in\D}\frac{1-|f(z)|}{1-|z|}\ge\frac{1-|f(0)|}{1+|f(0)|}>0\;.
\label{eq:I.2.bassob}
\end{equation}
In particular, for all $\sigma\in\de\D$ we have
\begin{equation}
\beta_f(\sigma)\ge \frac{1-|f(0)|}{1+|f(0)|}>0\;.
\label{eq:I.2.basso}
\end{equation}
Moreover, equality in \eqref{eq:I.2.bassob} holds at one point $z_0\ne 0$ (and hence everywhere) 
if and only if $f(z)=e^{i\theta}z$ for a suitable~$\theta\in\R$.
\end{Lemma}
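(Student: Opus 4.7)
The plan is to apply the classical Schwarz-Pick lemma with $w=0$. Writing $a=f(0)$, this yields $\omega(f(z),a)\le\omega(z,0)=\tanh^{-1}|z|$, where $\omega$ denotes the Poincar\'e distance. I would then combine this with the triangle inequality $\omega(f(z),0)\le\omega(f(z),a)+\omega(a,0)$ and pass through $\tanh$ (using the addition formula $\tanh(u+v)=(\tanh u+\tanh v)/(1+\tanh u\tanh v)$) to obtain the pointwise bound
\[
|f(z)|\le\frac{|z|+|a|}{1+|a||z|}.
\]

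Subtracting from $1$ and simplifying turns this into
\[
1-|f(z)|\ge\frac{(1-|z|)(1-|a|)}{1+|a||z|}\ge\frac{(1-|z|)(1-|a|)}{1+|a|},
\]
which is \eqref{eq:I.2.bassob} after dividing by $1-|z|$; positivity of the right-hand side is automatic since $|a|<1$. The boundary estimate \eqref{eq:I.2.basso} is then a one-line consequence of Proposition~\ref{th:I.2.sei}, passing to the $\liminf$ as $z\to\sigma$.

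For the equality case, I would trace back through the two inequalities above. Equality in the elementary majorization $1+|a||z_0|\le 1+|a|$ forces $|a|(1-|z_0|)=0$, so $|a|=0$, i.e., $f(0)=0$; and with this information in hand the other equality reads simply $|f(z_0)|=|z_0|$, so the classical Schwarz lemma yields $f(z)=e^{i\theta}z$. The converse is of course trivial.

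No step presents a genuine obstacle; the only point requiring a moment's care is recognizing that the equality analysis splits into two separate constraints, one of which forces $a=0$ before the Schwarz rigidity statement can be invoked.
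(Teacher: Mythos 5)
Your proof is correct. Note that the paper does not actually prove this lemma---it quotes it from \cite{Abate}*{Lemma~1.2.4}---but your argument (Schwarz--Pick at the origin plus the triangle inequality for the Poincar\'e distance to get $|f(z)|\le(|z|+|f(0)|)/(1+|f(0)||z|)$, followed by the two-step equality analysis that first forces $f(0)=0$ and then invokes the Schwarz lemma) is the standard one and is sound in every step.
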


We can now state the classical Julia lemma~\cite{Julia}:

\begin{Theorem}[Julia lemma]
\label{th:I.2.cinque} 
Let $f\in\Hol(\D,\D)$, and choose $\sigma\in\de\D$ so that $\beta_f(\sigma)<+\infty$. 
Let $\tau\in\de\D$ be the unique point of~$\de\D$ such that $\beta_f(\sigma, \tau)<+\infty$. Then 
\begin{equation}
\frac{|\tau-f(z)|^2}{1-|f(z)|^2}\le\beta_f(\sigma)\frac{|\sigma-z|^2}{1-|z|^2}\;,
\label{eq:I.2.nove}
\end{equation}
that is 
\begin{equation}
\forevery{R>0}f\bigl(E(\sigma,R)\bigr)\subseteq E(\tau,\beta_f(\sigma) R)\;.
	\label{eq:I.2.anche}
\end{equation} 
Moreover, equality in \eqref{eq:I.2.nove} holds at one point (and hence everywhere) if and only if 
$f\in\Aut(\D)$.
\end{Theorem}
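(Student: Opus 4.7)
My plan is to deduce the horocycle inequality \eqref{eq:I.2.nove} by applying the Schwarz--Pick lemma along a boundary sequence that realizes the $\liminf$ characterization of $\beta_f(\sigma)$ provided by Proposition~\ref{th:I.2.sei}, and then to handle the rigidity via the maximum principle for harmonic functions.

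Using Proposition~\ref{th:I.2.sei}, I would choose $\{z_n\}\subset\D$ with $z_n\to\sigma$ and $(1-|f(z_n)|)/(1-|z_n|)\to\beta_f(\sigma)$. Since $\beta_f(\sigma)<+\infty$ and $1-|z_n|\to 0$, this forces $|f(z_n)|\to 1$, so up to a subsequence $f(z_n)\to\tau$ for some $\tau\in\de\D$. Writing the Schwarz--Pick lemma in its symmetric form
\[
\frac{(1-|f(z)|^2)(1-|f(w)|^2)}{|1-\overline{f(w)}f(z)|^2}\ge\frac{(1-|z|^2)(1-|w|^2)}{|1-\bar w z|^2}\;,
\]
setting $w=z_n$, dividing through by $1-|z_n|^2$, and letting $n\to\infty$, the ratio $(1-|f(z_n)|^2)/(1-|z_n|^2)$ tends to $\beta_f(\sigma)$ (because $(1+|f(z_n)|)/(1+|z_n|)\to 1$), while $|1-\overline{f(z_n)}f(z)|\to|\tau-f(z)|$ and $|1-\bar{z_n}z|\to|\sigma-z|$. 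Rearranging yields \eqref{eq:I.2.nove}, and hence $\beta_f(\sigma,\tau)\le\beta_f(\sigma)$; the reverse inequality is immediate from \eqref{eq:I.2.ora}, so $\beta_f(\sigma)=\beta_f(\sigma,\tau)$. The uniqueness of such $\tau$ (and thus the irrelevance of the subsequence chosen) is already contained in Remark~\ref{rem:I.2.bdc}.

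For the rigidity statement, after replacing $f$ by $\bar\tau f(\sigma\,\cdot\,)$ (which preserves $\beta_f(\sigma)$ and the form of \eqref{eq:I.2.nove}) I would assume $\sigma=\tau=1$. Using the identity $|1-z|^2/(1-|z|^2)=1/\Re\frac{1+z}{1-z}$, inequality \eqref{eq:I.2.nove} rewrites as
\[
\Re\!\left(\frac{1+f(z)}{1-f(z)}-\frac{1}{\beta_f(\sigma)}\frac{1+z}{1-z}\right)\ge 0\;,
\]
a non-negative harmonic function on $\D$. If equality occurs at some interior $z_0$, this harmonic function attains an interior minimum, so by the maximum principle it vanishes identically; the expression inside $\Re$ is therefore a purely imaginary constant, forcing $f$ to be a M\"obius transformation preserving $\D$, i.e.\ an element of $\Aut(\D)$. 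The converse (equality when $f\in\Aut(\D)$) is a direct computation, since an automorphism carries horocycles to horocycles with scaling exactly $\beta_f(\sigma)$. The main obstacle I expect is this rigidity step: the key insight is to recast \eqref{eq:I.2.nove} as an actual positivity statement for a harmonic function after the normalization $\sigma=\tau=1$, which is what makes the maximum principle applicable and transparent.
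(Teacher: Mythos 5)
Your proposal is correct. Note that the paper does not actually prove this statement: it is quoted as the classical Julia lemma with a reference to the literature, so there is no in-paper argument to compare against. Your two steps are both sound and are essentially the standard proof: the symmetric form of Schwarz--Pick evaluated at $w=z_n$ along a sequence realizing the $\liminf$ of Proposition~\ref{th:I.2.sei} gives \eqref{eq:I.2.nove} (the limits $|1-\overline{f(z_n)}f(z)|\to|\tau-f(z)|$ and $(1-|f(z_n)|^2)/(1-|z_n|^2)\to\beta_f(\sigma)$ are justified since $|f(z_n)|\to1$ and $(1+|f(z_n)|)/(1+|z_n|)\to1$), and the rigidity via the identity $\Re\frac{1+z}{1-z}=\frac{1-|z|^2}{|1-z|^2}$ and the minimum principle for the nonnegative harmonic function $\Re\bigl(\frac{1+f(z)}{1-f(z)}-\frac1\beta\frac{1+z}{1-z}\bigr)$ correctly forces $\frac{1+f}{1-f}$ to be an affine image of $\frac{1+z}{1-z}$ mapping $\D$ onto the right half-plane, hence $f\in\Aut(\D)$; this Herglotz-type reformulation is moreover exactly the device the paper itself uses later in the proofs of Proposition~\ref{th:1.CPtrue} and Theorem~\ref{th:2.CPgenz}.
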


As a consequence we have another way for computing the boundary dilation coefficient:

\begin{Corollary}
\label{th:1.bdc2}
Take $f\in\Hol(\D,\D)$ and $\sigma\in\de
\D$. Then
\[
\beta_f(\sigma)=\lim_{r\to1^-}\frac{1-|f(r\sigma)|}{1-r}=\lim_{r\to1^-}\frac{1-|f(r\sigma)|^2}{1-r^2}\;.
\]
\end{Corollary}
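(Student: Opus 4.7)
My plan is to use the characterization of $\beta_f(\sigma)$ as a $\liminf$ (Proposition~\ref{th:I.2.sei}) to get a lower bound along the radius for free, and to use Julia's lemma (Theorem~\ref{th:I.2.cinque}) to get the matching upper bound. First I would dispose of the trivial case $\beta_f(\sigma)=+\infty$: since $\liminf_{z\to\sigma}(1-|f(z)|)/(1-|z|)=+\infty$, a fortiori the corresponding $\liminf$ along the radial segment $r\sigma$ is $+\infty$, so the radial limit is $+\infty$. For the second expression, the factorization
\[
\frac{1-|f(r\sigma)|^2}{1-r^2}=\frac{1-|f(r\sigma)|}{1-r}\cdot\frac{1+|f(r\sigma)|}{1+r}
\]
and the elementary bound $\frac{1+|f(r\sigma)|}{1+r}\ge\frac12$ imply that this quantity also tends to $+\infty$.

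So assume $\beta_f(\sigma)<+\infty$, and let $\tau\in\de\D$ be the (unique) point provided by Julia's lemma. Specializing \eqref{eq:I.2.nove} to $z=r\sigma$ gives
\[
\frac{|\tau-f(r\sigma)|^2}{1-|f(r\sigma)|^2}\le\beta_f(\sigma)\frac{1-r}{1+r}.
\]
The right-hand side tends to $0$ as $r\to 1^-$, forcing $f(r\sigma)\to\tau$ and in particular $|f(r\sigma)|\to 1$. Using $|\tau-f(r\sigma)|\ge 1-|f(r\sigma)|$ on the left and simplifying, I obtain
\[
\frac{1-|f(r\sigma)|}{1-r}\le\beta_f(\sigma)\,\frac{1+|f(r\sigma)|}{1+r},
\]
whose $\limsup$ as $r\to 1^-$ is bounded by $\beta_f(\sigma)$ (since the right-hand factor tends to $1$).

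Combining this with the inequality
\[
\beta_f(\sigma)=\liminf_{z\to\sigma}\frac{1-|f(z)|}{1-|z|}\le\liminf_{r\to 1^-}\frac{1-|f(r\sigma)|}{1-r}
\]
(which is immediate from Proposition~\ref{th:I.2.sei} because the radial approach is a special case of the general one), I conclude that the radial limit exists and equals $\beta_f(\sigma)$. The second identity then follows by multiplying by $(1+|f(r\sigma)|)/(1+r)$, a factor that converges to $1$ because $|f(r\sigma)|\to 1$.

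There is no real obstacle here; the statement is essentially a packaging of Julia's lemma along a radius. The only point requiring a moment of care is ensuring $|f(r\sigma)|\to 1$ so that the bound $1+|f(r\sigma)|\to 2$ can be used to pass from the estimate on $(1-|f(r\sigma)|^2)/(1-|f(r\sigma)|)^2$-type quantities back to the ratio $(1-|f(r\sigma)|)/(1-r)$, and this is exactly what the horocycle-containment conclusion of Julia's lemma guarantees.
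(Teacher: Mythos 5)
Your proposal is correct and follows essentially the same route as the paper: the lower bound comes from the $\liminf$ characterization of $\beta_f(\sigma)$ (Proposition~\ref{th:I.2.sei}) and the matching upper bound from Julia's lemma evaluated at $z=r\sigma$, with the second equality reduced to the first via the factor $(1+|f(r\sigma)|)/(1+r)$. The only cosmetic difference is that the paper extracts the upper bound from the Euclidean description of the horocycle $E(\tau,\beta_f(\sigma)\frac{1-r}{1+r})$, whereas you manipulate the defining inequality directly using $|\tau-w|\ge 1-|w|$; both yield $\limsup_{r\to1^-}(1-|f(r\sigma)|)/(1-r)\le\beta_f(\sigma)$.
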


\begin{proof}
By Proposition~\ref{th:I.2.sei} we have
\[
\beta_f(\sigma)\le\liminf_{r\to1^-}\frac{1-|f(r\sigma)|}{1-r}\;;
\]
in particular the first equality is proven when $\beta_f(\sigma)=+\infty$. Assume then that $\beta_f(\sigma)<+\infty$. An easy computation shows that
$r\sigma\in\de E\bigl(\sigma,\frac{1-r}{1+r}\bigr)$; therefore Theorem~\ref{th:I.2.cinque} yields $f(r\sigma)\in \overline{E\bigl(\tau,\beta_f(\sigma)\frac{1-r}{1+r}\bigr)}$
for a suitable $\tau\in\de\D$. Since $E(\tau,R)$ is an Euclidean disk of radius~$R/(R+1)$ internally tangent to~$\de\D$ in~$\tau$ it follows that
\[
1-|f(r\sigma)|\le|\tau-f(r\sigma)| \le 2\frac{\beta_f(\sigma)\frac{1-r}{1+r}}{1+\beta_f(\sigma)\frac{1-r}{1+r}}\;.
\]
Therefore
\[
\limsup_{r\to 1^-}\frac{1-|f(r\sigma)|}{1-r}\le\limsup_{r\to 1^-}\frac{2\beta_f(\sigma)}{1+r+\beta_f(\sigma)(1-r)}=\beta_f(\sigma)
\]
and the first equality is proved.

To prove the second equality, first of all notice that 
\begin{equation}
\frac{2}{1+r}\frac{1-|f(r\sigma)|}{1-r}\ge \frac{1+|f(r\sigma)|}{1+r}\frac{1-|f(r\sigma)|}{1-r}=\frac{1-|f(r\sigma)|^2}{1-r^2}\ge\frac{1}{1+r}\frac{1-|f(r\sigma)|}{1-r} \;.
\label{eq:1.bdcsquare}
\end{equation}
From this the second equality immediately follows when $\beta_f(\sigma)=+\infty$. If $\beta_f(\sigma)<+\infty$ then $|f(r\sigma)|\to 1$ as $r\to 1^-$, 
and thus the assertion follows again from \eqref{eq:1.bdcsquare}.
\end{proof}

%

\begin{Definition}
\label{def:I.2.Stolz}
Given $\tau\in\partial\D$ and $M>0$, the \emph{Stolz region\/} $K(\tau,
M)$ of \emph{vertex}~$\tau$ and \emph{amplitude}~$M$ is
\begin{equation}
K(\tau,M)=\biggl\{z\in\D\biggm|\frac{|\tau-z|}{1-|z|}<M\biggl\}\;.
	\label{eq:I.2.Stolzo}
\end{equation}
Note that $K(\tau,M)=\void$ if $M\le1$, for $|\tau-z|\ge1-|z|$.
\end{Definition}

\begin{Definition}
\label{def:I.2.ntlimit}
We say that a function $f\colon\D\to\hatC$ has \emph{non-tangential} (or \emph{angular})
limit~$c\in\hatC$ at~$\sigma\in\de\D$ if $f(z)\to c$ as $z$~tends to~$\sigma$ 
within~$K(\sigma,M)$ for any~$M>1$. When this happen we shall write
\[
\Klim_{z\to\sigma}f(z)=c
\]
and denote ~$c$ by~$f(\sigma)$.
\end{Definition}

We end this preliminary section recalling the famous Julia-Wolff-Carath\'eodory theorem~\cites{Wolff, Caratheodorya}; for a proof see, e.g., \cite{Abate}*{Theorem~1.2.7}.


\begin{Theorem}[Julia-Wolff-Carath\'eodory; 1926] 
\label{th:I.2.dieci} 
Let $f\in\Hol(\D,\D)$ and $\tau$,~$\sigma\in
\partial\D$. Then
\begin{equation}
\Klim_{z\to\sigma}\frac{\tau-f(z)}{\sigma-z}=\tau\bar\sigma\beta_f
	(\sigma,\tau)\;.
	\label{eq:I.2.JC}
\end{equation}
If this non-tangential limit is finite then $\beta_f(\sigma)=\beta_f(\sigma,\tau)<+\infty$, the function $f$ has non-tangential limit $\tau$ at~$\sigma$ and
\begin{equation}
\Klim_{z\to\sigma}f^\prime(z)=\tau\bar\sigma\beta_f(\sigma)\;.
	\label{eq:I.2.JCdue}
\end{equation}
\end{Theorem}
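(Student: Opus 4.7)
My plan is to rotate so that $\sigma=\tau=1$ (via $\tilde f(z)=\bar\tau f(\sigma z)$, for which $\beta_{\tilde f}(1,1)=\beta_f(\sigma,\tau)$) and then to transport everything to the right half-plane through the Cayley transform $\phi(z)=(1+z)/(1-z)$. The composition $F=\phi\circ\tilde f\circ\phi^{-1}$ is a holomorphic self-map of $H=\phi(\D)$, and the identity $|1-z|^2/(1-|z|^2)=1/\Re\phi(z)$ turns horocycles at~$1$ into right half-planes. Writing $\beta=\beta_f(1,1)$, the two assertions of the theorem translate into (i)~$F(w)/w\to 1/\beta$ as $w\to\infty$ non-tangentially in $H$, and (ii)~$F'(w)\to 1/\beta$ non-tangentially when $\beta<+\infty$; the chain rule $F'(w)=[(1-z)/(1-f(z))]^2f'(z)$ combined with~(i) then recovers the disk statement.

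Assume first $\beta<+\infty$. Julia's lemma (Theorem~\ref{th:I.2.cinque}) translates into the one-sided inequality $\Re F(w)\ge\Re w/\beta$, so $P(w):=F(w)-w/\beta$ maps $H$ into the closed right half-plane. Thus $P$ belongs to the Herglotz-Nevanlinna class, and the Riesz-Herglotz representation shows that $P(w)/w$ and $P'(w)$ both admit non-tangential limits at infinity, equal to the same non-negative real coefficient $a$ of the linear term in the integral representation of~$P$. On the other hand, Corollary~\ref{th:1.bdc2}, transferred via~$\phi$, identifies the \emph{radial} limit $F(u)/u\to 1/\beta$ as $u\to+\infty$ along the positive real axis, whence $P(u)/u\to 0$ radially; this pins down $a=0$, and hence both $P(w)/w\to 0$ and $P'(w)\to 0$ non-tangentially. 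Unwinding the Cayley transform yields the K-limits $(1-\tilde f(z))/(1-z)\to\beta$ and $\tilde f'(z)\to\beta$ at~$1$, which after reinstating $\sigma$ and $\tau$ is exactly \eqref{eq:I.2.JC} and~\eqref{eq:I.2.JCdue} in the finite case.

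The case $\beta_f(\sigma,\tau)=+\infty$ is settled by contraposition: if the K-limit $L$ of $(\tau-f(z))/(\sigma-z)$ at~$\sigma$ were finite, one would have $|\tau-f(z)|\le(|L|+1)|\sigma-z|\le(|L|+1)M(1-|z|)$ on some Stolz region $K(\sigma,M)$ near~$\sigma$, so $(1-|f(z)|)/(1-|z|)\le(|L|+1)M$ there, and Proposition~\ref{th:I.2.sei} combined with Remark~\ref{rem:I.2.bdc} would force $\tau$ to be the unique direction with $\beta_f(\sigma,\tau)=\beta_f(\sigma)<+\infty$, contradicting $\beta_f(\sigma,\tau)=+\infty$.

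The main technical obstacle is identifying $a=0$ in the Herglotz representation of~$P$, which is the bridge connecting Julia's one-sided inequality to the sharp asymptotic rate $F(w)/w\to 1/\beta$. The Riesz-Herglotz representation (or, equivalently, a Lindel\"of theorem for bounded holomorphic functions on Stolz sectors applied to a suitable rescaling of~$P$) does two things at once: it upgrades the radial limit supplied by Corollary~\ref{th:1.bdc2} to a non-tangential one, and it simultaneously delivers the non-tangential limit of the derivative~$F'$. Everything else is bookkeeping with the Cayley transform, the chain rule, and elementary estimates on Stolz regions.
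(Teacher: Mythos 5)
The paper does not actually prove this theorem: it is quoted as a classical result, with the proof delegated to \cite{Abate}*{Theorem~1.2.7}. So there is no internal argument to compare against, and your proposal should be judged as a self-contained proof. As such, it is essentially correct and follows one of the standard routes (transfer to the half-plane, observe that Julia's inequality makes $P(w)=F(w)-w/\beta$ a Herglotz function, and read off both the non-tangential limit of $P(w)/w$ and of $P'(w)$ from the Riesz--Herglotz representation). This is a legitimate and arguably cleaner proof than the more hands-on Stolz-region estimates one often sees; the Herglotz representation is doing exactly the work of Lindel\"of's theorem plus the derivative statement in one stroke.

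Two points need tightening. First, Corollary~\ref{th:1.bdc2} controls only $|f(r\sigma)|$, so it does not directly give the radial limit of the complex quantity $F(u)/u$; what it gives, combined with the elementary bound $|1-\tilde f(r)|\ge 1-|\tilde f(r)|$ and the Julia upper bound $\lambda(r):=\frac{|1-\tilde f(r)|^2}{1-|\tilde f(r)|^2}\big/\frac{1-r}{1+r}\le\beta$, is the squeeze $\lambda(r)\to\beta$, hence $\Re\bigl(P(u)/u\bigr)\to 0$ radially; since the Herglotz representation yields $\Re P(w)\ge a\,\Re w$, this suffices to force $a=0$. (Even more directly: $a=0$ follows from $\beta$ being the \emph{supremum} of the Julia quotient, since $a>0$ would give $\Re F(w)\ge(1/\beta+a)\Re w$ everywhere.) Second, in the case $\beta_f(\sigma,\tau)=+\infty$ the assertion \eqref{eq:I.2.JC} is that the $K$-limit exists and equals $\infty$, so it is not enough to rule out a finite limit: you must rule out a bounded subsequence $z_n\to\sigma$ in a Stolz angle with $|\tau-f(z_n)|/|\sigma-z_n|\le C$. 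Your own estimate does exactly this ($1-|f(z_n)|\le CM(1-|z_n|)$ gives $\beta_f(\sigma)<+\infty$, the finite case then produces the unique $\tau'$ with $\beta_f(\sigma,\tau')<+\infty$, and $f(z_n)\to\tau$ forces $\tau'=\tau$, a contradiction), but it should be phrased for bounded subsequences rather than for an assumed finite limit.
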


\section{2-point Julia lemma}
\label{sec:2}

As anticipated in the introduction, our 2-point Julia lemma will be obtained by applying the classical Julia lemma to the function $f^*(\cdot,w)$. To do so we need to compute the boundary dilation coefficient of the hyperbolic difference quotient; this is done in the next two results. 

\begin{Lemma}
\label{th:1.f*bound}
Let $f\in\Hol(\D,\D)$ and $\sigma\in\de\D$ be such that $f$ has non-tangential limit $f(\sigma)\in\de\D$ at~$\sigma$. Then
\[
\Klim_{z\to\sigma} f^*(z,w)=\frac{f(\sigma)-f(w)}{1-\overline{f(w)}f(\sigma)}\bigg/\frac{\sigma-w}{1-\overline{w}\sigma}
=\overline{f(\sigma)}\sigma\,\frac{f(\sigma)-f(w)}{\overline{f(\sigma)}-\overline{f(w)}}\,\frac{\overline{\sigma}-\overline{w}}{\sigma-w}\in\de\D
\]
for all $w\in\D$. 
\end{Lemma}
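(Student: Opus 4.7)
The plan is to simply evaluate the defining formula
\[
f^*(z,w)=\frac{f(z)-f(w)}{1-\overline{f(w)}f(z)}\bigg/\frac{z-w}{1-\overline{w}z}
\]
as $z$ approaches $\sigma$ non-tangentially, for fixed $w\in\D$, and then simplify the resulting expression using $|\sigma|=|f(\sigma)|=1$.

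First I would verify that each of the four building blocks $f(z)-f(w)$, $1-\overline{f(w)}f(z)$, $z-w$, $1-\overline{w}z$ has a well-defined non-tangential limit at $\sigma$ and that the two denominators stay bounded away from~$0$. The numerators are immediate: $f$ has non-tangential limit $f(\sigma)$ by hypothesis, and $z\mapsto z-w$ is continuous with limit $\sigma-w\ne 0$ (since $|w|<1=|\sigma|$). For the denominators: $|1-\overline{w}z|\ge 1-|w|>0$ on all of $\D$, and $|1-\overline{f(w)}f(z)|\ge 1-|f(w)|>0$ on all of $\D$, so both are uniformly bounded below; moreover their limits $1-\overline{w}\sigma$ and $1-\overline{f(w)}f(\sigma)$ are nonzero. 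Hence the quotient passes to the non-tangential limit term by term, giving the first displayed equality.

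Next I would derive the second equality by a purely algebraic manipulation exploiting $|\sigma|=|f(\sigma)|=1$, which gives $\overline{\sigma}\sigma=1$ and $\overline{f(\sigma)}f(\sigma)=1$. Factor the denominators as
\[
1-\overline{w}\sigma=\sigma(\overline\sigma-\overline w),\qquad 1-\overline{f(w)}f(\sigma)=f(\sigma)\bigl(\overline{f(\sigma)}-\overline{f(w)}\bigr),
\]
substitute into the quotient, and collect the factor $\sigma/f(\sigma)=\overline{f(\sigma)}\sigma$; the desired formula follows immediately.

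Finally, membership in $\de\D$ is read off from the rewritten form: each of the three factors
\[
\overline{f(\sigma)}\sigma,\qquad \frac{f(\sigma)-f(w)}{\overline{f(\sigma)}-\overline{f(w)}},\qquad \frac{\overline{\sigma}-\overline{w}}{\sigma-w}
\]
has modulus~$1$ (the first because $|\sigma|=|f(\sigma)|=1$; the others because each denominator is the complex conjugate of the corresponding numerator). Since this is really a straightforward limit computation, I do not anticipate any serious obstacle; the only point that requires a moment of care is the verification that the denominators stay bounded away from $0$, but this follows at once from $|w|<1$ and $|f(w)|<1$.
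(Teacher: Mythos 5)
Your proof is correct and is exactly the computation the paper has in mind: the paper's own proof consists of the single line that the result ``follows immediately from the definition of $f^*$'', and your write-up just spells out the routine details (denominators bounded away from zero, the factorizations $1-\overline{w}\sigma=\sigma(\overline\sigma-\overline w)$ and $1-\overline{f(w)}f(\sigma)=f(\sigma)(\overline{f(\sigma)}-\overline{f(w)})$, and the modulus-one check). No gaps.
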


\begin{proof}
It follows immediately from the definition of $f^*$.
\end{proof}

\begin{Definition}
\label{def:1.f*sigma}
Let $f\in\Hol(\D,\D)$ and $\sigma\in\de\D$ be such that $f$ has non-tangential limit $f(\sigma)\in\de\D$ at~$\sigma$.  Given $w\in\D$ we set
\[
f^*(\sigma,w)=\Klim_{z\to\sigma} f^*(z,w)\;;
\]
in particular, $|f^*(\sigma, \cdot)|\equiv 1$.
\end{Definition}

\begin{Remark}
\label{rem:1.f*sigma}
For the sake of completeness, we remark that 
\[
\Klim_{w\to\sigma} f^*(z,w)=\frac{f(z)-f(\sigma)}{1-\overline{f(\sigma)}f(z)}\bigg/\frac{z-\sigma}{1-\overline{\sigma}z}\equiv f(\sigma)\overline{\sigma}
\]
for all $z\in\D$. Moreover, \eqref{eq:I.2.JC} yields
\[
\Klim_{w\to\sigma} f^*(\sigma,w)=\overline{f(\sigma)}\sigma\frac{f(\sigma)\overline{\sigma}\beta_f(\sigma)}{\overline{f(\sigma)}\sigma\beta_f(\sigma)}=f(\sigma)\overline{\sigma}\;.
\]
\end{Remark} 

\begin{Proposition}
\label{th:1.beta}
Let $f\in\Hol(\D,\D)$ and $\sigma\in\de\D$ be such that $\beta_f(\sigma)<+\infty$.
Denote by $f(\sigma)\in\de\D$ the non-tangential limit of~$f$ at~$\sigma$. Then
\begin{equation}
\begin{aligned}
\liminf_{z\to\sigma}\frac{1-|f^*(z,w)|}{1-|z|}&=\beta_f(\sigma)\frac{1-|f(w)|^2}{|f(\sigma)-f(w)|^2}-\frac{1-|w|^2}{|\sigma-w|^2}\\
&=\frac{1-|f(w)|^2}{|f(\sigma)-f(w)|^2}\left[\beta_f(\sigma)-\frac{|f(\sigma)-f(w)|^2}{1-|f(w)|^2}\bigg/\frac{|\sigma-w|^2}{1-|w|^2}\right]
\label{eq:1.beta}
\end{aligned}
\end{equation}
for all $w\in\D$. Moreover, the left-hand side vanishes for some $w_0\in\D$ if and only if it vanishes for all~$w\in\D$ if and only if $f\in\Aut(\D)$.
\end{Proposition}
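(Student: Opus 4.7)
The plan is to recognize the left-hand side as the boundary dilation coefficient of the holomorphic self-map $g = f^*(\cdot, w)$ at $\sigma$, then compute that coefficient by a direct radial limit using Corollary~\ref{th:1.bdc2}. Explicitly, by Proposition~\ref{th:I.2.sei} the left-hand side of \eqref{eq:1.beta} equals $\beta_g(\sigma)$, and by Corollary~\ref{th:1.bdc2} we have
\[
\beta_g(\sigma)=\lim_{r\to 1^-}\frac{1-|g(r\sigma)|^2}{1-r^2}\;,
\]
so everything reduces to computing this last limit.

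Next I would unpack $g(z)=A(z)/B(z)$ with $A(z)=\gamma_{f(w)}(f(z))$ and $B(z)=\gamma_w(z)$. Writing $1-|A/B|^2=(|B|^2-|A|^2)/|B|^2$ and applying the standard identity $1-|\gamma_a(\zeta)|^2=(1-|\zeta|^2)(1-|a|^2)/|1-\bar a\zeta|^2$ to both $A$ and $B$, I would obtain
\[
1-|g(z)|^2=\frac{1}{|B(z)|^2}\left[\frac{(1-|f(z)|^2)(1-|f(w)|^2)}{|1-\overline{f(w)}f(z)|^2}-\frac{(1-|z|^2)(1-|w|^2)}{|1-\bar w z|^2}\right].
\]
Setting $z=r\sigma$, dividing by $1-r^2$, and letting $r\to 1^-$, three things happen: $B(r\sigma)\to \gamma_w(\sigma)$ which has modulus~$1$; the factor $(1-|f(r\sigma)|^2)/(1-r^2)$ tends to $\beta_f(\sigma)$ by Corollary~\ref{th:1.bdc2} (which also guarantees $f(r\sigma)\to f(\sigma)\in\de\D$); and the remaining denominators converge to $|1-\overline{f(w)}f(\sigma)|^2=|f(\sigma)-f(w)|^2$ and $|1-\bar w\sigma|^2=|\sigma-w|^2$, since $|f(\sigma)|=|\sigma|=1$. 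Collecting the limits yields the claimed formula \eqref{eq:1.beta}.

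For the final characterization, if $f\in\Aut(\D)$ then $f^*(\cdot,w)$ is a unimodular constant by the formula for $\gamma^*$ recalled before Proposition~\ref{th:1.Blhdq}, so $1-|f^*(z,w)|\equiv 0$ and the left-hand side vanishes for every $w$. Conversely, if the left-hand side vanishes for some $w_0$, then $\beta_{f^*(\cdot,w_0)}(\sigma)=0$; applying Lemma~\ref{th:I.2.quattro} to $f^*(\cdot,w_0)\in\Hol(\D,\D)$ forces $|f^*(0,w_0)|=1$, and the classical Schwarz-Pick equality case (recalled right after the definition of $f^*$) then gives $f\in\Aut(\D)$.

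The main obstacle I anticipate is bookkeeping in the algebraic identity for $1-|g|^2$ and in verifying that every term in the quotient has a genuine limit along the radius $r\sigma$: the denominators $|1-\overline{f(w)}f(r\sigma)|$ and $|1-\bar w r\sigma|$ stay bounded away from zero because $w\in\D$ and $|f(\sigma)|=1$, while the crucial factor $(1-|f(r\sigma)|^2)/(1-r^2)$ is precisely the quantity controlled by Corollary~\ref{th:1.bdc2}; once these points are checked, passing to the limit is routine.
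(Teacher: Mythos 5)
Your proposal is correct and follows essentially the same route as the paper: the same algebraic identity for $1-|f^*(z,w)|^2$ obtained from $1-|\gamma_a(\zeta)|^2=(1-|\zeta|^2)(1-|a|^2)/|1-\bar a\zeta|^2$, followed by the same reduction to a radial limit via Corollary~\ref{th:1.bdc2} and evaluation using $\frac{1-|f(r\sigma)|^2}{1-r^2}\to\beta_f(\sigma)$ and $f(r\sigma)\to f(\sigma)\in\de\D$. The only (immaterial) divergence is in the equality case, where the paper reads it off from the equality case of the Julia lemma applied to the bracketed factor in the second line of \eqref{eq:1.beta}, while you deduce $|f^*(0,w_0)|=1$ from Lemma~\ref{th:I.2.quattro} and invoke the Schwarz--Pick equality case; both arguments are valid.
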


\begin{proof}
First of all we have 
\[
\begin{aligned}
1-|f^*(z,w)|^2&=1-\frac{\left|\frac{f(z)-f(w)}{1-\overline{f(w)}f(z)}\right|^2}{\left|\frac{z-w}{1-\overline{w}z}\right|^2}=
\frac{\left(1-\left|\frac{f(z)-f(w)}{1-\overline{f(w)}f(z)}\right|^2\right)-\left(1-\left|\frac{z-w}{1-\overline{w}z}\right|^2\right)}{\left|\frac{z-w}{1-\overline{w}z}\right|^2}\\
&=\frac{\frac{(1-|f(z)|^2)(1-|f(w)|^2)}{|1-\overline{f(w)}f(z)|^2}-\frac{(1-|z|^2)(1-|w|^2)}{|1-\overline{w}z|^2}}{\left|\frac{z-w}{1-\overline{w}z}\right|^2}\\
&=(1-|z|^2)\frac{\frac{1-|f(w)|^2}{|1-\overline{f(w)}f(z)|^2}\frac{1-|f(z)|^2}{1-|z|^2}-\frac{1-|w|^2}{|1-\overline{w}z|^2}}{\left|\frac{z-w}{1-\overline{w}z}\right|^2}
\end{aligned}
\]
Recalling Corollary~\ref{th:1.bdc2} and the fact that $f$ has non-tangential limit $f(\sigma)\in\de\D$ at~$\sigma$ we obtain
\[
\begin{aligned}
\liminf_{z\to\sigma}\frac{1-|f^*(z,w)|}{1-|z|}&=
\liminf_{r\to 1^-}\frac{1-|f^*(r\sigma,w)^2|}{1-r^2}
=\frac{\frac{1-|f(w)|^2}{|1-\overline{f(w)}f(\sigma)|^2}\beta_f(\sigma)-\frac{1-|w|^2}{|1-\overline{w}\sigma|^2}}{\left|\frac{\sigma-w}{1-\overline{w}\sigma}\right|^2}\\
&=\beta_f(\sigma)\frac{1-|f(w)|^2}{|f(\sigma)-f(w)|^2}-\frac{1-|w|^2}{|\sigma-w|^2}\;,
\end{aligned}
\]
and \eqref{eq:1.beta} is proved.

Finally, by Theorem~\ref{th:I.2.cinque} the right-hand side in \eqref{eq:1.beta} vanishes at one point if and only if it vanishes identically if and only if $f\in\Aut(\D)$,
and we are done.
\end{proof}

\begin{Definition}
\label{def:1.beta*}
Given $f\in\Hol(\D,\D)$ and $\sigma\in\de\D$ we put
\[
\beta^*_f(\sigma;w)=\beta_f(\sigma)\frac{1-|f(w)|^2}{|f(\sigma)-f(w)|^2}-\frac{1-|w|^2}{|\sigma-w|^2}\in[0,+\infty]
\]
for all $w\in\D$. In particular, $\beta^*_f(\sigma,w_0)=+\infty$ for some $w_0\in\D$ if and only if $\beta^*_f(\sigma;\cdot)\equiv+\infty$ if and only if $\beta_f(\sigma)=+\infty$,
and $\beta^*_f(\sigma,w_0)=0$ for some $w_0\in\D$ if and only if $\beta^*_f(\sigma;\cdot)\equiv0$ if and only if $f\in\Aut(\D)$.
\end{Definition}

As a first hint of how it is possible to use this kind of results we show how to improve \eqref{eq:I.2.basso}: 

\begin{Corollary}
\label{th:1.basso}
Let $f\in\Hol(\D,\D)$ and $\sigma\in\de\D$ be such that $\beta_f(\sigma)<+\infty$. Then
\[
\beta_f(\sigma)\frac{1-|f(w)|^2}{|f(\sigma)-f(w)|^2}\ge\frac{|w|-\left|\frac{f(w)-f(0)}{1-\overline{f(w)}f(0)}\right|}{|w|+\left|\frac{f(w)-f(0)}{1-\overline{f(w)}f(0)}\right|}+\frac{1-|w|^2}{|\sigma-w|^2}
\]
for all $w\in\D\setminus\{0\}$ and
\begin{equation}
\beta_f(\sigma)\ge\frac{2}{1+|f^h(0)|}\frac{|f(\sigma)-f(0)|^2}{1-|f(0)|^2}\ge\frac{2}{1+|f^h(0)|}\frac{1-|f(0)|}{1+|f(0)|}\;.
\label{eq:1.Oss}
\end{equation}
\end{Corollary}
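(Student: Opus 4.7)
The natural approach is to apply Lemma~\ref{th:I.2.quattro} to the Schwarz--Pick contraction $g(z)=f^*(z,w)$, which, by the Schwarz--Pick lemma, lies in $\Hol(\D,\D)$ (the boundary case $|f^*|\equiv 1$ corresponds to $f\in\Aut(\D)$, and there the asserted inequality reduces to the classical Julia lemma). The idea is that Proposition~\ref{th:1.beta} identifies $\beta^*_f(\sigma;w)$ with $\liminf_{z\to\sigma}(1-|f^*(z,w)|)/(1-|z|)$, while Lemma~\ref{th:I.2.quattro} bounds this ratio pointwise from below by $(1-|f^*(0,w)|)/(1+|f^*(0,w)|)$, a quantity we can compute explicitly.

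Fix $w\in\D\setminus\{0\}$. Directly from the definition of the hyperbolic difference quotient,
\[
f^*(0,w)=\frac{f(0)-f(w)}{1-\overline{f(w)}f(0)}\bigg/\frac{-w}{1}=\frac{1}{w}\,\frac{f(w)-f(0)}{1-\overline{f(w)}f(0)},
\]
so $|f^*(0,w)|=|w|^{-1}\bigl|\frac{f(w)-f(0)}{1-\overline{f(w)}f(0)}\bigr|$. Applying Lemma~\ref{th:I.2.quattro} to $g$, taking $\liminf$ as $z\to\sigma$, and identifying the left-hand side with $\beta^*_f(\sigma;w)$ via Proposition~\ref{th:1.beta} yields
\[
\beta_f(\sigma)\frac{1-|f(w)|^2}{|f(\sigma)-f(w)|^2}-\frac{1-|w|^2}{|\sigma-w|^2}\;\ge\;\frac{|w|-\left|\frac{f(w)-f(0)}{1-\overline{f(w)}f(0)}\right|}{|w|+\left|\frac{f(w)-f(0)}{1-\overline{f(w)}f(0)}\right|},
\]
which is the first asserted inequality after moving the horocycle term to the right.

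For the middle inequality in \eqref{eq:1.Oss}, I would let $w\to 0$ in the inequality just proved. A first-order expansion gives $\frac{f(w)-f(0)}{1-\overline{f(w)}f(0)}=\frac{f'(0)}{1-|f(0)|^2}\,w+o(w)$, so $|f^*(0,w)|\to |f^h(0)|$; also $\frac{1-|w|^2}{|\sigma-w|^2}\to 1$ and $\frac{1-|f(w)|^2}{|f(\sigma)-f(w)|^2}\to\frac{1-|f(0)|^2}{|f(\sigma)-f(0)|^2}$. Passing to the limit,
\[
\beta_f(\sigma)\frac{1-|f(0)|^2}{|f(\sigma)-f(0)|^2}\;\ge\;\frac{1-|f^h(0)|}{1+|f^h(0)|}+1=\frac{2}{1+|f^h(0)|},
\]
which rearranges to the middle inequality. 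Finally, the last inequality is immediate from the triangle estimate $|f(\sigma)-f(0)|\ge |f(\sigma)|-|f(0)|=1-|f(0)|$ together with the factorization $1-|f(0)|^2=(1-|f(0)|)(1+|f(0)|)$.

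There is no real obstacle here: the only care needed is to treat the degenerate case $f\in\Aut(\D)$, where Proposition~\ref{th:1.beta} gives $\beta^*_f(\sigma;w)\equiv 0$ and the bound from Lemma~\ref{th:I.2.quattro} is the trivial $0$, so both sides of the first inequality collapse to the classical Julia inequality and the conclusion still holds.
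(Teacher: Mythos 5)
Your proposal is correct and follows essentially the same route as the paper: apply the lower bound \eqref{eq:I.2.basso} of Lemma~\ref{th:I.2.quattro} to $f^*(\cdot,w)$, identify the resulting left-hand side with $\beta^*_f(\sigma;w)$ via Proposition~\ref{th:1.beta}, and compute $f^*(0,w)$ explicitly. The only cosmetic differences are that you obtain the $w=0$ case by letting $w\to 0$ instead of using the value $f^*(0,0)=f^h(0)$ directly, and that you spell out the degenerate case $f\in\Aut(\D)$, which the paper leaves implicit.
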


\begin{proof}
If we apply \eqref{eq:I.2.basso} to $f^*(\cdot,w)$ we get
\[
\beta^*_f(\sigma,w)\ge\frac{1-|f^*(0,w)|}{1+|f^*(0,w)|}
\]
for all $w\in\D$. Since
\[
f^*(0,w)=
\begin{cases}
\frac{f(w)-f(0)}{w}\frac{1}{1-\overline{f(w)}f(0)}&\mathrm{if}\ w\ne 0\;,\\
f^h(0)&\mathrm{if}\ w=0\;,
\end{cases}
\]
we immediately get the first inequality for $w\ne 0$. When $w=0$ we get
\begin{equation}
\beta_f(\sigma)\frac{1-|f(0)|^2}{|f(\sigma)-f(0)|^2}\ge\frac{1-|f^h(0)|}{1+|f^h(0)|}+1=\frac{2}{1+|f^h(0)|}
\label{eq:1.Ossdue}
\end{equation}
and we are done.
\end{proof}

Notice that when $f\notin\Aut(D)$ we have $|f^h(0)|<1$ and thus \eqref{eq:1.Oss} is strictly stronger than \eqref{eq:I.2.basso}.

The inequality \eqref{eq:1.Oss} was already known (see, e.g., \cite{Osserman}); however, in the next section we shall substantially improve it (see Theorem~\ref{th:2.bassogen} and its corollaries).

We can now state and prove our 2-point Julia lemma:

\begin{Theorem}
\label{th:1.2pJulia}
Let $f\in\Hol(\D,\D)\setminus\Aut(\D)$ and $\sigma\in\de\D$ be such that $\beta_f(\sigma)<+\infty$.
Then
\begin{equation}
\frac{|f^*(\sigma,w)-f^*(z,w)|^2}{1-|f^*(z,w)|^2}\le\beta^*_f(\sigma;w)\frac{|\sigma-z|^2}{1-|z|^2}
\label{eq:1.2pJulia}
\end{equation}
for all $z$, $w\in\D$. 
Moreover, equality in \eqref{eq:1.2pJulia} occurs for some $(z_0,w_0)\in\D\times\D$ if and only if it occurs everywhere if and only if $f$ is a Blaschke product of degree~2.
\end{Theorem}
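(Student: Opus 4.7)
The plan is to reduce Theorem~\ref{th:1.2pJulia} to the classical Julia lemma (Theorem~\ref{th:I.2.cinque}) applied to $g_w(z):=f^*(z,w)$ for fixed $w\in\D$, exactly in the spirit of the Beardon--Minda reduction used for the 3-point Schwarz--Pick lemma.

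First I fix $w\in\D$ and check that $g_w\in\Hol(\D,\D)$: by the Schwarz--Pick lemma, $|f^*|\le 1$, and since $f\notin\Aut(\D)$, the maximum principle rules out $|g_w|\equiv 1$, so $g_w$ actually maps into~$\D$. Next I identify the two ingredients needed to apply Julia's lemma to~$g_w$. By Proposition~\ref{th:I.2.sei} combined with Proposition~\ref{th:1.beta}, the boundary dilation coefficient of $g_w$ at~$\sigma$ is exactly
\[
\beta_{g_w}(\sigma)=\liminf_{z\to\sigma}\frac{1-|g_w(z)|}{1-|z|}=\beta^*_f(\sigma;w)\;,
\]
which is finite because $\beta_f(\sigma)<+\infty$. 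Then by Lemma~\ref{th:1.f*bound} and Definition~\ref{def:1.f*sigma}, $g_w$ has non-tangential limit $f^*(\sigma,w)\in\de\D$ at~$\sigma$; the Julia--Wolff--Carath\'eodory theorem (Theorem~\ref{th:I.2.dieci}) then forces $f^*(\sigma,w)$ to be the distinguished boundary point~$\tau$ appearing in Julia's lemma applied to~$g_w$.

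Plugging these two identifications into \eqref{eq:I.2.nove} for~$g_w$ gives precisely \eqref{eq:1.2pJulia}. For the equality discussion, Theorem~\ref{th:I.2.cinque} says that equality in~\eqref{eq:1.2pJulia} at a point $(z_0,w_0)$ is equivalent to equality everywhere (in the $z$-variable, with $w=w_0$ fixed), and is equivalent to $g_{w_0}=f^*(\cdot,w_0)\in\Aut(\D)=\mathcal{B}_1$. By Proposition~\ref{th:1.Blhdq}, this in turn is equivalent to $f\in\mathcal{B}_2$. Finally, if $f\in\mathcal{B}_2$, then Proposition~\ref{th:1.Blhdq} gives $f^*(\cdot,w)\in\mathcal{B}_1=\Aut(\D)$ for \emph{every} $w\in\D$, so the equality case of Julia's lemma propagates to every pair $(z,w)$, closing the equivalence.

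I do not expect a real obstacle here: the main content is the identification of the Julia data of $g_w$, and both pieces are already available --- Proposition~\ref{th:1.beta} gives the boundary dilation coefficient and Lemma~\ref{th:1.f*bound} gives the limiting value. The only mildly delicate point is making sure the quantifiers in the equality statement are handled symmetrically in $z$ and $w$, which is taken care of by running the implication ``equality at $(z_0,w_0)\Rightarrow f\in\mathcal{B}_2$'' first and then using $f\in\mathcal{B}_2$ to produce equality for all pairs via Proposition~\ref{th:1.Blhdq}.
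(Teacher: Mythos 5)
Your proposal is correct and follows essentially the same route as the paper: apply the classical Julia lemma (Theorem~\ref{th:I.2.cinque}) to $f^*(\cdot,w)$, using Proposition~\ref{th:1.beta} for the boundary dilation coefficient and Lemma~\ref{th:1.f*bound} for the boundary value, and then settle the equality case via Proposition~\ref{th:1.Blhdq}. The paper's proof is just a terser version of yours, leaving the identification of the Julia data of $f^*(\cdot,w)$ implicit.
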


\begin{proof}
The inequality \eqref{eq:1.2pJulia} follows from Theorem~\ref{th:I.2.cinque} applied to $f^*(\cdot,w)$. If we have equality in \eqref{eq:1.2pJulia}
for some $(z_0,w_0)\in\D\times\D$ again Theorem~\ref{th:I.2.cinque} implies that $f^*(\cdot,w_0)\in\Aut(\D)$, and then Proposition~\ref{th:1.Blhdq} implies that
$f\in\mathcal{B}_2$. Conversely, $f\in\mathcal{B}_2$ implies that $f^*(\cdot,w)\in\Aut(\D)$ for all $w\in\D$, and thus we have equality in \eqref{eq:1.2pJulia}
for all $z$,~$w\in\D$.
\end{proof}

\begin{Remark}
\label{rem:1.Mercer}
It turns out that \eqref{eq:1.2pJulia} implies a (not very illuminating) Euclidean statement, originally proved by Mercer~\cite{Mercer2000}, that in our notations can be expressed as follows: let $f\in\Hol(\D,\D)$ and $\sigma\in\de\D$ be such that $\beta_f(\sigma)<+\infty$. Take $w\in\D$ and set $\Lambda=\frac{1-|z|^2}{|\sigma-z|^2}$, $\hat\beta=\beta_f^*(\sigma;w)$, $\phi_{w}(z)=\frac{w-z}{1-\overline{w}z}$ and 
\[
L=\frac{1-|\overline{f(w)}\phi_{w}(z)|^2}{|1-\overline{f(w)}f^*(\sigma,w)\phi_{w}(z)|^2}\;.
\] 
Then for all $z\in\D$ we have $|f(z)-c_w(z)|<r_w(z)$, where
\[
\begin{aligned}
c_w(z)&=\overline{f(\sigma)}f^*(\sigma,w)\phi_{w}(z)\frac{1-|f(w)|^2}{\bigl(1-\overline{f(w)}f^*(\sigma,w)\phi_{w}(z)\bigr)^2}\frac{\hat\beta}{\hat\beta L+\Lambda}\\
&\quad+\overline{f(\sigma)}\phi_{f(w)}\bigl(f^*(\sigma,w)\phi_{w}(z)\bigr)\;,\\
r_w(z)&=|\phi_{w}(z)|\frac{1-|f(w)|^2}{\bigl|1-\overline{f(w)}f^*(\sigma,w)\phi_{w}(z)\bigr|^2}\frac{\hat\beta}{\hat\beta L+\Lambda}\;.
\end{aligned}
\]

This can be recovered as follows. Theorem \ref{th:1.2pJulia} says that $f^*(z,w)$ belongs to the horocycle of center $f^*(\sigma,w)$ and radius $\beta^*/\Lambda$, which is an Euclidean disk of center $\frac{\Lambda}{\beta^*+\Lambda}f^*(\sigma,w)$ and radius~$\frac{\beta^*}{\beta^*+\Lambda}$. Since $\phi_w(z)f^*(z,w)=\phi_{f(w)}\bigl(f(z)\bigr)$,
it follows that $\phi_{f(w)}\bigl(f(z)\bigr)$ belongs to the Euclidean disk~$D$ of center 
$\frac{\Lambda}{\beta^*+\Lambda}f^*(\sigma,w)\phi_w(z)$ and radius~$\frac{\beta^*}{\beta^*+\Lambda}|\phi_w(z)|$. Using the fact that $\phi_{f(w)}^{-1}=\phi_{f(w)}$ we get that $f(z)$ belongs to the
Euclidean disk $\phi_{f(w)}(D)$; computing center and radius of this latter disk we get the assertion.
\end{Remark}

Applying Theorem~\ref{th:I.2.dieci} to $f^*(\cdot,w)$ we get the next corollary:

\begin{Corollary}
\label{th:1.2pJWC}
Let $f\in\Hol(\D,\D)\setminus\Aut(\D)$ and $\sigma\in\de\D$ be such that $\beta_f(\sigma)<+\infty$.
Then
\[
\Klim_{z\to\sigma}\frac{f^*(\sigma,w)-f^*(z,w)}{\sigma-z}=f^*(\sigma,w)\overline{\sigma}\beta^*_f(\sigma;w)
\]
and
\[
\Klim_{z\to\sigma}\frac{\de f^*}{\de z}(z,w)=f^*(\sigma,w)\overline{\sigma}\beta^*_f(\sigma;w)\;.
\]
\end{Corollary}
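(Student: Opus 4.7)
The plan is to read this as a direct application of the Julia--Wolff--Carath\'eodory theorem (Theorem~\ref{th:I.2.dieci}) to the holomorphic function $g_w:=f^*(\cdot,w)\colon\D\to\C$ for fixed $w\in\D$, after verifying that all the boundary hypotheses are in place. The substantive work has already been done in Proposition~\ref{th:1.beta}, which computes the boundary dilation coefficient of $g_w$ at~$\sigma$; what remains is bookkeeping.

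First I would check that $g_w\in\Hol(\D,\D)$. Because $f\notin\Aut(\D)$, the Schwarz--Pick lemma forces $|f^*(z,w)|<1$ for every $z\in\D$, so $g_w$ is indeed a holomorphic self-map of~$\D$. Next, Lemma~\ref{th:1.f*bound} tells us that $g_w$ has non-tangential limit $g_w(\sigma)=f^*(\sigma,w)\in\de\D$ at~$\sigma$. Finally, Proposition~\ref{th:1.beta} (combined with Proposition~\ref{th:I.2.sei}) identifies
\[
\beta_{g_w}(\sigma)=\liminf_{z\to\sigma}\frac{1-|g_w(z)|}{1-|z|}=\beta^*_f(\sigma;w)\;,
\]
which is finite since $\beta_f(\sigma)<+\infty$. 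Moreover, by Remark~\ref{rem:I.2.bdc} the unique point of $\de\D$ at which $\beta_{g_w}(\sigma,\cdot)$ is finite is necessarily $f^*(\sigma,w)$, so we are exactly in the situation covered by Theorem~\ref{th:I.2.dieci}.

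Applying \eqref{eq:I.2.JC} to $g_w$ with $\tau=f^*(\sigma,w)$ immediately yields
\[
\Klim_{z\to\sigma}\frac{f^*(\sigma,w)-f^*(z,w)}{\sigma-z}=f^*(\sigma,w)\,\overline{\sigma}\,\beta^*_f(\sigma;w)\;,
\]
which is the first assertion. The second assertion follows from \eqref{eq:I.2.JCdue} applied to the same $g_w$, once we observe that $g_w'(z)=\frac{\de f^*}{\de z}(z,w)$ by holomorphy of $f^*$ in its first variable.

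There is no real obstacle: everything is a verification of hypotheses and a direct invocation of previously established machinery. The only point requiring a moment's thought is that Proposition~\ref{th:1.beta} gives $\beta_{g_w}(\sigma)$ as a \emph{liminf}, whereas JWC requires knowing the non-tangential limit along the radius; but \eqref{eq:1.beta} was in fact obtained as the radial limit through Corollary~\ref{th:1.bdc2}, so this is compatible with the finite $\beta$ case needed to apply Theorem~\ref{th:I.2.dieci}.
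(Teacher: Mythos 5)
Your proof is correct and follows exactly the route the paper takes: the paper derives Corollary~\ref{th:1.2pJWC} by applying Theorem~\ref{th:I.2.dieci} to $f^*(\cdot,w)$, using Lemma~\ref{th:1.f*bound} and Proposition~\ref{th:1.beta} to supply the boundary limit and the finite dilation coefficient $\beta^*_f(\sigma;w)$. Your additional checks (that $f^*(\cdot,w)\in\Hol(\D,\D)$ since $f\notin\Aut(\D)$, and that $f^*(\sigma,w)$ is the unique admissible $\tau$) are exactly the implicit hypotheses the paper relies on.
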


As mentioned in the introduction, specializing Theorem~\ref{th:1.2pJulia} to the case $f(z_0)=z_0$ we can recover an estimate due to Cowen and Pommerenke. The main step is contained in the following

\begin{Corollary}
\label{th:1.CP}
Let $f\in\Hol(\D,\D)\setminus\Aut(\D)$ and $\sigma\in\de\D$ be such that $\beta_f(\sigma)<+\infty$.
Assume that $f(0)=0$. Then 
\begin{equation}
\frac{\left|\frac{f(\sigma)}{\sigma}-\frac{f(z)}{z}\right|^2}{1-\left|\frac{f(z)}{z}\right|^2}\le \bigl(\beta_f(\sigma)-1\bigr)\frac{|\sigma-z|^2}{1-|z|^2}
\label{eq:1.Jz}
\end{equation}
if $z\ne 0$ and
\begin{equation}
\frac{\left|\frac{f(\sigma)}{\sigma}-f'(0)\right|^2}{1-|f'(0)|^2}\le\beta_f(\sigma)-1\;.
\label{eq:1.Jzz}
\end{equation}
In particular, if furthermore $f(\sigma)=\sigma$ we get
\begin{equation}
\frac{|1-f'(0)|^2}{1-|f'(0)|^2}\le\beta_f(\sigma)-1\;.
\label{eq:1.Jzzz}
\end{equation}
Moreover, equality occurs in \eqref{eq:1.Jz} at some $z_0\in\D\setminus\{0\}$ or in \eqref{eq:1.Jzz} if and only if it always occurs if and only $f$ is a Blaschke product of degree~2.
\end{Corollary}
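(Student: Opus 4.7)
The plan is to apply Theorem~\ref{th:1.2pJulia} with the specific choice $w=0$, then read off the three inequalities by computing $f^*(z,0)$, $f^*(\sigma,0)$ and $\beta^*_f(\sigma;0)$ under the hypothesis $f(0)=0$.

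First I would observe that when $f(0)=0$ the definition of the hyperbolic difference quotient immediately gives
\[
f^*(z,0)=\frac{f(z)}{z}\quad\text{for }z\ne 0,\qquad f^*(0,0)=f^h(0)=f'(0),
\]
and, by Lemma~\ref{th:1.f*bound},
\[
f^*(\sigma,0)=\frac{f(\sigma)}{\sigma}.
\]
Next, plugging $w=0$ and $|f(\sigma)|=1$ into the definition of $\beta^*_f(\sigma;w)$ yields the crucial simplification
\[
\beta^*_f(\sigma;0)=\beta_f(\sigma)\,\frac{1-|f(0)|^2}{|f(\sigma)-f(0)|^2}-\frac{1-|0|^2}{|\sigma-0|^2}=\beta_f(\sigma)-1.
\]

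Substituting these three identities into the 2-point Julia inequality \eqref{eq:1.2pJulia} at $w=0$ produces \eqref{eq:1.Jz} for $z\ne 0$ and, by considering the case $z=0$ separately (so that $f^*(z,0)$ has to be read as $f'(0)$), the inequality \eqref{eq:1.Jzz}. The estimate \eqref{eq:1.Jzzz} then follows by plugging the extra hypothesis $f(\sigma)=\sigma$ into \eqref{eq:1.Jzz}.

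For the equality statement, note that each of \eqref{eq:1.Jz} and \eqref{eq:1.Jzz} is exactly the instance $w=0$ (at the appropriate value of $z$) of \eqref{eq:1.2pJulia}. Therefore equality in \eqref{eq:1.Jz} at some $z_0\ne 0$, or equality in \eqref{eq:1.Jzz}, produces equality in \eqref{eq:1.2pJulia} at some point $(z_0,0)\in\D\times\D$, which by Theorem~\ref{th:1.2pJulia} forces $f\in\mathcal{B}_2$; conversely, if $f\in\mathcal{B}_2$ the same theorem gives equality in \eqref{eq:1.2pJulia} everywhere, and in particular at $w=0$ for every $z$. No genuinely hard step is involved: all the work has already been done in Theorem~\ref{th:1.2pJulia} and Lemma~\ref{th:1.f*bound}; the only mildly delicate point is making sure to handle $z=0$ separately so as to obtain \eqref{eq:1.Jzz} rather than a degenerate form of \eqref{eq:1.Jz}.
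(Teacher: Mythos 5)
Your proposal is correct and follows exactly the paper's own argument: specialize Theorem~\ref{th:1.2pJulia} to $w=0$, compute $f^*(z,0)=f(z)/z$ (read as $f'(0)$ at $z=0$), $f^*(\sigma,0)=f(\sigma)/\sigma$ and $\beta^*_f(\sigma;0)=\beta_f(\sigma)-1$ using $f(0)=0$ and $|f(\sigma)|=1$, and inherit the equality discussion from that theorem. Nothing is missing.
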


\begin{proof} 
If $f(0)=0$ then 
\begin{equation}
f^*(z,0)=
\begin{cases}
\frac{f(z)}{z}&\mathrm{if}\ z\ne 0\;,\\
f'(0)&\mathrm{if}\ z=0\;;
\end{cases}
\label{eq:1.recall}
\end{equation}
moreover, $f^*(\sigma,0)=f(\sigma)/\sigma$ and $\beta^*_f(\sigma;0)=\beta_f(\sigma)-1$. The assertions then follow from Theorem~\ref{th:1.2pJulia}.
\end{proof}

When $f(\sigma)=\sigma$ \eqref{eq:1.Jzzz} can be restated as
\[
\frac{1}{\beta_f(\sigma)-1}\le\frac{1-|f'(0)|^2}{|1-f'(0)|^2}=\Re\frac{1+f'(0)}{1-f'(0)}\;.
\]
Recalling that $f'(\sigma)=\beta_f(\sigma)$ when $f(\sigma)=\sigma$, where $f'(\sigma)$ is the non-tangential limit of~$f'$ at~$\sigma$ (see Theorem~\ref{th:I.2.dieci}), using \eqref{eq:1.Jz} we can now recover a result due to Cowen and Pommerenke~\cite{CowenPommerenke}, saying that a similar estimate still holds when there are several fixed points in the boundary:

\begin{Proposition}[Cowen-Pommerenke, 1982]
\label{th:1.CPtrue}
Let $f\in\Hol(\D,\D)\setminus\Aut(\D)$ be such that $f(z_0)=z_0$ for some $z_0\in\D$. Assume there exist $\sigma_1,\ldots,\sigma_n\in\de\D$ distinct points with $\beta_f(\sigma_j)<+\infty$ and $f(\sigma_j)=\sigma_j$ for $j=1,\ldots,n$. Then
\begin{equation}
\sum_{j=1}^n \frac{1}{\beta_f(\sigma_j)-1}\le\frac{1-|f'(z_0)|^2}{|1-f'(z_0)|^2}\;.
\label{eq:1.CPtrue}
\end{equation}
Furthermore, equality holds if and only if $f$ is a Blaschke product of degree~$n+1$.
\end{Proposition}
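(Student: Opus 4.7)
The plan is to reduce to the case $z_0=0$, rewrite the 2-point Julia estimate \eqref{eq:1.Jz} at each boundary fixed point as a lower bound on the real part of a Carath\'eodory function, and then aggregate the $n$ individual bounds via the Herglotz representation.

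First I would normalize. Conjugating by $\gamma_{z_0}\in\Aut(\D)$, the map $g=\gamma_{z_0}\circ f\circ \gamma_{z_0}^{-1}$ satisfies $g\in\Hol(\D,\D)\setminus\Aut(\D)$, $g(0)=0$, $g'(0)=f'(z_0)$, has the distinct points $\tilde\sigma_j:=\gamma_{z_0}(\sigma_j)\in\de\D$ as boundary fixed points with $\beta_g(\tilde\sigma_j)=\beta_f(\sigma_j)$ (a direct computation using $1-|\gamma_{z_0}(w)|^2=(1-|z_0|^2)(1-|w|^2)/|1-\overline{z_0}w|^2$), and is a Blaschke product of degree $n+1$ if and only if $f$ is (Lemma~\ref{th:I.2.Blaschke}). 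So one may assume $f(0)=0$ and $f(\sigma_j)=\sigma_j$ for $j=1,\dots,n$.

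Next set $h(z)=f(z)/z\in\Hol(\D,\D)$ (necessarily $h\not\equiv 1$, else $f=\id$) and $p(z)=(1+h(z))/(1-h(z))$, which has positive real part. Using the identities $\Re\tfrac{1+\zeta}{1-\zeta}=\tfrac{1-|\zeta|^2}{|1-\zeta|^2}$ and $\Re\tfrac{\sigma+z}{\sigma-z}=\tfrac{1-|z|^2}{|\sigma-z|^2}$, estimate \eqref{eq:1.Jz} at $\sigma_j$ (where $f(\sigma_j)/\sigma_j=1$) rewrites as
\[
\Re p(z)\ge\frac{1}{\beta_f(\sigma_j)-1}\,\Re\frac{\sigma_j+z}{\sigma_j-z},\qquad z\in\D,\ j=1,\ldots,n.
\]
The Herglotz representation yields $y_0\in\R$ and a positive finite Borel measure $\mu$ on $\de\D$ with $p(z)=iy_0+\int_{\de\D}\tfrac{\zeta+z}{\zeta-z}\,d\mu(\zeta)$ and $\mu(\de\D)=\Re p(0)=\tfrac{1-|f'(0)|^2}{|1-f'(0)|^2}$. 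Evaluating at $z=r\sigma_j$, multiplying by $(1-r)/2$ and letting $r\to 1^-$, the classical limit $\tfrac12(1-r)\Re p(r\sigma_j)\to\mu(\{\sigma_j\})$ combines with the pointwise bound above to give $\mu(\{\sigma_j\})\ge\tfrac{1}{\beta_f(\sigma_j)-1}$. Since the $\sigma_j$ are distinct, summing and using positivity of $\mu$ yields
\[
\sum_{j=1}^n\frac{1}{\beta_f(\sigma_j)-1}\le\sum_{j=1}^n\mu(\{\sigma_j\})\le\mu(\de\D)=\frac{1-|f'(0)|^2}{|1-f'(0)|^2},
\]
which is \eqref{eq:1.CPtrue}.

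The most delicate step is the equality case. Equality throughout forces $\mu$ to be concentrated on $\{\sigma_1,\ldots,\sigma_n\}$ with mass $\tfrac{1}{\beta_f(\sigma_j)-1}$ at $\sigma_j$; writing $iy_0=\tfrac{1+B_0}{1-B_0}$ with $B_0=\tfrac{iy_0-1}{iy_0+1}\in\mathcal{B}_0$, Lemma~\ref{th:1.Bz} then yields $h\in\mathcal{B}_n$, whence $f(z)=z\,h(z)\in\mathcal{B}_{n+1}$. Conversely, if $f\in\mathcal{B}_{n+1}$ then $h=f/z\in\mathcal{B}_n$ by Proposition~\ref{th:1.Blhdq}; the equation $h(\zeta)=1$ on $\de\D$ has exactly $n$ solutions counted with multiplicity, which must be the $\sigma_j$ (each simple, since the finiteness of $\beta_h(\sigma_j)=\beta_f(\sigma_j)-1$ rules out a multiple root), so $p$ is rational with simple poles exactly at the $\sigma_j$ and all the inequalities above saturate. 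The main obstacle is this equality analysis: the forward inequality is a transparent application of Herglotz to the 2-point Julia lemma, but matching the extremal $\mu$ back to a Blaschke product of degree exactly $n+1$ is precisely what Lemma~\ref{th:1.Bz} is designed for.
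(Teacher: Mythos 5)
Your proof is correct, but it follows a genuinely different route from the paper's. The paper argues by induction on $n$: at each step it subtracts the contributions of $\sigma_1,\dots,\sigma_{n-1}$ from the Carath\'eodory function $\bigl(1+f(z)/z\bigr)/\bigl(1-f(z)/z\bigr)$, shows the resulting function $h$ is again a self-map of the disk (ruling out the unimodular-constant case by a degree count), forms $g(z)=zh(z)$, verifies $g(\sigma_n)=\sigma_n$ with $g'(\sigma_n)=\beta_n$ by an explicit computation of $h'$, and then invokes the base case (Corollary~\ref{th:1.CP}) to peel off the last point. You instead aggregate all $n$ lower bounds $\Re p(z)\ge\frac{1}{\beta_j-1}\Re\frac{\sigma_j+z}{\sigma_j-z}$ simultaneously through the Herglotz representation, identifying $\frac{1}{\beta_j-1}$ as a lower bound for the atom of the representing measure at $\sigma_j$ and bounding the sum of atoms by the total mass $\Re p(0)$. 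This is closer in spirit to Cowen--Pommerenke's original argument; it avoids the induction and the somewhat delicate computation of $g'(\sigma_n)$, at the cost of importing the Herglotz representation theorem, which the paper deliberately avoids so as to stay within its own toolbox of Julia-type lemmas. Both proofs converge at the equality analysis: you and the paper both feed the extremal configuration into Lemma~\ref{th:1.Bz} to conclude $f\in\mathcal{B}_{n+1}$, and both run the same converse computation (the real part of the rational function $\frac{1+h}{1-h}-\sum_j\frac{1}{\beta_j-1}\frac{\sigma_j+z}{\sigma_j-z}$ vanishes on $\de\D$, hence at $0$). One small point worth making explicit in your write-up is that $\beta_f(\sigma_j)>1$ (so the quantities $\frac{1}{\beta_j-1}$ are positive and finite); this follows from \eqref{eq:1.Jzzz} together with $|f'(0)|<1$, which holds since $f\notin\Aut(\D)$.
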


\begin{proof}
First of all, let $\phi_{z_0}(z)=(z_0-z)/(1-\overline{z_0}z)$. Then $\tilde f=\phi_{z_0}\circ f\circ\phi_{z_0}$ satisfies $\tilde f(0)=0$ and $\tilde f'(0)=f'(z_0)$. Moreover 
if we put $\tilde\sigma_j=\phi_{z_0}(\sigma_j)$ then we have $\tilde f(\tilde\sigma_j)=\tilde\sigma_j$ and $\beta_{\tilde f}(\tilde\sigma_j)=\beta_f(\sigma_j)$. Therefore in the proof without loss
of generality we can assume $z_0=0$. 

For $j=1,\ldots,n$ set $\beta_j=\beta_f(\sigma_j)$. 
We would like to prove, by induction on~$n$, that
\begin{equation}
\begin{aligned}
\frac{1-\left|f(z)/z\right|^2}{\left|1-f(z)/z\right|^2}&=\Re\left(\frac{1+f(z)/z}{1-f(z)/z}\right)\\
&\ge\sum_{j=1}^n\frac{1}{\beta_j-1}\Re\left(\frac{\sigma_j+z}{\sigma_j-z}\right)
=\sum_{j=1}^n\frac{1}{\beta_j-1}\frac{1-|z|^2}{|\sigma_j-z|^2}
\end{aligned}
\label{eq:1.CPu}
\end{equation}
for all $z\in\D$, with equality at one point (and hence everywhere) if and only if $f\in\mathcal{B}_{n+1}$. Clearly, when $z=0$ the expression $f(z)/z$ is replaced by $f'(0)$, and thus the theorem follows taking $z=0$.

For $n=1$ \eqref{eq:1.CPu} follows from Corollary~\ref{th:1.CP}.
Assume it is true for $n-1$. In particular we have
\begin{equation}
\Re\left[\frac{1+f(z)/z}{1-f(z)/z}-\sum_{j=1}^{n-1}\frac{1}{\beta_j-1}\frac{\sigma_j+z}{\sigma_j-z}\right]\ge 0\;,
\label{eq:1.CPz}
\end{equation}
with equality at one point (and hence everywhere) if and only if $f\in\mathcal{B}_n$. 
Therefore we can find $h\in\Hol(\D,\C)$ with $h(\D)\subset\overline{\D}$ so that
\begin{equation}
\frac{1+f(z)/z}{1-f(z)/z}-\sum_{j=1}^{n-1}\frac{1}{\beta_j-1}\frac{\sigma_j+z}{\sigma_j-z}=\frac{1+h(z)}{1-h(z)}\;.
\label{eq:1.CPmu}
\end{equation}
Notice that $h(\D)\subseteq\D$ unless in \eqref{eq:1.CPz} we have equality at one point (and hence everywhere); in that case $h\equiv e^{i\theta}$ for a suitable $\theta\in\R$.

If $h\equiv e^{i\theta}$, Lemma~\ref{th:1.Bz} shows that $f(z)=zB(z)$, where $B\in\mathcal{B}_{n-1}$. But then $f$, being rational of degree~$n$, can have at most $n$ fixed points, whereas we are assuming that it has $n+1$ fixed points, contradiction. Thus $h$ cannot be a constant, and we have the strict inequality in \eqref{eq:1.CPz}.

A quick computation shows that 
\[
h(z)=\frac{2\frac{f(z)}{z}-\left(1-\frac{f(z)}{z}\right)S(z)}{2-\left(1-\frac{f(z)}{z}\right)S(z)}\;,
\]
where
\[
S(z)=\sum_{j=1}^{n-1}\frac{1}{\beta_j-1}\frac{\sigma_j+z}{\sigma_j-z}\;,
\]
with the usual convention of replacing $f(z)/z$ by $f'(0)$ when $z=0$.

Put $g(z)=zh(z)$. Then $g\in\Hol(\D,\D)$, $g(0)=0$ and $g(\sigma_n)=\sigma_n$, because $h(\sigma_n)=1$. Furthermore we have
\[
h'(z)=\frac{1}{\left(2-\left(1-\frac{f(z)}{z}\right)S(z)\right)^2}\left[\frac{4}{z}\left(f'(z)-\frac{f(z)}{z}\right)+O\left(1-\frac{f(z)}{z}\right)\right]\;,
\]
and thus
\[
\Klim_{z\to\sigma_n}h'(z)=\overline{\sigma_n}(\beta_n-1)\;.
\]
Since $g'(z)=h(z)+zh'(z)$ we get $g'(\sigma_n)=\beta_n$. Since $h$ is not a constant we can apply Corollary~\ref{th:1.CP} to $g$
obtaining
\begin{equation}
\Re\left(\frac{1+h(z)}{1-h(z)}\right)\ge\frac{1}{\beta_n-1}\Re\left(\frac{\sigma_n+z}{\sigma_n -z}\right)
\label{eq:1.CPzz}
\end{equation}
which recalling the definition of $h$ gives exactly \eqref{eq:1.CPu}. 

If we have equality in one point in \eqref{eq:1.CPu} we must have equality in one point 
in \eqref{eq:1.CPzz}, and this happens if and only if $g$ is a Blaschke product of degree~2, again by Corollary~\ref{th:1.CP}. But this occurs if and only if $h\in\Aut(\D)$; putting this in \eqref{eq:1.CPmu}
we get that $f\in\mathcal{B}_{n+1}$ by Lemma~\ref{th:1.Bz}. 

To prove the converse, assume that $f\in\mathcal{B}_{n+1}$ with $f(0)=0$. Then $f(z)=zB(z)$, where $B\in\mathcal{B}_n$, and $\sigma_1,\ldots,\sigma_n\in\de\D$ are the $n$ distinct solutions of $B(z)=1$.  Let $F\colon\C\to\widehat\C$ be defined by
\[
F(z)=\frac{1+B(z)}{1-B(z)}-\sum_{j=1}^{n}\frac{1}{\beta_j-1}\frac{\sigma_j+z}{\sigma_j-z}\;.
\]
Then $\Re F|_{\de\D}\equiv 0$; this implies that $\Re F(0)=0$, which is exactly
\[
\frac{1-|f'(0)|^2}{|1-f'(0)|^2}-\sum_{j=1}^{n}\frac{1}{\beta_j-1}=0\;,
\]
and we are done.
\end{proof}

\section{Multipoint Julia lemmas}
\label{sec:3}

Our 2-point Julia lemma has been obtained by applying the classical Julia lemma to the hyperbolic difference quotient $f^*(\cdot,w)$, which is a holomorphic self-map of~$\D$ as soon as $f$ is not an automorphism of~$\D$. But if we also assume that $f$ is not a Blaschke product of degree~2 then by Proposition~\ref{th:1.Blhdq} $f^*(\cdot,w)$ is not
an automorphism of~$\D$, and so \emph{its} hyperbolic difference quotient is a holomorphic self-map of~$\D$ to which we may apply the classical Julia lemma, obtaining a 3-point Julia lemma. 

Clearly this procedure can be iterated; to do so let us introduce some notations.

\begin{Definition}
\label{def:2.mhdq}
Given $k\ge 1$ and $w_1,\ldots,w_k\in\D$ the \emph{hyperbolic $k$-th difference quotient} $\Delta_{w_k,\ldots,w_1}f$ of $f\in\Hol(\D,\D)$ is defined by induction by setting $\Delta_{w_1}f(z)=f^*(z,w_1)$ and 
\[
\Delta_{w_k,\ldots,w_1}f(z)=\Delta_{w_k}(\Delta_{w_{k-1},\ldots,w_1}f)(z)
\]
for $k\ge 2$. 
\end{Definition}

Proposition~\ref{th:1.Blhdq} ensures that $\Delta_{w_k,\ldots,w_1}f\in\Hol(\D,\D)$ as soon as $f$ is not a Blaschke product of degree at most~$k$. Moreover, if $\sigma\in\de\D$ is such that $\beta_f(\sigma)<+\infty$ by applying repeatedly Proposition~\ref{th:1.beta} we see that $\beta_{\Delta_{w_k,\ldots,w_1}f}(\sigma)$ is finite. More precisely, $\beta_{\Delta_{w_k,\ldots,w_1}f}(\sigma)$ can be recursively computed by 
\[
\begin{aligned}
\beta_{\Delta_{w_1,\ldots,w_k}f}&(\sigma)\\
&\!\!\!=\beta_{\Delta_{w_{k-1},\ldots,w_1}f}(\sigma)\frac{1-|\Delta_{w_{k-1},\ldots,w_1}f(w_k)|^2}{|\Delta_{w_{k-1},\ldots,w_1}f(\sigma)-
\Delta_{w_{k-1},\ldots,w_1}f(w_k)|^2}-\frac{1-|w_k|^2}{|\sigma-w_k|^2}\;,
\end{aligned}
\]
and the non-tangential limit $\Delta_{w_{k},\ldots,w_1}f(\sigma)$ is inductively given by
\[
\Delta_{w_{k},\ldots,w_1}f(\sigma)=\overline{\Delta_{w_{k-1},\ldots,w_1}f(\sigma)}\sigma\frac{\Delta_{w_{k-1},\ldots,w_1}f(\sigma)-\Delta_{w_{k-1},\ldots,w_1}f(w_k)}{\overline{\Delta_{w_{k-1},\ldots,w_1}f(\sigma)}-\overline{\Delta_{w_{k-1},\ldots,w_1}f(w_k)}}\frac{\overline{\sigma}-\overline{w_k}}{\sigma-w_k}\;.
\]
In particular we have a multipoint Julia lemma:

\begin{Theorem}
\label{th:1.mpJulia}
Given $k\ge 1$, take $f\in\Hol(\D,\D)$ not a Blaschke product of degree at most~$k$. Let $\sigma\in\de\D$ be such that $\beta_f(\sigma)<+\infty$.
Then
\begin{equation}
\frac{|\Delta_{w_k,\ldots,w_1}f(\sigma)-\Delta_{w_k,\ldots,w_1}f(z)|^2}{1-|\Delta_{w_k,\ldots,w_1}f(z)|^2}\le\beta_{\Delta_{w_k,\ldots,w_1}f}(\sigma)\frac{|\sigma-z|^2}{1-|z|^2}
\label{eq:1.mpJulia}
\end{equation}
for all $z$, $w_1,\ldots,w_k\in\D$. 
Moreover, equality occurs for some $z_0$,~$w_1,\ldots,w_k\in\D$ if and only if it occurs everywhere if and only if $f$ is a Blaschke product of degree~$k+1$.
\end{Theorem}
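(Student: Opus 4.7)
The plan is to apply the classical Julia lemma (Theorem~\ref{th:I.2.cinque}) directly to the holomorphic self-map $g := \Delta_{w_k,\ldots,w_1}f$ of~$\D$. Essentially all the preparatory work has already been carried out in the material immediately preceding the statement, so the argument is mostly a matter of packaging existing results.

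First I would verify that $g\in\Hol(\D,\D)$. Since by hypothesis $f$ is not a Blaschke product of degree at most~$k$, an iterated application of Proposition~\ref{th:1.Blhdq} shows that for every $j=0,1,\ldots,k-1$ the iterate $\Delta_{w_j,\ldots,w_1}f$ (understood to be $f$ when $j=0$) is not a Blaschke product of degree at most~$k-j$; in particular it is not an automorphism of~$\D$, so that its hyperbolic difference quotient $\Delta_{w_{j+1},\ldots,w_1}f$ takes values in~$\D$ by the Schwarz-Pick lemma. Taking $j=k-1$ gives $g\in\Hol(\D,\D)$.

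Next I would check that $\beta_g(\sigma)<+\infty$ and identify its non-tangential boundary value at~$\sigma$. Since $\beta_f(\sigma)<+\infty$, the Julia-Wolff-Carath\'eodory theorem (Theorem~\ref{th:I.2.dieci}) ensures that $f$ has non-tangential limit $f(\sigma)\in\de\D$ at~$\sigma$, so Proposition~\ref{th:1.beta} applies and yields both a finite value of $\beta_{\Delta_{w_1}f}(\sigma)$ and (via Lemma~\ref{th:1.f*bound}) the non-tangential limit $\Delta_{w_1}f(\sigma)\in\de\D$. Iterating this reasoning $k$ times produces the explicit recursion for $\beta_g(\sigma)$ and the value of $g(\sigma)=\Delta_{w_k,\ldots,w_1}f(\sigma)\in\de\D$ already written out just before the statement; in particular $\beta_g(\sigma)<+\infty$. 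Theorem~\ref{th:I.2.cinque} applied to~$g$ then reads exactly as~\eqref{eq:1.mpJulia}.

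For the equality case, I would combine the equality clause of Theorem~\ref{th:I.2.cinque} with Proposition~\ref{th:1.Blhdq}. On the one hand, equality in~\eqref{eq:1.mpJulia} at a single point is equivalent, by the classical Julia lemma, to $g\in\Aut(\D)=\mathcal{B}_1$ and to equality everywhere. On the other hand, $k$ iterations of Proposition~\ref{th:1.Blhdq} show that $g=\Delta_{w_k,\ldots,w_1}f\in\mathcal{B}_1$ if and only if $f\in\mathcal{B}_{k+1}$; conversely, when $f\in\mathcal{B}_{k+1}$ the same characterization guarantees $g\in\Aut(\D)$ for every choice of $w_1,\ldots,w_k\in\D$, so that equality in~\eqref{eq:1.mpJulia} holds for all $z,w_1,\ldots,w_k\in\D$. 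No real obstacle stands in the way: the substantive work has been absorbed into Propositions~\ref{th:1.beta} and~\ref{th:1.Blhdq}, and the multipoint Julia lemma is their natural consequence.
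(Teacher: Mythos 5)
Your proposal is correct and follows exactly the paper's own route: apply the classical Julia lemma to $\Delta_{w_k,\ldots,w_1}f$, and settle the equality case via Proposition~\ref{th:1.Blhdq}. The only difference is that you spell out inside the proof the iterative verifications (that $\Delta_{w_k,\ldots,w_1}f\in\Hol(\D,\D)$ and that $\beta_{\Delta_{w_k,\ldots,w_1}f}(\sigma)<+\infty$) which the paper records in the discussion immediately preceding the theorem.
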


\begin{proof}
The inequality \eqref{eq:1.mpJulia} follows from Theorem~\ref{th:I.2.cinque} applied to $\Delta_{w_k,\ldots,w_1}f$. If we have equality in \eqref{eq:1.mpJulia}
for some $z_0$,~$w_1,\ldots,w_k\in\D$ again Theorem~\ref{th:I.2.cinque} implies that $\Delta_{w_k,\ldots,w_1}f\in\Aut(\D)$, and then Proposition~\ref{th:1.Blhdq} implies that
$f\in\mathcal{B}_{k+1}$. Conversely, $f\in\mathcal{B}_{k+1}$ implies that $\Delta_{w_k,\ldots,w_1}f\in\Aut(\D)$ for all $w_1,\ldots,w_k\in\D$, and thus we have equality in \eqref{eq:1.mpJulia}
for all $z$,~$w_1,\ldots,w_k\in\D$.
\end{proof}

The idea is that we can use this multipoint approach to improve known estimates by involving higher order derivatives. We shall show two examples of this: a strengthened version of Corollary~\ref{th:1.basso} and a generalization of Proposition~\ref{th:1.CPtrue}.

We begin with a reformulation of Theorem~\ref{th:1.mpJulia} which gives a far-reaching generalization of Corollary~\ref{th:1.basso}. 

\begin{Theorem}
\label{th:2.bassogen}
Given $k\ge 0$, let $f\in\Hol(\D,\D)$ be not a Blaschke product of degree at most~$k$. Take $\sigma\in\de\D$ with $\beta_f(\sigma)<+\infty$. Then
\begin{equation}
\beta_f(\sigma)\ge\sum_{j=0}^k\frac{1-|w_{j+1}|^2}{|\sigma-w_{j+1}|^2}\prod_{h=0}^j\frac{|\Delta_{w_h,\ldots,w_1}f(\sigma)-\Delta_{w_h,\ldots,w_1}f(w_{h+1})|^2}{1-|\Delta_{w_h,\ldots,w_1}f(w_{h+1})|^2}
\label{eq:2.bassogen}
\end{equation}
for every $w_1,\ldots,w_{k+1}\in\D$, where $\Delta_{w_h,\ldots,w_1}f=f$ when $h=0$. Furthermore we have equality in \eqref{eq:2.bassogen} for some $w_1,\ldots,w_{k+1}\in\D$ if and only if we have equality for all $w_1,\ldots,w_{k+1}\in\D$ if and only if $f$ is a Blaschke product of degree~$k+1$.
\end{Theorem}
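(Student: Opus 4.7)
The plan is to reduce \eqref{eq:2.bassogen} to a telescoping identity obtained by iterating Proposition~\ref{th:1.beta}. Rewriting the conclusion of that proposition, for any $g\in\Hol(\D,\D)$ with $\beta_g(\sigma)<+\infty$ and any $w\in\D$ we have the exact identity
\[
\beta_g(\sigma)=\frac{|g(\sigma)-g(w)|^2}{1-|g(w)|^2}\left(\beta_{\Delta_w g}(\sigma)+\frac{1-|w|^2}{|\sigma-w|^2}\right),
\]
and the bracketed quantity in turn admits the same kind of expansion if we apply Proposition~\ref{th:1.beta} to $\Delta_w g$.

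The first step is to check that this recursion can be run $k+1$ times with $g_0=f$ and $g_{j}=\Delta_{w_{j},\ldots,w_{1}}f$ for $j=1,\ldots,k+1$. Proposition~\ref{th:1.Blhdq} (applied inductively) guarantees that $g_j\in\Hol(\D,\D)$ for every $j\le k+1$, because $f$ is not a Blaschke product of degree at most~$k$; and the ``in particular'' part of Proposition~\ref{th:1.beta}, again applied inductively, yields $\beta_{g_j}(\sigma)<+\infty$. So each step of the recursion makes sense.

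The second step is to unwind the recursion. Setting
\[
A_h=\frac{|\Delta_{w_h,\ldots,w_1}f(\sigma)-\Delta_{w_h,\ldots,w_1}f(w_{h+1})|^2}{1-|\Delta_{w_h,\ldots,w_1}f(w_{h+1})|^2},\qquad B_j=\frac{1-|w_{j+1}|^2}{|\sigma-w_{j+1}|^2},
\]
a straightforward telescoping gives the identity
\[
\beta_f(\sigma)=\sum_{j=0}^{k}B_j\prod_{h=0}^{j}A_h+\left(\prod_{h=0}^{k}A_h\right)\beta_{\Delta_{w_{k+1},\ldots,w_1}f}(\sigma).
\]
Since every $\Delta_{w_h,\ldots,w_1}f(\sigma)$ has modulus~$1$ (by Lemma~\ref{th:1.f*bound}, iterated) while every $\Delta_{w_h,\ldots,w_1}f(w_{h+1})$ lies in~$\D$, each $A_h$ is strictly positive; in particular the final term is non-negative, and dropping it yields exactly \eqref{eq:2.bassogen}.

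For the equality case, the displayed identity shows that equality in \eqref{eq:2.bassogen} is equivalent to $\beta_{\Delta_{w_{k+1},\ldots,w_1}f}(\sigma)=0$. By Lemma~\ref{th:I.2.quattro} this happens if and only if $\Delta_{w_{k+1},\ldots,w_1}f$ is a constant of modulus~$1$, i.e.\ an element of~$\mathcal{B}_0$; by Proposition~\ref{th:1.Blhdq} (iterated $k+1$ times) this is equivalent to $f\in\mathcal{B}_{k+1}$. Since this characterization is independent of the $w_j$'s, equality at one choice of $(w_1,\ldots,w_{k+1})$ forces equality at every choice. I do not expect any serious obstacle here: the only point that needs care is keeping track of the indexing in the telescoping and verifying that each iterated hyperbolic difference quotient is genuinely a self-map of~$\D$ with finite boundary dilation coefficient, so that Proposition~\ref{th:1.beta} applies at every step.
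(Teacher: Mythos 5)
Your proof is correct, and it is a close cousin of the paper's argument rather than an identical one. The paper proves \eqref{eq:2.bassogen} by induction on~$k$: the base case $k=0$ is the classical Julia inequality, and the inductive step applies the hypothesis to $\Delta_{w_1}f$ and then folds in the identity $\beta_f(\sigma)=\frac{|f(\sigma)-f(w_1)|^2}{1-|f(w_1)|^2}\bigl[\frac{1-|w_1|^2}{|\sigma-w_1|^2}+\beta_{\Delta_{w_1}f}(\sigma)\bigr]$ from Proposition~\ref{th:1.beta}, tracking the equality case through the induction via the Julia lemma. You instead unroll that same recursion into an exact telescoping identity with the explicit non-negative remainder $\bigl(\prod_{h=0}^{k}A_h\bigr)\beta_{\Delta_{w_{k+1},\ldots,w_1}f}(\sigma)$ and then drop it; this is essentially the alternative route the author mentions (and declines, ``for the sake of variety'') at the start of the proof. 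What your version buys is a cleaner equality analysis: since each $A_h>0$, equality is visibly equivalent to $\beta_{\Delta_{w_{k+1},\ldots,w_1}f}(\sigma)=0$, and hence to $f\in\mathcal{B}_{k+1}$ by the ``Moreover'' clause of Proposition~\ref{th:1.beta} together with Proposition~\ref{th:1.Blhdq}; the independence from the choice of $w_1,\ldots,w_{k+1}$ is then automatic. Two small points of hygiene. First, your claim that $g_j\in\Hol(\D,\D)$ for all $j\le k+1$ is not quite right at $j=k+1$: if $f\in\mathcal{B}_{k+1}$ then $\Delta_{w_{k+1},\ldots,w_1}f$ is a unimodular constant; this is harmless because Proposition~\ref{th:1.beta} is only ever applied to $g_0,\ldots,g_k$, and the quantity $\beta_{g_{k+1}}(\sigma)$ should be read as the liminf on the left-hand side of \eqref{eq:1.beta}, which makes sense regardless. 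Second, Lemma~\ref{th:I.2.quattro} as stated applies only to genuine self-maps of~$\D$, so to conclude ``$\beta_{g_{k+1}}(\sigma)=0$ iff $g_{k+1}\in\mathcal{B}_0$'' you should either invoke the dichotomy that $g_{k+1}$ is a self-map of~$\D$ or a unimodular constant, or more simply cite the ``Moreover'' part of Proposition~\ref{th:1.beta} applied to $g_k$. Neither point is a gap in substance.
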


\begin{proof}
One way to prove the assertion is to obtain by induction a formula for $\beta_{\Delta_{w_k,\ldots,w_1}f}(\sigma)$ applying repeatedly Proposition~\ref{th:1.beta}, and then to show that, with this formula, \eqref{eq:2.bassogen} is equivalent to \eqref{eq:1.mpJulia}. For the sake of variety we shall describe a different proof, relying on the
classical Julia lemma.

We proceed by induction on~$k$. The case $k=0$ is
\begin{equation}
\beta_f(\sigma)\ge\frac{1-|w_1|^2}{|\sigma-w_1|^2}\frac{|f(\sigma)-f(w_1)|^2}{1-|f(w_1)|^2}
\label{eq:2.basso0}
\end{equation}
which is exactly the classical Julia inequality \eqref{eq:I.2.nove}. In particular, we have equality for some $w_1\in\D$ (and hence for all $w_1\in\D$) if and only if $f\in\Aut(\D)$.

Assume that \eqref{eq:2.bassogen} holds for $k-1$, and take $w_1\in\D$. Since $f$ is not a Blaschke product of degree at most~$k$, by Proposition~\ref{th:1.Blhdq} $\Delta_{w_1}f$ is not a Blaschke product of degree at most $k-1$. So by the inductive hypothesis we have
\[
\begin{aligned}
\beta_{\Delta_{w_1}f}(\sigma)&\ge\sum_{j=1}^{k}\frac{1-|w_{j+1}|^2}{|\sigma-w_{j+1}|^2}\prod_{h=1}^j\frac{|\Delta_{w_h,\ldots,w_2}(\Delta_{w_1}f)(\sigma)-\Delta_{w_h,\ldots,w_2}(\Delta_{w_1}f)(w_{h+1})|^2}{1-|\Delta_{w_h,\ldots,w_2}(\Delta_{w_1}f)(w_{h+1})|^2}\\
&=\sum_{j=1}^{k}\frac{1-|w_{j+1}|^2}{|\sigma-w_{j+1}|^2}\prod_{h=1}^j\frac{|\Delta_{w_h,\ldots,w_1}f(\sigma)-\Delta_{w_h,\ldots,w_1}f(w_{h+1})|^2}{1-|\Delta_{w_h,\ldots,w_1}f(w_{h+1})|^2}
\end{aligned}
\]
for all $w_2,\ldots, w_{k+1}\in\D$, with equality for some (and hence all) $w_2,\ldots, w_{k+1}\in\D$ if and only if $\Delta_{w_1}f$ is a Blaschke product of degree~$k$, that is,
by Proposition~\ref{th:1.Blhdq}, if and only if $f$ is a Blaschke product of degree~$k+1$. 

Now Proposition~\ref{th:1.beta} yields
\[
\beta_f(\sigma)=\frac{|f(\sigma)-f(w_1)|^2}{1-|f(w_1)|^2}\left[\frac{1-|w_1|^2}{|\sigma-w_1|^2}+\beta_{\Delta_{w_1}f}(\sigma)\right]\;;
\]
therefore
\[
\begin{aligned}
\beta_f(\sigma)&\ge\frac{|f(\sigma)-f(w_1)|^2}{1-|f(w_1)|^2}\\
&\quad\times\!\Biggl[\frac{1-|w_1|^2}{|\sigma-w_1|^2}
+\sum_{j=1}^{k}\frac{1-|w_{j+1}|^2}{|\sigma-w_{j+1}|^2}\prod_{h=1}^j\frac{|\Delta_{w_h,\ldots,w_1}f(\sigma)-\Delta_{w_h,\ldots,w_1}f(w_{h+1})|^2}{1-|\Delta_{w_h,\ldots,w_1}f(w_{h+1})|^2}\Biggr]\\
&=\sum_{j=0}^{k}\frac{1-|w_{j+1}|^2}{|\sigma-w_{j+1}|^2}\prod_{h=0}^j\frac{|\Delta_{w_h,\ldots,w_1}f(\sigma)-\Delta_{w_h,\ldots,w_1}f(w_{h+1})|^2}{1-|\Delta_{w_h,\ldots,w_1}f(w_{h+1})|^2}\;,
\end{aligned}
\]
with equality for some (and hence all) $w_1,\ldots, w_{k+1}\in\D$ if and only if $f$ is a Blaschke product of degree~$k+1$, and we are done.
\end{proof}

\begin{Remark}
\label{rem:2.bassogen}
It is easy to see that
the estimate \eqref{eq:2.bassogen} becomes better and better as $k$ increases.
\end{Remark}

Theorem~\ref{th:2.bassogen} has a number of corollaries that it is worthwhile to state.

\begin{Corollary}
\label{th:2.bassogenuno}
Given $k\ge 0$, let $f\in\Hol(\D,\D)$ be not a Blaschke product of degree at most~$k$. Take $\sigma\in\de\D$ with $\beta_f(\sigma)<+\infty$. Then
\begin{equation}
\beta_f(\sigma)\ge\sum_{j=0}^k\frac{1-|w_{j+1}|^2}{|\sigma-w_{j+1}|^2}\prod_{h=0}^j\frac{1-|\Delta_{w_h,\ldots,w_1}f(w_{h+1})|}{1+|\Delta_{w_h,\ldots,w_1}f(w_{h+1})|}
\label{eq:2.bassogenuno}
\end{equation}
for every $w_1,\ldots,w_{k+1}\in\D$, where $\Delta_{w_h,\ldots,w_1}f=f$ when $h=0$. Furthermore we have equality in \eqref{eq:2.bassogenuno} if and only if $f$ is a Blaschke product of degree~$k+1$ and $w_1,\ldots,w_{k+1}\in\D$ are such that
\[
\Delta_{w_h,\ldots,w_1}f(w_{h+1})=|\Delta_{w_h,\ldots,w_1}f(w_{h+1})|\Delta_{w_h,\ldots,w_1}f(\sigma)
\]
for all $h=0,\ldots,k$.
\end{Corollary}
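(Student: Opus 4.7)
The plan is to deduce Corollary~\ref{th:2.bassogenuno} from Theorem~\ref{th:2.bassogen} by replacing, term by term, each factor of the form
\[
\frac{|\Delta_{w_h,\ldots,w_1}f(\sigma)-\Delta_{w_h,\ldots,w_1}f(w_{h+1})|^2}{1-|\Delta_{w_h,\ldots,w_1}f(w_{h+1})|^2}
\]
by a simpler lower bound. Note that in each such factor the numerator has the form $|a-b|^2$ with $a=\Delta_{w_h,\ldots,w_1}f(\sigma)\in\de\D$ (by Definition~\ref{def:1.f*sigma} and the fact that iterated hyperbolic difference quotients of $f$ inherit a unimodular boundary value at $\sigma$) and $b=\Delta_{w_h,\ldots,w_1}f(w_{h+1})\in\D$ (since $f$ is not a Blaschke product of degree at most $k$, Proposition~\ref{th:1.Blhdq} guarantees that all the iterated difference quotients remain in $\Hol(\D,\D)$).

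The elementary step is the following: for any $a\in\de\D$ and $b\in\D$ one has
\[
\frac{|a-b|^2}{1-|b|^2}=\frac{|a-b|^2}{(1-|b|)(1+|b|)}\ge\frac{(1-|b|)^2}{(1-|b|)(1+|b|)}=\frac{1-|b|}{1+|b|},
\]
where the middle inequality uses the reverse triangle inequality $|a-b|\ge|a|-|b|=1-|b|$. The reverse triangle inequality is an equality if and only if $a$ and $b$ are non-negative real multiples of the same unit vector, i.e.\ $b=|b|\,a$.

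Applying this bound to each of the $j+1$ factors in the $j$-th summand of \eqref{eq:2.bassogen} gives
\[
\prod_{h=0}^j\frac{|\Delta_{w_h,\ldots,w_1}f(\sigma)-\Delta_{w_h,\ldots,w_1}f(w_{h+1})|^2}{1-|\Delta_{w_h,\ldots,w_1}f(w_{h+1})|^2}\ge\prod_{h=0}^j\frac{1-|\Delta_{w_h,\ldots,w_1}f(w_{h+1})|}{1+|\Delta_{w_h,\ldots,w_1}f(w_{h+1})|},
\]
and multiplying by $(1-|w_{j+1}|^2)/|\sigma-w_{j+1}|^2$ and summing over $j$ yields \eqref{eq:2.bassogenuno} thanks to \eqref{eq:2.bassogen}.

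For the equality discussion, note that the right-hand side of \eqref{eq:2.bassogenuno} is no larger than the right-hand side of \eqref{eq:2.bassogen}, which in turn is bounded above by $\beta_f(\sigma)$. Therefore equality in \eqref{eq:2.bassogenuno} forces simultaneously equality in \eqref{eq:2.bassogen} -- which by Theorem~\ref{th:2.bassogen} is equivalent to $f$ being a Blaschke product of degree $k+1$ -- and equality in every one of the elementary inequalities above, which amounts to the alignment conditions $\Delta_{w_h,\ldots,w_1}f(w_{h+1})=|\Delta_{w_h,\ldots,w_1}f(w_{h+1})|\,\Delta_{w_h,\ldots,w_1}f(\sigma)$ for $h=0,\ldots,k$. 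The converse is immediate by reversing the chain of implications. The only mild subtlety is verifying that $\Delta_{w_h,\ldots,w_1}f(\sigma)\in\de\D$ is actually the relevant unit-modulus quantity, but this is precisely what the inductive formula for the non-tangential limit recalled just before Theorem~\ref{th:1.mpJulia} provides; there is no real obstacle beyond bookkeeping.
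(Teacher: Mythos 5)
Your proof is correct and is essentially the paper's own argument: the paper deduces the corollary from Theorem~\ref{th:2.bassogen} by exactly the same elementary estimate $|\tau-z|\ge 1-|z|$ for $\tau\in\de\D$, $z\in\D$, with equality if and only if $z=|z|\tau$, applied factor by factor. Your equality discussion (term-by-term and factor-by-factor rigidity, plus invoking the equality case of Theorem~\ref{th:2.bassogen}) matches what the paper leaves implicit.
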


\begin{proof}
It follows from \eqref{eq:2.bassogen} using the standard estimate $|\tau-z|\ge 1-|z|$ valid for all $\tau\in\de\D$ and $z\in\D$, with equality if and only if $z=|z|\tau$.
\end{proof}

\begin{Remark}
\label{rem:2.Mercer2}
If take $k=1$ and $w_1=w_2=z$ and we assume $\sigma=f(\sigma)=1$ then \eqref{eq:2.bassogenuno} becomes exactly \cite{Mercer2018}*{Theorem~2.1}.
\end{Remark}

\begin{Corollary}
\label{th:2.bassot}
Given $k\ge 0$ let $f\in\Hol(\D,\D)$ be not a Blaschke product of degree at most~$k$. Take $\sigma\in\de\D$ with $\beta_f(\sigma)<+\infty$. Then
\begin{equation}
\begin{aligned}
\beta_f(\sigma)&\ge\sum_{j=0}^k\prod_{h=0}^j\frac{|\Delta_{\mathbf{O}_h}f(\sigma)-\Delta_{\mathbf{O}_h}f(0)|^2}{1-|\Delta_{\mathbf{O}_h}f(0)|^2}
&\ge\sum_{j=0}^k\prod_{h=0}^j\frac{1-|\Delta_{\mathbf{O}_h}f(0)|}{1+|\Delta_{\mathbf{O}_h}f(0)|}
\end{aligned}
\label{eq:2.bassot}
\end{equation}
where $\mathbf{O}_h=(0,\ldots,0)\in\D^h$ is the origin of~$\C^h$, and $\Delta_{\mathbf{O}_h}f=f$ when $h=0$. Furthermore we have equality on the left of \eqref{eq:2.bassot} if and only if $f$ is a Blaschke product of degree~$k+1$, and on the right if and only if 
\[
\Delta_{\mathbf{O}_h}f(0)=|\Delta_{\mathbf{O}_h}f(0)|\Delta_{\mathbf{O}_h}f(\sigma)
\]
for all $h=0,\ldots,k$.
\end{Corollary}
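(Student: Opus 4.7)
The plan is to obtain both inequalities in \eqref{eq:2.bassot} as immediate specializations of Theorem~\ref{th:2.bassogen}, first by setting all the parameters $w_i$ equal to~$0$, and then by applying an elementary Euclidean estimate factor by factor.

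First I would take the multipoint estimate \eqref{eq:2.bassogen} and substitute $w_1=\cdots=w_{k+1}=0$. Since $|\sigma|=1$, the geometric prefactor simplifies to
\[
\frac{1-|w_{j+1}|^2}{|\sigma-w_{j+1}|^2}=\frac{1}{|\sigma|^2}=1\;,
\]
and by definition $\Delta_{w_h,\ldots,w_1}f=\Delta_{\mathbf{O}_h}f$ when $w_1=\cdots=w_h=0$. Plugging these into \eqref{eq:2.bassogen} yields exactly the left-hand inequality in \eqref{eq:2.bassot}. The equality clause of Theorem~\ref{th:2.bassogen} then gives equality in this step if and only if $f$ is a Blaschke product of degree~$k+1$.

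Next I would derive the right-hand inequality one factor at a time. For every $h=0,\ldots,k$ the point $\Delta_{\mathbf{O}_h}f(\sigma)$ lies on $\de\D$ (by Definition~\ref{def:1.f*sigma} applied inductively) while $\Delta_{\mathbf{O}_h}f(0)\in\D$; so the elementary inequality $|\tau-z|\ge 1-|z|$ for $\tau\in\de\D$ and $z\in\D$, combined with the factorization $1-|z|^2=(1-|z|)(1+|z|)$, gives
\[
\frac{|\Delta_{\mathbf{O}_h}f(\sigma)-\Delta_{\mathbf{O}_h}f(0)|^2}{1-|\Delta_{\mathbf{O}_h}f(0)|^2}\ge\frac{1-|\Delta_{\mathbf{O}_h}f(0)|}{1+|\Delta_{\mathbf{O}_h}f(0)|}\;,
\]
with equality precisely when $\Delta_{\mathbf{O}_h}f(0)$ lies on the radius from $0$ to $\Delta_{\mathbf{O}_h}f(\sigma)$, that is when $\Delta_{\mathbf{O}_h}f(0)=|\Delta_{\mathbf{O}_h}f(0)|\Delta_{\mathbf{O}_h}f(\sigma)$. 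Multiplying these nonnegative estimates for $h=0,\ldots,j$ and summing over $j$ gives the right-hand inequality in \eqref{eq:2.bassot}; equality in the right-hand inequality then holds if and only if the alignment condition holds for each $h=0,\ldots,k$.

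There is no real obstacle here: the corollary is a clean specialization plus a one-line Euclidean bound. The only thing requiring a modicum of care is keeping track of the two separate equality statements, namely that equality in the leftmost $\ge$ characterizes Blaschke products of degree $k+1$ (coming from Theorem~\ref{th:2.bassogen}), while equality in the rightmost $\ge$ is governed solely by the geometric alignment of $\Delta_{\mathbf{O}_h}f(0)$ with $\Delta_{\mathbf{O}_h}f(\sigma)$ for each $h$.
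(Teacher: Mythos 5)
Your proposal is correct and follows essentially the same route as the paper: the paper's proof of Corollary~\ref{th:2.bassot} is exactly the specialization $w_1=\cdots=w_{k+1}=0$ in Theorem~\ref{th:2.bassogen}, and the passage to the rightmost bound via $|\tau-z|\ge 1-|z|$ (with equality iff $z=|z|\tau$) is the same elementary step the paper uses for Corollary~\ref{th:2.bassogenuno}. Your bookkeeping of the two separate equality clauses, including the observation that all factors are strictly positive so that equality of the sums forces equality factor by factor, is accurate.
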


\begin{proof}
It follows from Theorem~\ref{th:2.bassogen} taking $w_h=0$ for $h=1,\ldots,k+1$.
\end{proof}

\begin{Remark}
\label{rem:2.bassot}
\eqref{eq:2.bassot} for $k=0$ is exactly \eqref{eq:I.2.basso}, while for $k=1$ it yields \eqref{eq:1.Oss}, because $\Delta_0f(0)=f^h(0)$. \end{Remark}

\begin{Corollary}
\label{th:2.bassoinf}
Let $f\in\Hol(\D,\D)$ be not a Blaschke product, and $\sigma\in\de\D$ with $\beta_f(\sigma)<+\infty$. Then
\begin{equation}
\begin{aligned}
\label{eq:2.bassoinf}
\beta_f(\sigma)&\ge\sum_{j=0}^\infty\frac{1-|w_{j+1}|^2}{|\sigma-w_{j+1}|^2}\prod_{h=0}^j\frac{|\Delta_{w_h,\ldots,w_1}f(\sigma)-\Delta_{w_h,\ldots,w_1}f(w_{h+1})|^2}{1-|\Delta_{w_h,\ldots,w_1}f(w_{h+1})|^2}\\
&\ge\sum_{j=0}^\infty\frac{1-|w_{j+1}|^2}{|\sigma-w_{j+1}|^2}\prod_{h=0}^j\frac{1-|\Delta_{w_h,\ldots,w_1}f(w_{h+1})|}{1+|\Delta_{w_h,\ldots,w_1}f(w_{h+1})|}
\end{aligned}
\end{equation}
for any sequence $\{w_h\}\subset\D$, where $\Delta_{w_h,\ldots,w_1}f=f$ when $h=0$ as usual. In particular,
\[
\begin{aligned}
\beta_f(\sigma)&\ge\sum_{j=0}^\infty\prod_{h=0}^j\frac{|\Delta_{\mathbf{O}_h}f(\sigma)-\Delta_{\mathbf{O}_h}f(0)|^2}{1-|\Delta_{\mathbf{O}_h}f(0)|^2}
&\ge\sum_{j=0}^\infty\prod_{h=0}^j\frac{1-|\Delta_{\mathbf{O}_h}f(0)|}{1+|\Delta_{\mathbf{O}_h}f(0)|}\;.
\end{aligned}
\]
\end{Corollary}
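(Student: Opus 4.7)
The plan is to derive both bounds as limits, as $k\to\infty$, of the finite-$k$ inequalities already established in Theorem~\ref{th:2.bassogen}, Corollary~\ref{th:2.bassogenuno} and Corollary~\ref{th:2.bassot}. First I would note that, by hypothesis, $f$ is not a finite Blaschke product, so \emph{a fortiori} $f$ is not a Blaschke product of degree at most $k$ for any $k\ge 0$. Therefore Theorem~\ref{th:2.bassogen} applies for every $k$, and once an infinite sequence $\{w_h\}\subset\D$ has been fixed it provides the uniform upper bound $\beta_f(\sigma)$ on each finite partial sum
\[
S_k := \sum_{j=0}^k \frac{1-|w_{j+1}|^2}{|\sigma-w_{j+1}|^2}\prod_{h=0}^j\frac{|\Delta_{w_h,\ldots,w_1}f(\sigma)-\Delta_{w_h,\ldots,w_1}f(w_{h+1})|^2}{1-|\Delta_{w_h,\ldots,w_1}f(w_{h+1})|^2}.
\]

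Next I would check that $\{S_k\}$ is monotone non-decreasing. The key observation is that every summand is non-negative: by Proposition~\ref{th:1.Blhdq}, since $f$ is not a Blaschke product of degree at most $h$, each iterated hyperbolic difference quotient $\Delta_{w_h,\ldots,w_1}f$ is a holomorphic self-map of~$\D$, so $|\Delta_{w_h,\ldots,w_1}f(w_{h+1})|<1$ and the denominators in the product are strictly positive; the numerators are squared moduli, and the factor $(1-|w_{j+1}|^2)/|\sigma-w_{j+1}|^2$ is strictly positive as well. Hence $\{S_k\}$ is a bounded monotone sequence, and the limit $k\to\infty$ yields the first inequality of \eqref{eq:2.bassoinf}.

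The second inequality of \eqref{eq:2.bassoinf} is obtained by exactly the same monotone limiting argument applied to Corollary~\ref{th:2.bassogenuno}, or equivalently by using the elementary estimate $|\tau-z|^2/(1-|z|^2)\ge (1-|z|)/(1+|z|)$ for $\tau\in\de\D$ and $z\in\D$ inside each finite sum before letting $k\to\infty$. Finally, the two displayed inequalities with $w_h=0$ follow from the same passage to the limit applied to Corollary~\ref{th:2.bassot}, noting that $(1-|w_{j+1}|^2)/|\sigma-w_{j+1}|^2=1$ when $w_{j+1}=0$.

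No step is a genuine obstacle: the whole proof is a monotone-convergence limit in a chain of inequalities already proved. The only point to verify is the non-negativity of each summand, which is immediate from the fact that all the iterated hyperbolic difference quotients are self-maps of~$\D$.
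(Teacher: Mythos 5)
Your proposal is correct and follows exactly the paper's (very terse) argument: the finite-$k$ bounds of Theorem~\ref{th:2.bassogen} and Corollary~\ref{th:2.bassot} give a uniform bound $\beta_f(\sigma)$ on the partial sums, which are non-decreasing because every summand is non-negative (this monotonicity is the content of Remark~\ref{rem:2.bassogen}), so one passes to the limit. Your explicit verification that the summands are non-negative, via Proposition~\ref{th:1.Blhdq} and the hypothesis that $f$ is not a finite Blaschke product of any degree, is the one detail the paper leaves implicit, and you have it right.
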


\begin{proof}
It follows from Theorem~\ref{th:2.bassogen}, Corollary~\ref{th:2.bassot} and Remark~\ref{rem:2.bassogen}.
\end{proof}

A natural question, that we leave open, is whether the first inequality in \eqref{eq:2.bassoinf} actually is an equality, at least when $f$ is an infinite Blaschke product. 

To give an idea of the actual content of \eqref{eq:2.bassogen}, let us reformulate it for small values of~$k$ and particular values of~$w_1,\ldots,w_{k+1}$. 

For $k=0$ we get \eqref{eq:2.basso0}, that we already noticed to be equivalent to the classical Julia lemma.

For $k=1$ we get
\[
\beta_f(\sigma)\ge\frac{|f(\sigma)-f(w_1)|^2}{1-|f(w_1)|^2}\left[\frac{1-|w_1|^2}{|\sigma-w_1|^2}+\frac{1-|w_2|^2}{|\sigma-w_2|^2}\frac{|\Delta_{w_1}f(\sigma)-\Delta_{w_1}f(w_2)|^2}{1-|\Delta_{w_1}f(w_2)|^2}\right]\;.
\]
Since $\Delta_0f(0)=f^h(0)$ and
\begin{equation}
\Delta_0f(\sigma)=\overline{f(\sigma)}\overline{\sigma}\frac{f(\sigma)-f(0)}{\overline{f(\sigma)}-\overline{f(0)}}\;,
\label{eq:2.Ds0}
\end{equation}
putting $w_1=w_2=0$ we obtain
\[
\beta_f(\sigma)\ge\frac{|f(\sigma)-f(0)|^2}{1-|f(0)|^2}\left[1+\frac{\left|\overline{f(\sigma)}\overline{\sigma}\frac{f(\sigma)-f(0)}{\overline{f(\sigma)}-\overline{f(0)}}-f^h(0)\right|^2}{1-|f^h(0)|^2}\right]\;,
\]
which is a slightly more precise version of \eqref{eq:1.Oss}. If moreover $f(0)=0$ we find again~\eqref{eq:1.Jzz}.

The case $k=2$ with $w_1=w_2=w_3=0$ yields
\begin{equation}
\begin{aligned}
\beta_f(\sigma)&\ge\frac{|f(\sigma)-f(0)|^2}{1-|f(0)|^2}\\
&\quad\times\left[1+\frac{\left|\overline{f(\sigma)}\overline{\sigma}\frac{f(\sigma)-f(0)}{\overline{f(\sigma)}-\overline{f(0)}}-f^h(0)\right|^2}{1-|f^h(0)|^2}
\left(1+\frac{|\Delta_{0,0}f(\sigma)-(\Delta_0f)^h(0)|^2}{1-|(\Delta_0f)^h(0)|^2}\right)\right]\;,
\end{aligned}
\label{eq:2.basso2}
\end{equation}
where we have used the equality $\Delta_{0,0}f(0)=(\Delta_0f)^h(0)$.
To compute $(\Delta_0f)^h(0)$ first all we notice that
\[
\begin{aligned}
(\Delta_{w_0}f)'(z)&=\frac{f'(z)(z-w_0)-\bigl(f(z)-f(w_0)\bigr)}{(z-w_0)^2}\cdot\frac{1-\overline{w_0}z}{1-\overline{f(w_0)}f(z)}\\
&\quad+\frac{f(z)-f(w_0)}{z-w_0}\cdot\frac{-\overline{w_0}\bigl(1-\overline{f(w_0)}f(z)\bigr)+(1-\overline{w_0}z)\overline{f(w_0)}f'(z)}{\bigl(1-\overline{f(w_0)}f(z)\bigr)^2}\;,
\end{aligned}
\]
and so
\[
\begin{aligned}
(\Delta_{w_0}f)^h(0)=\frac{1}{1-|\Delta_{w_0}f(0)|^2}\biggl[&\frac{f(w_0)-f(0)-f'(0)w_0}{w_0^2}\cdot\frac{1}{1-\overline{f(w_0)}f(0)}\\
&+\frac{f(w_0)-f(0)}{w_0}\frac{\overline{f(w_0)}f'(0)-\overline{w_0}\bigl(1-\overline{f(w_0)}f(0)\bigr)}{\bigl(1-\overline{f(w_0)}f(0)\bigr)^2}\biggr].
\end{aligned}
\]
In particular putting $w_0=0$ we get
\begin{equation}
\begin{aligned}
(\Delta_{0}f)^h(0)&=\frac{1}{1-|f^h(0)|^2}\biggl[\frac{f''(0)}{2\bigl(1-|f(0)|^2\bigr)}+\frac{\overline{f(0)}f'(0)^2}{\bigl(1-|f(0)|^2\bigr)^2}\biggr]\\
&=\frac{1}{1-|f^h(0)|^2}\biggl[\frac{f''(0)}{2\bigl(1-|f(0)|^2\bigr)}+\overline{f(0)}f^h(0)^2\biggr]\;.
\end{aligned}
\label{eq:2.dzh}
\end{equation}
Applying \eqref{eq:2.Ds0} to $\Delta_0f$ we also get
\[
\Delta_{0,0}f(\sigma)=f(\sigma)\frac{\overline{f(\sigma)}-\overline{f(0)}}{f(\sigma)-f(0)}\frac{\Delta_0f(\sigma)-f^h(0)}{\overline{\Delta_0f(\sigma)}-\overline{f^h(0)}}\;,
\]
and we have all the terms appearing in \eqref{eq:2.basso2}. In particular,
\[
\beta_f(\sigma)\ge\frac{1-|f(0)|}{1+|f(0)|}\left[1+\frac{1-|f^h(0)|}{1+|f^h(0)|}\frac{2}{1+|(\Delta_0f)^h(0)|}\right]\;,
\]
and thus if $f(0)=0$ we obtain
\[
\beta_f(\sigma)\ge 1+\frac{2(1-|f'(0)|)}{1+|f'(0)|+\frac{|f''(0)|}{2(1-|f'(0)|)}}
\]
that improves \eqref{eq:1.Jzz}. 
If moreover $f'(0)=0$ we also get 
\[
\beta_f(\sigma)\ge 1+\frac{2}{1+\frac{1}{2}|f''(0)|}\ge 2
\]
because $\frac{1}{2}|f''(0)|=|\Delta_{0,0}f(0)|\le 1$. 

As a final example, we record that a similar argument with $k=3$ and $w_1=w_2=w_3=w_4=0$ yields
\[
\beta_f(\sigma)\ge\frac{1-|f(0)|}{1+|f(0)|}\biggl[\frac{1-|f^h(0)|}{1+|f^h(0)|}
\left(\frac{1-|(\Delta_{0}f)^h(0)|}{1+|(\Delta_{0}f)^h(0)|}\frac{2}{1+|(\Delta_{0,0}f)^h(0)|}+1\right)+1\biggr]\;,
\]
where
\[
(\Delta_{0,0}f)^h(0)=\frac{1}{1-|(\Delta_{0}f)^h(0)|^2}\biggl[\frac{(\Delta_{0}f)''(0)}{2\bigl(1-|f^h(0)|^2\bigr)}+\overline{f^h(0)}(\Delta_{0}f)^h(0)^2\biggr]\;,
\]
with
\[
(\Delta_{0}f)''(0)=\frac{1}{1-|f(0)|^2}\biggl[\frac{1}{3}f'''(0)+2\overline{f(0)}f^h(0)f''(0)+\overline{f(0)}^2f^h(0)^2f'(0)\biggr]\;.
\]

We now proceed toward the promised generalization of Proposition~\ref{th:1.CPtrue}. 
Let us start with the following reformulation of the case $k=2$ of Theorem~\ref{th:1.mpJulia} valid when~$f(0)=0$:

\begin{Proposition}
\label{th:2.CP}
Let $f\in\Hol(\D,\D)$, not a Blaschke product of degree at most~2, and $\sigma\in\de\D$ be such that $\beta_f(\sigma)<+\infty$.
Assume that $f(0)=0$. Then 
\begin{equation}
\frac{\left|\frac{\overline{\sigma}\bigl(f(\sigma)\overline{\sigma}-f'(0)\bigr)}{1-\overline{f'(0)}f(\sigma)\overline{\sigma}}-\frac{\frac{1}{z}\left(\frac{f(z)}{z}-f'(0)\right)}{1-\overline{f'(0)}\frac{f(z)}{z}}\right|^2}{1-\left|\frac{\frac{1}{z}\left(\frac{f(z)}{z}-f'(0)\right)}{1-\overline{f'(0)}\frac{f(z)}{z}}\right|^2}\le\left[\frac{1-|f'(0)|^2}{\left|\frac{f(\sigma)}{\sigma}-f'(0)\right|^2}\bigl(\beta_f(\sigma)-1\bigr)-1\right]\!\frac{|\sigma-z|^2}{1-|z|^2}
\label{eq:2.Jz}
\end{equation}
for $z\ne 0$, and
\begin{equation}
\frac{\left|\frac{\overline{\sigma}\bigl(f(\sigma)\overline{\sigma}-f'(0)\bigr)}{1-\overline{f'(0)}f(\sigma)\overline{\sigma}}-\frac{f''(0)}{2\bigl(1-|f'(0)|^2\bigr)}\right|^2}{1-\left|\frac{f''(0)}{2\bigl(1-|f'(0)|^2\bigr)}\right|^2}\le\frac{1-|f'(0)|^2}{\left|\frac{f(\sigma)}{\sigma}-f'(0)\right|^2}\bigl(\beta_f(\sigma)-1\bigr)-1\;.
\label{eq:2.Jzz}
\end{equation}
Moreover, equality occurs in \eqref{eq:2.Jz} at some $z_0\in\D\setminus\{0\}$ or in \eqref{eq:2.Jzz} if and only if it always occurs if and only $f$ is a Blaschke product of degree~3.
\end{Proposition}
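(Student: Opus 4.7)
The plan is to apply Theorem~\ref{th:1.mpJulia} with $k=2$ and $w_1=w_2=0$, and then specialize the resulting inequality to $z=0$ to obtain \eqref{eq:2.Jzz}. Under the hypothesis $f(0)=0$, everything reduces to making the objects $\Delta_{0,0}f(z)$, $\Delta_{0,0}f(0)$, $\Delta_{0,0}f(\sigma)$ and $\beta_{\Delta_{0,0}f}(\sigma)$ explicit, and checking that they coincide with the quantities appearing in \eqref{eq:2.Jz} and \eqref{eq:2.Jzz}.

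First, using $f(0)=0$ together with \eqref{eq:1.recall}, one has $\Delta_0 f(z)=f(z)/z$ for $z\ne 0$ and $\Delta_0 f(0)=f'(0)$. Iterating through $\Delta_{0,0}f(z)=(\Delta_0 f)^*(z,0)$ yields, for $z\ne 0$,
\[
\Delta_{0,0}f(z)=\frac{\frac{1}{z}\bigl(\frac{f(z)}{z}-f'(0)\bigr)}{1-\overline{f'(0)}\frac{f(z)}{z}}\;,
\]
which is exactly the expression inside the modulus on the left-hand side of \eqref{eq:2.Jz}. At $z=0$ one instead has $\Delta_{0,0}f(0)=(\Delta_0 f)^h(0)=(\Delta_0 f)'(0)/(1-|f'(0)|^2)$, and a Taylor expansion at the origin gives $(\Delta_0 f)'(0)=f''(0)/2$, producing the quantity $f''(0)/[2(1-|f'(0)|^2)]$ that appears in \eqref{eq:2.Jzz}.

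For the boundary values I would invoke Lemma~\ref{th:1.f*bound}: together with $f(0)=0$ it yields $\Delta_0 f(\sigma)=f(\sigma)/\sigma$, and applied to $\Delta_0 f$ in place of $f$ it gives
\[
\Delta_{0,0}f(\sigma)=\frac{\overline{\sigma}\bigl(f(\sigma)\overline{\sigma}-f'(0)\bigr)}{1-\overline{f'(0)}f(\sigma)\overline{\sigma}}\;,
\]
which matches the boundary term on the left of both \eqref{eq:2.Jz} and \eqref{eq:2.Jzz}. Proposition~\ref{th:1.beta} applied first to $f$ with $w=0$ (exploiting $|f(\sigma)-f(0)|=|f(\sigma)|=1$ and $|\sigma|=1$) gives $\beta_{\Delta_0 f}(\sigma)=\beta_f(\sigma)-1$, and applied once more to $\Delta_0 f$ with $w=0$ yields
\[
\beta_{\Delta_{0,0}f}(\sigma)=\frac{1-|f'(0)|^2}{|f(\sigma)/\sigma-f'(0)|^2}\bigl(\beta_f(\sigma)-1\bigr)-1\;,
\]
which is precisely the bracketed coefficient on the right-hand side of \eqref{eq:2.Jz} and~\eqref{eq:2.Jzz}.

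With these identifications, \eqref{eq:2.Jz} is just the $k=2$, $w_1=w_2=0$ instance of Theorem~\ref{th:1.mpJulia}, and \eqref{eq:2.Jzz} is its specialization to $z=0$. The equality statement is inherited verbatim: by Theorem~\ref{th:1.mpJulia}, equality at one point is equivalent to equality everywhere and to $f\in\mathcal{B}_3$. I do not foresee any real obstacle beyond the bookkeeping involved in the two iterations of the definition of $f^*$ and of Proposition~\ref{th:1.beta}; the only mildly computational step is the Taylor expansion giving $(\Delta_0 f)'(0)=f''(0)/2$.
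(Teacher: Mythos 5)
Your proposal is correct and follows essentially the same route as the paper: apply Theorem~\ref{th:1.mpJulia} with $k=2$, $w_1=w_2=0$, and make $\Delta_{0,0}f(z)$, $\Delta_{0,0}f(0)=(\Delta_0f)^h(0)$, $\Delta_{0,0}f(\sigma)$ and $\beta_{\Delta_{0,0}f}(\sigma)$ explicit via \eqref{eq:1.recall}, \eqref{eq:2.dzh} and Proposition~\ref{th:1.beta}. All the identifications you compute match the paper's, and the equality case is inherited from Theorem~\ref{th:1.mpJulia} exactly as in the paper.
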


\begin{proof}
We would like to apply Theorem~\ref{th:1.mpJulia} with $k=2$ and $w_1=w_2=0$. 

First of all
\[
\beta_{\Delta_{0,0}f}(\sigma)=\beta_{\Delta_0f}(\sigma)\frac{1-|\Delta_0f(0)|^2}{|\Delta_0f(\sigma)-\Delta_0f(0)|^2}-1
=\frac{1-|f'(0)|^2}{\left|\frac{f(\sigma)}{\sigma}-f'(0)\right|^2}\bigl(\beta_f(\sigma)-1\bigr)-1\;,
\]
where we used $f(0)=0$ and $\Delta_0f(0)=f^h(0)=f'(0)$. Next recalling \eqref{eq:1.recall} we get
\[
\Delta_{0,0}f(z)=\begin{cases}
\frac{\frac{f(z)}{z}-f'(0)}{z}\frac{1}{1-\overline{f'(0)}\frac{f(z)}{z}}&\mathrm{if}\ z\ne 0\;,\\
\frac{f''(0)}{2(1-|f'(0)|^2)}&\mathrm{if}\ z=0
\end{cases}
\]
where we used \eqref{eq:2.dzh}. The assertion then follows from Theorem~\ref{th:1.mpJulia}.
\end{proof}

Notice that \eqref{eq:1.Jzzz} is equivalent to saying that the right-hand side of \eqref{eq:2.Jzz} is non-negative; so \eqref{eq:2.Jzz} is an improvement of  \eqref{eq:1.Jzzz}.

If $f(0)=f'(0)=0$ \eqref{eq:2.Jz} and \eqref{eq:2.Jzz} simplify becoming
\[
\frac{\left|\frac{f(\sigma)}{\sigma^2}-\frac{f(z)}{z^2}\right|^2}{1-\left|\frac{f(z)}{z^2}\right|^2}\le\bigl(\beta_f(\sigma)-2\bigr)\frac{|\sigma-z|^2}{1-|z|^2}
\]
for $z\ne 0$ and
\[
\frac{\left|\frac{f(\sigma)}{\sigma^2}-\frac{1}{2}f''(0)\right|^2}{1-\left|\frac{1}{2}f''(0)\right|^2}\le\beta_f(\sigma)-2\;.
\]
These formulas suggest the following

\begin{Proposition}
\label{th:2.CPn}
Let $f\in\Hol(\D,\D)$ and $\sigma\in\de\D$ be such that $\beta_f(\sigma)<+\infty$. Given $k\ge 1$, assume that $f$ is not a Blaschke product of degree at most~$k\ge1$,
and that $f(0)=\cdots=f^{(k-1)}(0)=0$. Then 
\begin{equation}
\frac{\left|\frac{f(\sigma)}{\sigma^k}-\frac{f(z)}{z^k}\right|^2}{1-\left|\frac{f(z)}{z^k}\right|^2}\le\bigl(\beta_f(\sigma)-k\bigr)\frac{|\sigma-z|^2}{1-|z|^2}
\label{eq:2.Jzk}
\end{equation}
for $z\ne 0$ and
\begin{equation}
\frac{\left|\frac{f(\sigma)}{\sigma^k}-\frac{1}{k!}f^{(k)}(0)\right|^2}{1-\left|\frac{1}{k!}f^{(k)}(0)\right|^2}\le\beta_f(\sigma)-k\;.
\label{eq:2.Jzzk}
\end{equation}
Moreover, equality occurs in \eqref{eq:2.Jzk} at some $z_0\in\D\setminus\{0\}$ or in \eqref{eq:2.Jzzk} if and only if it always occurs if and only $f$ is a Blaschke product of degree~$k+1$.
\end{Proposition}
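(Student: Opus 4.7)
The plan is to apply Theorem~\ref{th:1.mpJulia} to $f$ with the choice $w_1=\cdots=w_k=0$, so that everything reduces to computing three ingredients explicitly: the iterated hyperbolic difference quotient $\Delta_{\mathbf{O}_k}f(z)$, its non-tangential boundary value $\Delta_{\mathbf{O}_k}f(\sigma)$, and the boundary dilation coefficient $\beta_{\Delta_{\mathbf{O}_k}f}(\sigma)$. Under the vanishing hypothesis $f(0)=f'(0)=\cdots=f^{(k-1)}(0)=0$, all three admit very simple closed forms, and substituting them into \eqref{eq:1.mpJulia} will produce exactly \eqref{eq:2.Jzk} (for $z\ne 0$) and \eqref{eq:2.Jzzk} (for $z=0$).

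The key step is an induction on $j=1,\ldots,k$ establishing the formulas
\[
\Delta_{\mathbf{O}_j}f(z)=\frac{f(z)}{z^j}\quad(z\ne 0),\qquad \Delta_{\mathbf{O}_j}f(0)=\begin{cases}0 & \text{if }j<k,\\ f^{(k)}(0)/k! & \text{if }j=k,\end{cases}
\]
together with $\Delta_{\mathbf{O}_j}f(\sigma)=f(\sigma)/\sigma^j$ and $\beta_{\Delta_{\mathbf{O}_j}f}(\sigma)=\beta_f(\sigma)-j$. For the base $j=1$, the identity $\Delta_0f(z)=f(z)/z$ is immediate from the definition of $f^*(z,0)$ because $f(0)=0$; Lemma~\ref{th:1.f*bound} gives $\Delta_0f(\sigma)=f(\sigma)/\sigma$; and Proposition~\ref{th:1.beta} with $w=0$ and $f(0)=0$ yields $\beta_{\Delta_0f}(\sigma)=\beta_f(\sigma)-1$. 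For the inductive step at level $j<k$, set $g=\Delta_{\mathbf{O}_j}f$: since $g(z)=f(z)/z^j$ and the Taylor expansion $f(z)=O(z^{j+1})$ (because $f$ vanishes to order $k>j$ at $0$), we get $g(0)=0$, so $\Delta_0g(z)=g(z)/z=f(z)/z^{j+1}$ by the same direct computation. The formulas for $\Delta_{\mathbf{O}_{j+1}}f(\sigma)$ and $\beta_{\Delta_{\mathbf{O}_{j+1}}f}(\sigma)$ are then obtained by applying Lemma~\ref{th:1.f*bound} and Proposition~\ref{th:1.beta} to $g$ with $w=0$ (using that $g(0)=0$, $|g(\sigma)|=1$, and $|\sigma|=1$, which simplify the right-hand sides to $g(\sigma)/\sigma$ and $\beta_g(\sigma)-1$ respectively). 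At level $j=k$ one needs the final value $\Delta_{\mathbf{O}_k}f(0)=\lim_{z\to 0}f(z)/z^k=f^{(k)}(0)/k!$, which is precisely the quantity appearing in \eqref{eq:2.Jzzk}.

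Once these identifications are in hand, Theorem~\ref{th:1.mpJulia} applied to $f$ with $w_1=\cdots=w_k=0$ reads
\[
\frac{|f(\sigma)/\sigma^k-\Delta_{\mathbf{O}_k}f(z)|^2}{1-|\Delta_{\mathbf{O}_k}f(z)|^2}\le\bigl(\beta_f(\sigma)-k\bigr)\frac{|\sigma-z|^2}{1-|z|^2},
\]
and substituting the explicit form of $\Delta_{\mathbf{O}_k}f(z)$ recovers \eqref{eq:2.Jzk} when $z\ne 0$ and \eqref{eq:2.Jzzk} when $z=0$. The equality discussion is inherited verbatim from the corresponding statement in Theorem~\ref{th:1.mpJulia}: equality at a single point is equivalent to equality everywhere, which is equivalent to $f$ being a Blaschke product of degree $k+1$.

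The main technical point to watch is the bookkeeping in the induction: one must use the hypothesis $f^{(i)}(0)=0$ for $i<k$ at each stage to conclude $\Delta_{\mathbf{O}_j}f(0)=0$ for $j<k$, since this vanishing is what allows the recursive step $\Delta_0g(z)=g(z)/z$ and the collapsing of Proposition~\ref{th:1.beta} to $\beta_{\Delta_{\mathbf{O}_{j+1}}f}(\sigma)=\beta_{\Delta_{\mathbf{O}_j}f}(\sigma)-1$. Everything else is routine algebra on the unit circle.
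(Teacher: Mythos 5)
Your proposal is correct and follows essentially the same route as the paper: the paper's proof likewise establishes $\Delta_{\mathbf{O}_k}f(z)=f(z)/z^k$ (with value $f^{(k)}(0)/k!$ at the origin) and $\beta_{\Delta_{\mathbf{O}_k}f}(\sigma)=\beta_f(\sigma)-k$ by induction, and then invokes Theorem~\ref{th:1.mpJulia} with $w_1=\cdots=w_k=0$. Your write-up simply makes explicit the inductive bookkeeping (via Lemma~\ref{th:1.f*bound} and Proposition~\ref{th:1.beta}) that the paper leaves to the reader.
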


\begin{proof}
%
By induction it is easy to prove that
\begin{equation}
\Delta_{\mathbf{O}_k}f(z)=
\begin{cases}
\frac{f(z)}{z^k}&\mathrm{if}\ z\ne0\;,\\
\frac{1}{k!}f^{(k)}(0)&\mathrm{if}\ z=0\;,
\end{cases}
\label{eq:2.Ok}
\end{equation}
and that $\beta_{\Delta_{\mathbf{O}_k}f}(\sigma)=\beta_f(\sigma)-k$. The assertion then follows from Theorem~\ref{th:1.mpJulia}.
\end{proof}

In particular, if $f(\sigma)=\sigma^k$ the left-hand sides of \eqref{eq:2.Jzk} and \eqref{eq:2.Jzzk} become independent of~$\sigma$. This suggests that we might obtain 
a generalization of Proposition~\ref{th:1.CPtrue} with multiple fixed points. It turns out that this is easy when the multiple fixed point is the origin (see Corollary~\ref{th:2.CPgen} below), but the statement 
and the proof of the general result when the multiple fixed point is not the origin are considerably harder:

\begin{Theorem}
\label{th:2.CPgenz}
Let $f\in\Hol(\D,\D)$. Given $k\ge 1$, assume that $f$ is not a Blaschke product of degree at most~$k$ and that there exists $z_0\in\D$ 
such that $f(z_0)=z_0$ and $f'(z_0)=\ldots=f^{(k-1)}(z_0)=0$. 
Take $\sigma_1,\ldots,\sigma_n\in\de\D$ distinct points such that $\beta_f(\sigma_j)<+\infty$ and
\begin{equation}
f(\sigma_j)=\frac{\left(\frac{\sigma_j-z_0}{1-\overline{z_0}\sigma_j}\right)^k+z_0}{1+\overline{z_0}\left(\frac{\sigma_j-z_0}{1-\overline{z_0}\sigma_j}\right)^k}\;,
\label{eq:2.Dksigma}
\end{equation}
for $j=1,\ldots,n$. 
Then
\begin{equation}
\sum_{j=1}^{n}\frac{1}{\left(1+2\Re\frac{\left(f(\sigma_j)-\sigma_j\right)\overline{z_0}}{|f(\sigma_j)-z_0|^2}\right)\beta_f(\sigma_j)-k}\le\frac{1-\left|\frac{f^{(k)}(z_0)}{k!}(1-|z_0|^2)^{k-1}\right|^2}{\left|1-\frac{f^{(k)}(z_0)}{k!}(1-|z_0|^2)^{k-1}\right|^2}\;,
\label{eq:2.CPzz}
\end{equation}
with equality if and only if $f$ is a Blaschke product of degree~$n+k$.
\end{Theorem}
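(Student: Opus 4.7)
The plan is first to reduce the theorem to the case $z_0=0$ (which is the content of Corollary~\ref{th:0.CPgen}), and then to prove the reduced statement by induction on~$n$, in the spirit of the proof of Proposition~\ref{th:1.CPtrue} but with the role of $\Delta_0 f$ played by the higher iterated hyperbolic difference quotient $\Delta_{\mathbf{O}_k}f$ made available by Proposition~\ref{th:2.CPn}.

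For the reduction, set $\psi(z)=(z-z_0)/(1-\overline{z_0}z)\in\Aut(\D)$ and $\tilde f=\psi\circ f\circ\psi^{-1}$. A direct Taylor expansion using $(\psi^{-1})'(0)=1-|z_0|^2$ yields $\tilde f(0)=0$, $\tilde f^{(j)}(0)=0$ for $j=1,\ldots,k-1$, and
\[
\tilde f^{(k)}(0)/k!=\bigl(f^{(k)}(z_0)/k!\bigr)(1-|z_0|^2)^{k-1}.
\]
Setting $\tilde\sigma_j=\psi(\sigma_j)$, a short computation shows that \eqref{eq:2.Dksigma} is equivalent to $\tilde f(\tilde\sigma_j)=\tilde\sigma_j^k$, while the chain rule for boundary dilation coefficients (using $|1-\overline{z_0}f(\sigma_j)|=|f(\sigma_j)-z_0|$, since $|f(\sigma_j)|=1$) gives
\[
\beta_{\tilde f}(\tilde\sigma_j)=\frac{|1-\overline{z_0}\sigma_j|^2}{|f(\sigma_j)-z_0|^2}\,\beta_f(\sigma_j).
\]
The elementary identity $|1-\overline{z_0}\sigma_j|^2-|f(\sigma_j)-z_0|^2=2\Re\bigl(\overline{z_0}(f(\sigma_j)-\sigma_j)\bigr)$ converts this prefactor into $1+2\Re\frac{(f(\sigma_j)-\sigma_j)\overline{z_0}}{|f(\sigma_j)-z_0|^2}$, so applying Corollary~\ref{th:0.CPgen} to $\tilde f$ delivers \eqref{eq:2.CPzz}.

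To prove Corollary~\ref{th:0.CPgen}, put $g=\Delta_{\mathbf{O}_k}f\in\Hol(\D,\D)$ (well-defined by Proposition~\ref{th:1.Blhdq}). Formula~\eqref{eq:2.Ok} gives $g(0)=f^{(k)}(0)/k!$ and $g(\sigma_j)=f(\sigma_j)/\sigma_j^k=1$, while iterating Proposition~\ref{th:1.beta} at $w=0$ yields $\beta_g(\sigma_j)=\beta_f(\sigma_j)-k$. The strategy is then to prove by induction on~$n$ the stronger functional inequality
\[
\Re\frac{1+g(z)}{1-g(z)}\ge\sum_{j=1}^n\frac{1}{\beta_g(\sigma_j)}\Re\frac{\sigma_j+z}{\sigma_j-z}\qquad(z\in\D),
\]
with equality at one point if and only if everywhere if and only if $g\in\mathcal{B}_n$, equivalently (by Proposition~\ref{th:1.Blhdq}) $f\in\mathcal{B}_{n+k}$; evaluating at $z=0$ then gives the bound in Corollary~\ref{th:0.CPgen}. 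The base case $n=1$ is the classical Julia lemma applied to $g$ at $\sigma_1$, rewritten in real-part form using $g(\sigma_1)=1$. For the inductive step we introduce $h\in\Hol(\D,\C)$ via
\[
\frac{1+h(z)}{1-h(z)}=\frac{1+g(z)}{1-g(z)}-\sum_{j=1}^{n-1}\frac{1}{\beta_g(\sigma_j)}\frac{\sigma_j+z}{\sigma_j-z};
\]
the inductive hypothesis forces $h(\D)\subseteq\overline{\D}$, and $h$ cannot be constant of modulus~$1$, for otherwise Lemma~\ref{th:1.Bz} would place $g$ in $\mathcal{B}_{n-1}$, which has at most $n-1$ distinct preimages of any boundary value, contradicting $g(\sigma_j)=1$ at $n$ distinct $\sigma_j$. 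A non-tangential expansion at $\sigma_n$, based on the Julia-Wolff-Carath\'eodory relation $g'(\sigma_n)=\overline{\sigma_n}\beta_g(\sigma_n)$, shows that $h(\sigma_n)=1$ and $\beta_h(\sigma_n)=\beta_g(\sigma_n)$; applying the base case to~$h$ and substituting back into the defining relation for $h$ closes the induction. Equality at any point forces $h\in\Aut(\D)$, hence by Lemma~\ref{th:1.Bz} $g\in\mathcal{B}_n$ and therefore $f\in\mathcal{B}_{n+k}$.

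The main obstacle is the inductive step: one must verify both that $h$ is non-constant (where the distinctness of the $\sigma_j$ enters essentially, via the boundary-covering count for Blaschke products) and that the non-tangential expansion of $(1+g(z))/(1-g(z))$ at $\sigma_n$ produces precisely the constant $\beta_h(\sigma_n)=\beta_g(\sigma_n)$ required so that the base case supplies the correct summand. The algebraic manipulations in the reduction step are routine but must be carried out carefully in order to match the precise prefactor appearing in \eqref{eq:2.CPzz}.
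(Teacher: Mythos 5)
Your overall strategy is sound, but it differs from the paper's in one structural respect, and it has one genuine gap.

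On the structure: the paper does \emph{not} reduce to $z_0=0$. It runs the whole induction on~$n$ directly at~$z_0$, using $g=\Delta_{(\mathbf{z_0})_k}f$, which forces three preliminary inductive computations (the Taylor expansion \eqref{eq:2.CPzuno} of $\Delta_{(\mathbf{z_0})_h}f$ at~$z_0$, the dilation coefficients \eqref{eq:2.CPzdue}, and the boundary values \eqref{eq:2.Dhs}, the last of which shows that \eqref{eq:2.Dksigma} is exactly the condition $\Delta_{(\mathbf{z_0})_k}f(\sigma_j)=1$); the final inequality is obtained by evaluating the functional inequality at $z=z_0$. Your conjugation $\tilde f=\psi\circ f\circ\psi^{-1}$ replaces all of this bookkeeping by three one-line computations, and I have checked them: the leading Taylor coefficient transforms by $(1-|z_0|^2)^{k-1}$ as you claim, condition \eqref{eq:2.Dksigma} is precisely $\psi(f(\sigma_j))=\psi(\sigma_j)^k$, and the chain rule for angular derivatives together with $|1-\overline{z_0}\sigma_j|^2-|f(\sigma_j)-z_0|^2=2\Re\bigl(\overline{z_0}(f(\sigma_j)-\sigma_j)\bigr)$ produces exactly the prefactor in \eqref{eq:2.CPzz}. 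Note that Remark~\ref{rem:2.fine} of the paper asserts that this reduction is not easily available; your computation shows that once the statement of the theorem is known, the conversion factor for $\beta_f(\sigma_j)$ is in fact explicit, so your route is legitimate (and there is no circularity, since you prove the $z_0=0$ case independently). The inductive core of your proof of that case --- passing to $h$ via the Cayley transform, excluding $h\equiv e^{i\theta}$ by the degree count in Lemma~\ref{th:1.Bz}, and verifying $h(\sigma_n)=1$, $\beta_h(\sigma_n)=\beta_g(\sigma_n)$ from the Julia--Wolff--Carath\'eodory expansion --- is essentially identical to the paper's.

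The gap is in the equality case. You prove only one direction: equality at some point forces $h\in\Aut(\D)$, hence $g\in\mathcal{B}_n$ and $f\in\mathcal{B}_{n+k}$. The converse --- that $f\in\mathcal{B}_{n+k}$ actually \emph{produces} equality in \eqref{eq:2.CPzz} --- is asserted in your chain of ``if and only if''s but never argued, and it does not follow from the induction. The paper closes this by a separate argument: for $g\in\mathcal{B}_n$ with $g(\sigma_j)=1$ at the $n$ distinct solutions, the rational function $F(z)=\frac{1+g(z)}{1-g(z)}-\sum_{j=1}^n a_j\frac{\sigma_j+z}{\sigma_j-z}$ has purely imaginary boundary values (the simple poles at the $\sigma_j$ cancel because the residue of $\frac{1+g}{1-g}$ at $\sigma_j$ is $-2\sigma_j/\beta_g(\sigma_j)=-2a_j\sigma_j$, by \eqref{eq:I.2.JCdue}), so $\Re F\equiv 0$ and in particular $\Re F$ vanishes at the evaluation point, which is the equality in \eqref{eq:2.CPzz}. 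You need to add this step to obtain the full ``if and only if'' of the statement.
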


\begin{proof}
The fact that $z_0$ is multiple fixed point of~$f$ of order~$k$ is equivalent to saying that we can write
\[
f(z)=z_0+\frac{1}{k!}f^{(k)}(z_0)(z-z_0)^k+O\bigl((z-z_0)^{k+1}\bigr)\;.
\]
We claim that then
\begin{equation}
\Delta_{(\mathbf{z_0})_h}f(z)=\frac{1}{k!}f^{(k)}(z_0)(1-|z_0|^2)^{h-1}(z-z_0)^{k-h}+O\bigl((z-z_0)^{k-h+1}\bigr)
\label{eq:2.CPzuno}
\end{equation}
for all $h=1,\ldots,k$, where $(\mathbf{z_0})_h=(z_0,\ldots,z_0)\in\D^h$. We proceed by induction. For $h=1$ we have
\[
\begin{aligned}
\Delta_{z_0}f(z)&=\frac{f(z)-z_0}{z-z_0}\frac{1-\overline{z_0}z}{1-\overline{z_0}f(z)}=\frac{1}{k!}f^{(k)}(z_0)(z-z_0)^{k-1}\bigl[1+O(z-z_0)\bigr]\\
&=\frac{1}{k!}f^{(k)}(z_0)(z-z_0)^{k-1}+O\bigl((z-z_0)^k\bigr)\;,
\end{aligned}
\]
as claimed. Assume that \eqref{eq:2.CPzuno} holds for $1\le h-1\le k-1$. Then $\Delta_{(\mathbf{z_0})_{h-1}}f(z_0)=0$ yields
\[
\begin{aligned}
\Delta_{(\mathbf{z_0})_h}f(z)&=\frac{\Delta_{(\mathbf{z_0})_{h-1}}f(z)}{z-z_0}(1-\overline{z_0}z)\\
&=\frac{1}{k!}f^{(k)}(z_0)(1-|z_0|^2)^{h-2}(z-z_0)^{k-h}\bigl[1-|z_0|^2
+O(z-z_0)\bigr]\\
&=\frac{1}{k!}f^{(k)}(z_0)(1-|z_0|^2)^{h-1}(z-z_0)^{k-h}+O\bigl((z-z_0)^{k-h+1}\bigr)\;,
\end{aligned}
\]
as wanted. 

Now we claim that
\begin{equation}
\beta_{\Delta_{(\mathbf{z_0})_h}f}(\sigma_j)=\frac{1-|z_0|^2}{|\sigma_j-z_0|^2}\left[\left(1+2\Re\frac{\bigl(f(\sigma_j)-\sigma_j\bigr)\overline{z_0}}{|f(\sigma_j)-z_0|^2}
\right)\beta_f(\sigma_j)-h\right]
\label{eq:2.CPzdue}
\end{equation}
for all $h=1,\ldots,k$ and $j=1,\ldots,n$. We again proceed by induction on $h$. For $h=1$ Proposition~\ref{th:1.beta} yields
\[
\begin{aligned}
\beta_{\Delta_{z_0}f}(\sigma_j)&=\frac{1-|z_0|^2}{|f(\sigma_j)-z_0|^2}\beta_f(\sigma_j)-\frac{1-|z_0|^2}{|\sigma_j-z_0|^2}\\
&=\frac{1-|z_0|^2}{|\sigma_j-z_0|^2}\bigl(\beta_f(\sigma)-1\bigr)+\left(\frac{1-|z_0|^2}{|f(\sigma_j)-z_0|^2}-\frac{1-|z_0|^2}{|\sigma_j-z_0|^2}\right)\beta_f(\sigma)\\
&=\frac{1-|z_0|^2}{|\sigma_j-z_0|^2}\left(\beta_f(\sigma)-1 +\frac{|\sigma_j-z_0|^2-|f(\sigma_j)-z_0|^2}{|f(\sigma_j)-z_0|^2}\beta_f(\sigma_j)\right)\\
&=\frac{1-|z_0|^2}{|\sigma_j-z_0|^2}\left[\left(1+2\Re\frac{\bigl(f(\sigma_j)-\sigma_j\bigr)\overline{z_0}}{|f(\sigma_j)-z_0|^2}
\right)\beta_f(\sigma_j)-1\right]\;,
\end{aligned}
\]
as claimed. Assume that \eqref{eq:2.CPzdue} holds for $1\le h-1\le k-1$. Using again the fact that $\Delta_{(\mathbf{z_0})_{h-1}}f(z_0)=0$ we get
\[
\begin{aligned}
\beta_{\Delta_{(\mathbf{z_0})_h}f}(\sigma_j)&=\beta_{\Delta_{(\mathbf{z_0})_{h-1}}f}(\sigma_j)-\frac{1-|z_0|^2}{|\sigma_j-z_0|^2}\\
&=\frac{1-|z_0|^2}{|\sigma_j-z_0|^2}\left[\left(1+2\Re\frac{\bigl(f(\sigma_j)-\sigma_j\bigr)\overline{z_0}}{|f(\sigma_j)-z_0|^2}
\right)\beta_f(\sigma_j)-h\right]\;,
\end{aligned}
\]
and we are done.

We need one more preliminary computation. We claim that 
\begin{equation}
\Delta_{(\mathbf{z_0})_h}f(\sigma_j)=\overline{f(\sigma_j)}\sigma_j^h\left(\frac{\overline{\sigma_j}-\overline{z_0}}{\sigma_j-z_0}\right)^h\frac{f(\sigma_j)-z_0}{\overline{f(\sigma_j)}-\overline{z_0}}
\label{eq:2.Dhs}
\end{equation}
for $j=1,\ldots,n$ and $h=1,\ldots,k$. As always, we argue by induction on~$h$. For $h=1$ we have
\[
\Delta_{z_0}f(\sigma_j)=\frac{f(\sigma_j)-z_0}{1-\overline{z_0}f(\sigma_j)}\frac{1-\overline{z_0}\sigma_j}{\sigma_j-z_0}=
\overline{f(\sigma_j)}\sigma_j\frac{\overline{\sigma_j}-\overline{z_0}}{\sigma_j-z_0}\frac{f(\sigma_j)-z_0}{\overline{f(\sigma_j)}-\overline{z_0}}
\]
as claimed. Assume that \eqref{eq:2.Dhs} holds for $1\le h-1\le k-1$. Recalling that $\Delta_{(\mathbf{z_0})_{h-1}}f(z_0)=0$ we obtain
\[
\Delta_{(\mathbf{z_0})_h}f(\sigma_j)=\Delta_{(\mathbf{z_0})_{h-1}}f(\sigma_j)\frac{1-\overline{z_0}\sigma_j}{\sigma_j-z_0}=\overline{f(\sigma_j)}\sigma_j^h\left(\frac{\overline{\sigma_j}-\overline{z_0}}{\sigma_j-z_0}\right)^h\frac{f(\sigma_j)-z_0}{\overline{f(\sigma_j)}-\overline{z_0}}\;,
\]
and \eqref{eq:2.Dhs} is proved. In particular, we have $\Delta_{(\mathbf{z_0})_k}f(\sigma_j)=1$ if and only if
\[
\frac{1-\overline{f(\sigma_j)}z_0}{\overline{f(\sigma_j)}-\overline{z_0}}=\left(\frac{\sigma_j-z_0}{1-\overline{z_0}\sigma_j}\right)^k
\]
if and only if
\[
f(\sigma_j)=\frac{\left(\frac{\sigma_j-z_0}{1-\overline{z_0}\sigma_j}\right)^k+z_0}{1+\overline{z_0}\left(\frac{\sigma_j-z_0}{1-\overline{z_0}\sigma_j}\right)^k}\;.
\]
In other words, condition \eqref{eq:2.Dksigma} is just another way of writing $\Delta_{(\mathbf{z_0})_k}f(\sigma_j)=1$.

We can now apply Theorem~\ref{th:1.mpJulia}. Recalling the assumption $\Delta_{(\mathbf{z_0})_k}f(\sigma_j)=1$ we get
\begin{equation}
\frac{|1-\Delta_{(\mathbf{z_0})_k}f(z)|^2}{1-|\Delta_{(\mathbf{z_0})_k}f(z)|^2}\le \frac{1-|z_0|^2}{|\sigma_j-z_0|^2}\left[\left(1+2\Re\frac{\bigl(f(\sigma_j)-\sigma_j\bigr)\overline{z_0}}{|f(\sigma_j)-z_0|^2}\right)\beta_f(\sigma_j)-k\right]\frac{|\sigma_j-z|^2}{1-|z|^2}
\label{eq:2.CPztre}
\end{equation}
for all $z\in\D$ and $j=1,\ldots, n$, where 
\[
\Delta_{(\mathbf{z_0})_k}f(z)=\frac{1}{k!}f^{(k)}(z_0)(1-|z_0|^2)^{k-1}+O(z-z_0)
\]
by \eqref{eq:2.CPzuno}. Furthermore, equality in \eqref{eq:2.CPztre} holds in one point (and hence everywhere) if and only if $f$ is a Blaschke product of degree~$k+1$.

We now claim that
\begin{equation}
\begin{aligned}
\Re\biggl(&\frac{1+\Delta_{(\mathbf{z_0})_k}f(z)}{1-\Delta_{(\mathbf{z_0})_k}f(z)}\biggr)\\
&\ge\sum_{j=1}^n\frac{|\sigma_j-z_0|^2}{1-|z_0|^2}
\frac{1}{\left(1+2\Re\frac{\left(f(\sigma_j)-\sigma_j\right)\overline{z_0}}{|f(\sigma_j)-z_0|^2}\right)\beta_f(\sigma_j)-k}\Re\left(\frac{\sigma_j+z}{\sigma_j-z}\right)\;,
\end{aligned}
\label{eq:2.CPzqua}
\end{equation}
with equality in one point (and hence everywhere) if and only if $f$ is a Blaschke product of degree~$n+k$. 

We argue by induction on~$n$. For $n=1$ \eqref{eq:2.CPzqua} is exactly equivalent to~\eqref{eq:2.CPztre}.
Assume that \eqref{eq:2.CPzqua} holds for $n-1$. In particular we have
\begin{equation}
\Re\left[\frac{1+g(z)}{1-g(z)}-\sum_{j=1}^{n-1} a_j\frac{\sigma_j+z}{\sigma_j-z}\right]\ge 0\;,
\label{eq:2.CPzcin}
\end{equation}
with equality in one point (and hence everywhere) if and only if $f\in\mathcal{B}_{n+k-1}$, where $g=\Delta_{(\mathbf{z_0})_k}f$ and
\[
a_j=\frac{|\sigma_j-z_0|^2}{1-|z_0|^2}
\frac{1}{\left(1+2\Re\frac{\left(f(\sigma_j)-\sigma_j\right)\overline{z_0}}{|f(\sigma_j)-z_0|^2}\right)\beta_f(\sigma_j)-k}=\frac{1}{\beta_{\Delta_{(\mathbf{z_0})_k}f}(\sigma_j)}>0\;.
\]
Therefore we can find $h\in\Hol(\D,\C)$ with $h(\D)\subset\overline{\D}$ so that
\[
\frac{1+g(z)}{1-g(z)}-\sum_{j=1}^{n-1} a_j\frac{\sigma_j+z}{\sigma_j-z}=\frac{1+h(z)}{1-h(z)}\;.
\]
Notice that either $h(\D)\subseteq\D$ or $h\equiv e^{i\theta}\in\de\D$, and the latter case occurs if and only if we have equality in~\eqref{eq:2.CPzcin}.

If $h\equiv e^{i\theta}$ Lemma~\ref{th:1.Bz} implies that $g\in\mathcal{B}_{n-1}$. So $g$ is a rational function of degree~$n-1$; but we are assuming that the equation $g(z)=1$ has at least $n$ distinct solutions, contradiction.

So $h\in\Hol(\D,\D)$. Since $g(\sigma_n)=1$, $\beta_g(\sigma_n)=\frac{1}{a_n}$ and $g'(\sigma_n)=\overline{\sigma_n}/a_n$, where the latter equality follows from \eqref{eq:I.2.JCdue}, a quick computation yields $h(\sigma_n)=1$ and $h'(\sigma_n)=\overline{\sigma_n}/a_n$. 

Put $\tilde h(z)=zh(z)$. Then we have $\tilde h(0)=0$, $\tilde h(\sigma_n)=\sigma_n$ and $\beta_{\tilde h}(\sigma_n)=\frac{1}{a_n}+1$. So we can apply Corollary~\ref{th:1.CP} to~$\tilde h$ obtaining
\[
\Re\left(\frac{1+h(z)}{1-h(z)}\right)\ge a_n\Re\left(\frac{\sigma_n+z}{\sigma_n-z}\right)\;,
\]
with equality in one point (and hence everywhere) if and only if $h\in\Aut(\D)$. Recalling \eqref{eq:2.CPzcin} and Lemma~\ref{th:1.Bz} we see that we have proven
\eqref{eq:2.CPzqua}, with equality in one point (and hence everywhere) implying that $g$ is a Blaschke product of degree~$n$, and thus that $f$ is a
Blaschke product of degree~$n+k$, by Proposition~\ref{th:1.Blhdq}. 

In particular, \eqref{eq:2.CPzz} follows taking $z=z_0$ in~\eqref{eq:2.CPzqua}, and equality there implies that~$f\in\mathcal{B}_{n+k}$. 

To prove the converse, assume that $f\in\mathcal{B}_{n+k}$, so that $g=\Delta_{(\mathbf{z_0})_k}f\in\mathcal{B}_n$, and $\sigma_1,\ldots,\sigma_n\in\de\D$ are the $n$ distinct solutions of $g(z)=1$. Let $F\colon\C\to\widehat{\C}$ be defined by
\[
F(z)=\frac{1+g\circ\phi_{z_0}(z)}{1-g\circ\phi_{z_0}(z)}-\sum_{j=1}^n a_j\frac{\sigma_j+\phi_{z_0}(z)}{\sigma_j-\phi_{z_0}(z)}\;,
\] 
where $\phi_{z_0}(z)=(z_0-z)/(1-\overline{z_0}z)$; notice that $g\circ\phi_{z_0}$ is still a Blaschke product thanks to Lemma~\ref{th:I.2.Blaschke}.
Then $\Re F|_{\de\D}\equiv 0$; this implies $\Re F(0)=0$, which gives exactly
\[
\frac{1-|g(z_0)|^2}{|1-g(z_0)|^2}=\sum_{j=1}^n a_j\frac{1-|z_0|^2}{|\sigma_j-z_0|^2}=
\sum_{j=1}^n \frac{1}{\left(1+2\Re\frac{\left(f(\sigma_j)-\sigma_j\right)\overline{z_0}}{|f(\sigma_j)-z_0|^2}\right)\beta_f(\sigma_j)-k} \;,
\]
and we are done.
\end{proof}

\begin{Corollary}
\label{th:2.CPgen}
Let $f\in\Hol(\D,\D)$. Given $k\ge 1$, assume that $f$ is not a Blaschke product of degree at most~$k$ and that $f(0)=\cdots=f^{(k-1)}(0)=0$. 
Take $\sigma_1,\ldots,\sigma_n\in\de\D$ distinct points such that $f(\sigma_j)=\sigma_j^{k}$ and $\beta_f(\sigma_j)<+\infty$
for $j=1,\ldots,n$. Then
\[
\sum_{j=1}^{n}\frac{1}{\beta_f(\sigma_j)-k}\le\frac{1-\left|\frac{f^{(k)}(0)}{k!}\right|^2}{\left|1-\frac{f^{(k)}(0)}{k!}\right|^2}\;,
\]
with equality if and only if $f$ is a Blaschke product of degree~$n+k$.
\end{Corollary}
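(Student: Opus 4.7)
The plan is to reduce the statement to an inequality for the holomorphic self-map $F:=\Delta_{\mathbf{O}_k}f$ and then to prove that inequality by induction on $n$, following the pattern of the proof of Proposition~\ref{th:1.CPtrue} but using Proposition~\ref{th:2.CPn} as the one-point input in place of Corollary~\ref{th:1.CP}. By the iterated form of Proposition~\ref{th:1.Blhdq}, since $f$ is not a Blaschke product of degree at most~$k$, the map $F$ lies in $\Hol(\D,\D)$; from \eqref{eq:2.Ok} one reads $F(0)=f^{(k)}(0)/k!$ and $F(\sigma_j)=f(\sigma_j)/\sigma_j^k=1$, and one has $\beta_F(\sigma_j)=\beta_f(\sigma_j)-k$. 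Hence the conclusion rewrites as $\sum_{j=1}^n 1/\beta_F(\sigma_j)\le\Re\frac{1+F(0)}{1-F(0)}$, and it will follow by setting $z=0$ in the claim
\begin{equation}
\Re\frac{1+F(z)}{1-F(z)}\ge\sum_{j=1}^n\frac{1}{\beta_F(\sigma_j)}\,\Re\frac{\sigma_j+z}{\sigma_j-z}\;,
\label{eq:plan.main}
\end{equation}
which I shall establish by induction on~$n$, with equality at one (equivalently every) point if and only if $F\in\mathcal{B}_n$, equivalently $f\in\mathcal{B}_{n+k}$ by Proposition~\ref{th:1.Blhdq}.

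The base case $n=1$ is precisely Proposition~\ref{th:2.CPn} (inequality \eqref{eq:2.Jzk} with $f(\sigma_1)=\sigma_1^k$) written out as a right-half-plane estimate. For the inductive step, assuming \eqref{eq:plan.main} for $n-1$ points, define $h\colon\D\to\C$ by
\[
\frac{1+h(z)}{1-h(z)}=\frac{1+F(z)}{1-F(z)}-\sum_{j=1}^{n-1}\frac{1}{\beta_F(\sigma_j)}\frac{\sigma_j+z}{\sigma_j-z}\;.
\]
The inductive hypothesis gives $h(\D)\subseteq\overline{\D}$. If $h\equiv e^{i\theta}$ were constant, Lemma~\ref{th:1.Bz} would force $F\in\mathcal{B}_{n-1}$, which is impossible since a rational function of degree $n-1$ cannot take the value~$1$ at $n$ distinct points; hence $h\in\Hol(\D,\D)$. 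An analysis based on the Julia-Wolff-Carathéodory Theorem~\ref{th:I.2.dieci} shows that, as $z\to\sigma_n$ non-tangentially, $\frac{1+F(z)}{1-F(z)}$ is asymptotic to $\frac{1}{\beta_F(\sigma_n)}\frac{\sigma_n+z}{\sigma_n-z}$ while the finite sum stays bounded; consequently $h(\sigma_n)=1$ and $\beta_h(\sigma_n)=\beta_F(\sigma_n)$.

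To close the induction, set $\tilde h(z):=zh(z)$, so that $\tilde h\in\Hol(\D,\D)$, $\tilde h(0)=0$, $\tilde h(\sigma_n)=\sigma_n$ and, using $h'(\sigma_n)=\overline{\sigma_n}\beta_h(\sigma_n)$ from Theorem~\ref{th:I.2.dieci}, $\beta_{\tilde h}(\sigma_n)=1+\beta_F(\sigma_n)$. Applying Corollary~\ref{th:1.CP} to $\tilde h$ and recalling $\tilde h(z)/z=h(z)$ yields
\[
\Re\frac{1+h(z)}{1-h(z)}\ge\frac{1}{\beta_F(\sigma_n)}\,\Re\frac{\sigma_n+z}{\sigma_n-z}\;,
\]
with equality if and only if $\tilde h\in\mathcal{B}_2$, i.e.\ $h\in\Aut(\D)$; by Lemma~\ref{th:1.Bz} this is equivalent to $F\in\mathcal{B}_n$. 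Adding this bound to the defining identity for~$h$ produces \eqref{eq:plan.main}. For the converse in the equality case, if $f\in\mathcal{B}_{n+k}$ then $F\in\mathcal{B}_n$, and the difference of the two sides of \eqref{eq:plan.main} is the real part of a rational function whose boundary values vanish on $\de\D$, hence vanishes identically in~$\D$; in particular equality holds at $z=0$. The hardest step will be the rigorous JWC analysis establishing $h(\sigma_n)=1$ and $\beta_h(\sigma_n)=\beta_F(\sigma_n)$; everything else is a transparent transcription of the Cowen-Pommerenke induction of Proposition~\ref{th:1.CPtrue}.
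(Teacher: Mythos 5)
Your argument is correct, but it takes a noticeably longer route than the paper. The paper disposes of this corollary in two lines, in two alternative ways: either by specializing the already-proven general Theorem~\ref{th:2.CPgenz} to $z_0=0$, or — more in the spirit of your reduction — by applying the already-established Proposition~\ref{th:1.CPtrue} directly to $g=\Delta_{\mathbf{O}_{k-1}}f$, observing via \eqref{eq:2.Ok} that $g(0)=0$, $g'(0)=f^{(k)}(0)/k!$, $g(\sigma_j)=\sigma_j$ and $\beta_g(\sigma_j)=\beta_f(\sigma_j)-(k-1)$, so that \eqref{eq:1.CPtrue} for $g$ \emph{is} the desired inequality. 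What you do instead is re-run the entire Cowen--Pommerenke induction for $F=\Delta_{\mathbf{O}_k}f$: your key inequality $\Re\frac{1+F(z)}{1-F(z)}\ge\sum_j\beta_F(\sigma_j)^{-1}\Re\frac{\sigma_j+z}{\sigma_j-z}$ is exactly the paper's \eqref{eq:1.CPu} for $g=\Delta_{\mathbf{O}_{k-1}}f$ in disguise, since $g(z)/z=F(z)$ and $\beta_g(\sigma_j)-1=\beta_F(\sigma_j)$; so the inductive apparatus (the auxiliary map $h$, the exclusion of constant $h$ via Lemma~\ref{th:1.Bz} and the degree count, the Julia--Wolff--Carath\'eodory computation of $h(\sigma_n)$ and $\beta_h(\sigma_n)$, the passage to $\tilde h(z)=zh(z)$ and Corollary~\ref{th:1.CP}, and the $\Re F|_{\de\D}\equiv0$ argument for the converse of the equality case) is a faithful transcription of the proof of Proposition~\ref{th:1.CPtrue} rather than something you needed to redo. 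All the individual steps check out — in particular the asymptotics $\frac{1+F(z)}{1-F(z)}\sim\beta_F(\sigma_n)^{-1}\frac{\sigma_n+z}{\sigma_n-z}$ and the identification $\beta_{\tilde h}(\sigma_n)=1+\beta_F(\sigma_n)$ are exactly what the paper itself uses (at the same level of terseness) inside Theorem~\ref{th:2.CPgenz} — so there is no gap; the only criticism is economy: a one-line citation of Proposition~\ref{th:1.CPtrue} applied to $\Delta_{\mathbf{O}_{k-1}}f$ would have replaced your whole induction.
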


\begin{proof}
It immediately follows from Theorem~\ref{th:2.CPgenz} applied with $z_0=0$. 

Alternatively, we can apply directly Proposition~\ref{th:1.CPtrue} to $g=\Delta_{\mathbf{O}_{k-1}}f$. Indeed, \eqref{eq:2.Ok} shows that $g(z)=f(z)/z^{k-1}$ for $z\ne 0$
and $g(0)=0$; in particular, $g'(0)=\frac{1}{k!}f^{(k)}(0)$. Moreover, $g(\sigma_j)=\sigma_j$ and $g'(\sigma_j)=\beta_f(\sigma_j)-(k-1)$ for all $j=1,\ldots,n$; hence the assertion follows immediately from \eqref{eq:1.CPtrue}.
\end{proof}

Notice that when $k=1$ the condition \eqref{eq:2.Dksigma} becomes
\[
f(\sigma_j)=\frac{\frac{\sigma_j-z_0}{1-\overline{z_0}\sigma_j}+z_0}{1+\overline{z_0}\frac{\sigma_j-z_0}{1-\overline{z_0}\sigma_j}}
=\sigma_j\;.
\]
So \eqref{eq:2.CPzz} reduces to \eqref{eq:1.CPtrue}, and thus Theorem~\ref{th:2.CPgenz} for $k=1$ recovers exactly Proposition~\ref{th:1.CPtrue}.

\begin{Remark}
\label{rem:2.fine}
We have seen that Proposition~\ref{th:1.CPtrue} for a generic fixed point~$z_0$ followed immediately from the case $z_0=0$, just replacing the map $f$ by the composition
$\phi_{z_0}\circ f\circ\phi_{z_0}$. Such an approach however does not allow to easily deduce Theorem~\ref{th:2.CPgenz} from Corollary~\ref{th:2.CPgen} because the boundary dilation coefficient depends in a complicated way on the higher order derivatives, and so we need the iterated hyperbolic difference quotients to keep everything under control.
\end{Remark}

\begin{bibdiv}
\begin{biblist}

\bib{Abate}{book}{
	author={M. Abate},
	title={Iteration theory of holomorphic maps on taut manifolds},
	publisher={Mediterranean Press},
	address={Rende},
	year={1989}
}

\bib{BaribeauRivardWegert}{article}{
	author={L. Baribeau and P. Rivard and E. Wegert},
	title={On hyperbolic divided differences and the Nevanlinna-Pick problem},
	journal={Comput. Methods Funct. Theory},
	volume={9},
	year={2009},
	pages={391--405}
}

\bib{Beardon}{article}{
	author={Beardon, A.F.},
	title={The Schwarz-Pick lemma for derivatives},
	journal={Proc. Amer. Math. Soc.},
	volume={125},
	year={1997}, 
	pages={3255--3256}
}

\bib{BeardonCarne}{article}{
	author={A. F. Beardon and T. K. Carne},
	title={A strengthening of the Schwarz-Pick inequality},
	journal={Amer. Math. Monthly},
	volume={99},
	year={1992},
	pages={216--217}
} 

\bib{BeardonMinda}{article}{
	author={A. F. Beardon and D. Minda},
	title={A multi-point Schwarz-Pick lemma},
	journal={J. Anal. Math.},
	volume={92},
	year={2004},
	pages={81--104}
}

\bib{Caratheodory}{article}{
	author={Carath\'eodory, C.},
	title={Untersuchungen \"uber die konformen Abbildungen von festen und ver\"anderlichen Gebieten},
	journal={Math. Ann.},
	volume={72},
	year={1912}, 
	pages={107--144}
}

\bib{Caratheodorya}{article}{,
	author={Carath\'eodory, C.},
	title={\"Uber die Winkelderivierten von beschr\"ankten analytischen Funktionen},
	journal={Sitzungsber. Preuss. Akad. Wiss. Berlin},
	year={1929},
	pages={39--54}
}

\bib{ChoKimSugawa}{article}{
	author={K. H. Cho and S.-A. Kim and T. Sugawa},
	title={On a multi-point Schwarz-Pick lemma},
	journal={Comput. Methods Funct. Theory},
	volume={12},
	year={2012},
	pages={483--499}
}

\bib{CowenPommerenke}{article}{
	author={C. C. Cowen and Ch. Pommerenke},
	title={Inequalities for the angular derivative of an analytic function in the unit disk},
	journal={J. Lond. Math. Soc.},
	volume={26},
	year={1982},
	pages={271--289}
}

\bib{Dieudonne}{article}{
	author={Dieudonn\'e, J.},
	title={Recherches sur quelques probl\`emes relatifs aux polyn\^omes et aux fonctions born\'ees d'une variable complexe},
	journal={Ann. Sci. \'Ec. Norm. Super.},
	volume={48},
	year={1931},
	pages={247--358}
}

\bib{Frovlova}{article}{
	author={A. Frovlova and M. Levenshtein and D. Shoikhet and A. Vasil'ev},
	title={Boundary distorsion estimates for holomorphic maps},
	journal={Complex Anal. Oper. Theory},
	volume={8},
	year={2014},
	pages={1129--1149}
}

\bib{Goluzin}{article}{
	author={Goluzin, G.M.},
	title={Some estimations of derivatives of bounded functions},
	journal={Mat. Sbornik},
	volume={58},
	year={1945},
	pages={295--306}
}

\bib{Julia}{article}{
	author={Julia, G.},
	title={Extension nouvelle d'un lemme de Schwarz},
	journal={Acta Math.},
	volume={42},
	year={1920},
	pages={349--355}
}

\bib{Kaptanoglu}{article}{
	author={Kaptano{\u g}lu, H.T.},
	title={Some refined Schwarz-Pick lemmas},
	journal={Michigan Math. J.},
	volume={50},
	year={2002},
	pages={649--664}
}
\bib{Komatu}{article}{
	author={Komatu, Y.},
	title={On angular derivative},
	journal={Kodai Math. Sem. Rep.},
	volume={13},
	year={1961},
	pages={167--179}
}
\bib{Mercer1997}{article}{
	author={Mercer, P.R.},
	title={Sharpened versions of the Schwarz lemma},
	journal={J. Math. Anal. Appl.},
	volume={205},
	year={1997},
	pages={508--511}
}
\bib{Mercer1999}{article}{
	author={Mercer, P.R.},
	title={On a strengthened Schwarz-Pick inequality},
	journal={J. Math. Anal. Appl.},
	volume={234},
	year={1999}, 
	pages={735--739}
}

\bib{Mercer2000}{article}{
	author={Mercer, P.R.},
	title={Another look at Julia's lemma},
	journal={Compl. Var.},
	volume={43},
	year={2000},
	pages={129--138}
}

\bib{Mercer2006}{article}{
	author={Mercer, P.R.},
	title={Schwarz-Pick-type estimates for the hyperbolic derivative},
	journal={J. Ineq. Appl.},
	year={2006},
	pages={1--6}
}


\bib{Mercer2018}{article}{
	author={Mercer, P.R.},
	title={An improved Schwarz lemma at the boundary},
	journal={Open Math.},
	volume={16},
	year={2018},
	pages={1140-1144}
}

\bib{Osserman}{article}{
	author={Osserman, R.},
	title={A sharp Schwarz inequality on the boundary}, 
	journal={Proc. Amer. Math. Soc.},
	volume={128},
	year={2000}, 
	pages={3513--3517}
}

\bib{Picka}{article}{
	author={Pick, G.},
	title={\"Uber eine Eigenschaft der konformen Abbildung kreisf\"ormiger Bereiche},
	journal={Math. Ann.}, 
	volume={77}, 
	year={1915},
	pages={1--6}
}

\bib{Pickb}{article}{
	author={Pick, G.},
	title={\"Uber die Beschr\"ankungen analytischer Funktionen, welche durch vorgegebene Funktionswerte bewirkt werden},
	journal={Math. Ann.},
	volume={77},
	year={1915},
	pages={7--23}
}


\bib{Rivard1}{article}{
	author={Rivard, P.},
	title={A Schwarz-Pick theorem for higher-order hyperbolic derivatives},
	journal={Proc. Amer. Math. Soc.},
	volume={139},
	year={2011}, 
	pages={209--217}
}
	
\bib{Rivard2}{article}{
	author={Rivard, P.},
	title={Some applications of higher-order hyperbolic derivatives},
	journal={Complex Anal. Oper. Theory},
	volume={7},
	year={2013},
	pages={1127--1156}
}

\bib{Rogosinski}{article}{
	author={Rogosinski, W.},
	title={Zum Schwarzschen Lemma},
	journal={Jahresber. Deutsch Math.-Ver.},
	volume={44},
	year={1934},
	pages={258--261}
}

\bib{Schwarz}{incollection}{
	author={Schwarz, H.A.},
	title={Zur Theorie der Abbildung},
	pages={108--132}, 
	booktitle={Gesammelte Mathematische Abhandlungen, II},
	publisher={Springer}, 
	address={Berlin}, 
	year={1890}
}

\bib{Unkelbach1}{article}{
	author={Unkelbach, H.},
	title={\"Uber die Randverzerrung bei konformer Abbildung},
	journal={Math. Z.},
	volume={43},
	year={1938},
	pages={739--742}
}

\bib{Unkelbach2}{article}{
	author={Unkelbach, H.},
	title={\"Uber die Randverzerrung bei schlichter konformer Abbildung},
	journal={Math. Z.},
	volume={46},
	year={1940},
	pages={329--336}
}

\bib{Wolff}{article}{
	author={Wolff, J.},
	title={Sur une g\'en\'eralisation d'un th\'eor\`eme de Schwarz},
	journal={C.R. Acad. Sci. Paris},
	volume={183},
	year={1926},
	pages={500--502}
}

\end{biblist}
\end{bibdiv}

%
%

%

\end{document}